\documentclass[11pt, makeidx]{amsart}

\usepackage{enumerate}
\usepackage{amssymb,amsfonts,amsthm}
\usepackage{srcltx}
\usepackage{float}
\usepackage[OT2,OT1]{fontenc}
\newcommand\cyr{%
\renewcommand\rmdefault{wncyr}%
\renewcommand\sfdefault{wncyss}%
\renewcommand\encodingdefault{OT2}%
\normalfont
\selectfont}
\DeclareTextFontCommand{\textcyr}{\cyr}
\usepackage{color}
\makeindex

\setcounter{secnumdepth}{6}

\makeatletter
\def\sssub{\@startsection{paragraph}{4}}

\renewcommand\paragraph{\@startsection{paragraph}{4}{\z@}{1.25ex}{0.0001pt}{\normalfont\normalsize\em}}



\makeatother

\numberwithin{paragraph}{subsubsection}

\newcommand{\ee}{\end{enumerate}}
\newcommand{\beq}{\begin{equation}}
\newcommand{\eeq}{\end{equation}}

\newcommand{\Z}{{\mathbb{Z}}}
\newcommand{\N}{{\mathbb{N}}}
\newcommand{\inp}{{\mathrm{inp}}}
\newcommand{\Sym}{{\mathrm{Sym}}}
\newcommand{\Add}{{\mathrm{Add}}}
\newcommand{\Sub}{{\mathrm{Sub}}}
\newcommand{\Aut}{{\mathrm{Aut}}}
\newcommand{\la}{\langle}
\newcommand{\ra}{\rangle}
\newcommand{\iv}{^{-1}}


\pagestyle{myheadings}

\newtheorem{theorem}{Theorem}[section]

\newtheorem{lemma}[theorem]{Lemma}

\theoremstyle{remark}
\newtheorem{remark}[theorem]{Remark}

\oddsidemargin=0.2in
\evensidemargin=0.2in
\textwidth=6 in
\textheight=8.5in

\begin{document}

\title{Algorithmically complex residually finite groups}
\author[O. Kharlampovich]{O. Kharlampovich$^1$}
\author[A. Myasnikov]{A. Myasnikov$^2$}
\author[M. Sapir]{M. Sapir$^{3}$}
\thanks{$^1$ Partially supported by NSF grant DMS-0700811, $^2$ Partially supported by NSF
grants DMS-0700811 and DMS-0914773. $^{3}$ Partially supported by NSF grant DMS-0700811 and by BSF (U.S.A.-Israel) grant 2010295.}

\address{Olga Kharlampovich, Department of Mathematics and Statistics, Hunter College, City University of New York, New York, NY, 10065, U.S.A.}
\email{okharlampovich@gmail.com}

\address{Alexei Myasnikov, Stevens Institute of Technology, Hoboken, NJ, 07030 U.S.A.}
\email{amiasnik@stevens.edu}

\address{Mark Sapir, Department of Mathematics, Vanderbilt University, Nashville, TN 37240,
 U.S.A.} \email{m.sapir@vanderbilt.edu}

\maketitle \begin{abstract} We construct the first examples of
algorithmically complex finitely presented residually finite groups, the
first examples of finitely presented residually finite groups with
arbitrarily high (recursive) Dehn functions, and arbitrary large depth
functions. The groups are solvable of class 3. We also prove that the
universal theory of finite solvable of class 3 groups is undecidable.
\end{abstract}

\tableofcontents

\section{Introduction}

\subsection{The problem and previous approaches for a solution}

It is well known that finitely presented residually finite groups are much simpler algorithmically than arbitrary finitely presented groups. For example, the word problem in every such group is decidable. Moreover the most ``common" residually finite groups, the linear groups over fields, are algorithmically ``tame": the word problem in any linear group is decidable in polynomial time and even log-space \cite{LZ77}. Dehn functions are important witnesses of the complexity of the word problem. Although it is well known that groups with solvable word problem can have very large Dehn functions \cite{Madlener-Otto,SBR}, no examples of finitely presented residually finite groups with superexponential Dehn  function are known. Thus one  of the main open problems in this area is how large could the Dehn function of a residually finite finitely presented group be. The question was known since early 90s. It was open for so long because all known methods to construct algorithmically hard  groups produced either non-residually finite groups or groups where the question about their residual finiteness is very difficult. Not much is known even for  linear groups (Steve Gersten asked \cite{Ger, GR} if there exists a uniform upper bound for Dehn functions of linear groups). Let us briefly discuss the previous attempts to solve the problem and the reasons why these methods did not work.

\subsubsection{Method 1. Known groups with large Dehn functions}\label{ss:kn} One could hope that some of the known finitely presented groups with very large Dehn function may turn out to be residually finite, which would shed some light on how to   produce residually finite groups with even larger Dehn functions. Unfortunately, this is not the case, all these groups are  non-residually finite. For example, the Dehn function of the one relator group $G_{(1,2)}  = \langle a,b \mid b^{-1} a^{-1} b a b^{-1} a b = a^2  \rangle$ introduced by Baumslag   in \cite{Baumslag:1969} is bigger than any iterated exponent (see Gersten \cite{G1}). Platonov \cite{P} proved that it is equivalent to the function $\exp^{(\log 2n)}(1)$, where $\exp^{(m)}(x)$ is the function defined by $\exp^{(0)}(n)=n$ and $\exp^{(k+1)}(n)=\exp(\exp^{(k)}(n)))$. However, $G_{(1,2)}$ is not residually finite (and in fact has very few finite quotients) \cite{Baumslag:1969}. Furthermore, the word problem in $G_{(1,2)}$ is in polynomial time \cite{MUW}.

\subsubsection{Method 2. Using subgroups with very large distortion}  Consider a finitely presented group $G$ and a ``badly" distorted finitely generated subgroup $H$. Let $T = \langle G,t \mid t^{-1}ht = h \  (h \in H)\rangle$ be the HNN extension of $G$ where the free letter $t$ centralizes $H$. It was noticed by Bridson and H\"afliger \cite[Theorem 6.20.III]{BH} that the Dehn function of $T$ is at least as large as  the distortion function of $H$ in $G$. The following result puts some limitations on this method of constructing complicated residually finite groups.
\begin{lemma} \label{le:constrT}
If the group $T$ is residually finite then  $H$ is closed in the pro-finite topology of $G$.
\end{lemma}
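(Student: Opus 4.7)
The plan is to use the commutator $[t,g] = t^{-1}g^{-1}tg$ as a ``test element'' that detects whether $g$ lies in $H$ or not, and then push a finite quotient of $T$ separating $[t,g]$ from $1$ down to a finite quotient of $G$ separating $g$ from $H$.

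The first step is to observe that for $g \in G$ one has $[t,g] = 1$ in $T$ if and only if $g \in H$. The ``if'' direction is built into the presentation. For the converse, if $g \notin H$, the word $t^{-1} g^{-1} t g$ is a reduced HNN word (the syllable $g^{-1}$ sandwiched between $t^{-1}$ and $t$ does not lie in the associated subgroup $H$), so by Britton's Lemma it represents a nontrivial element of $T$.

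Next, fix any $g \in G \setminus H$. By the previous step $[t,g] \neq 1$ in $T$, so by residual finiteness of $T$ there is a finite-index normal subgroup $N \lhd T$ with $[t,g] \notin N$. Intersecting with $G$ gives a finite-index normal subgroup $M := N \cap G$ of $G$, and I set $K := H M$, which is a subgroup of $G$ containing $H$ and of finite index in $G$ (since $M$ is normal in $G$). The goal is to verify $g \notin K$, which will establish that $H$ equals the intersection of the finite-index subgroups of $G$ containing it, i.e.\ that $H$ is closed in the profinite topology.

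The key computation, and the only slightly nontrivial point, is that if $g$ could be written as $g = hm$ with $h \in H$, $m \in M$, then using $t^{-1}ht = h$ one obtains
\[
[t,g] = t^{-1} m^{-1} h^{-1} t h m = t^{-1} m^{-1} t \cdot m = [t,m],
\]
and since $N$ is normal in $T$ and $m \in N$, this commutator lies in $N$, contradicting the choice of $N$. Hence $g \notin HM = K$, as required. I expect the only delicate step to be invoking Britton's Lemma correctly to conclude $[t,g] \neq 1$ for $g \notin H$; the rest is the standard normal-closure commutator trick for HNN extensions with centralized associated subgroup.
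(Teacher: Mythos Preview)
Your proof is correct and follows essentially the same approach as the paper's: both use the commutator $[t,g]$ (nontrivial precisely when $g\notin H$, by Britton's Lemma) as the test element, push through a finite quotient of $T$ separating it from $1$, and conclude that $g$ is separated from $H$ by a finite-index subgroup of $G$. The only difference is cosmetic: the paper phrases the last step as ``$\phi(u)\notin\phi(H)$ since every element of $\phi(H)$ commutes with $\phi(t)$,'' whereas you carry out the equivalent explicit computation $[t,hm]=[t,m]\in N$.
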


\proof In the notation above, suppose  $T$ is residually finite and  $u\in G\setminus H$.  Then $w=[u,t]\ne 1$ in $T$ (by the standard properties of HNN extensions). Hence  there exists a homomorphism $\phi$ from $T$ onto a finite group $T_w$ such that $\phi(w)\ne 1$. But this implies $\phi(u)\not \in \phi(H)$ (since every element of $\phi(H)$ commutes with $\phi(t)$). Hence there exists a normal subgroup of finite index $N<G$ such that $u$ does not belong to $NH$. In other words $H$ must be closed in the pro-finite topology of $G$.
\endproof

Consider the following typical examples of residually finite groups $G$ with highly distorted subgroups $H$.  The first one is Wise's version \cite{WiseRF} of Rips' construction \cite{Rips}  which for every finitely presented group $Q$ gives a finitely presented residually finite  small cancellation group $G$ with a short exact sequence $$1\to N \to G \to Q\to 1$$ where $N$ finitely generated. It is easy to see that the distortion function of $N$ in $G$ is at least as bad as the Dehn function in $Q$, so choosing $Q$ properly one can get a finitely presented residually finite group $G$ with a highly distorted subgroup $N$.  Now, the subgroup $N$ is normal in the HNN extension $T$. So it is closed in the pro-finite topology of $T$ only if $Q=G/N$ is residually finite. By Lemma \ref{le:constrT},  $T$ can be residually finite only if $Q$ is residually finite. In other words to construct a complicated finitely presented residually finite group $T$ one has to have the initial group $Q$ complicated, finitely presented and residually finite as well.

The second example is the standard Mikhailova construction. In this case    highly distorted subgroups of the direct product of two free groups $F_2\times F_2$ can be obtained as equalizers of two homomorphisms $\phi_1\colon E_1\to M$ and $\phi_2\colon E_2\to M$ where $E_1, E_2$ are finitely generated subgroups of $F_2$ and $M$ is finitely presented (see Sections \ref{ss:qn}, \ref{s:dis} below). But by Remark \ref{r:m} below the equalizer is closed in the pro-finite topology only if $M$ is residually finite. Thus as in the previous example, in order to construct an algorithmically complex residually finite finitely presented group using Mikhailova's construction and the HNN extensions as above, one  needs to have already  a finitely presented residually finite algorithmically complex group $M$.

The third example is Cohen's construction of highly distorted subgroups employing the modular machines \cite{Cohen}. One can also prove that in that construction the subgroup will be pro-finitely closed only if the modular machine is very easy.

One can also try to use the hydra groups \cite{DR, BDR} to construct HNN extensions as above with Dehn functions bigger than any prescribed Ackermann function. The question of whether these groups are residually finite is open (most probably the answer is negative because the distorted subgroup should not be closed in the pro-finite topology, but this needs a proof).

\subsubsection{Method 3. Boone-Novikov constructions}\label{ss:1} One of the standard ways to produce algorithmically complicated groups is by simulating Turing machines using free constructions (HNN extensions and amalgamated products) which goes back  to the seminal papers by Boone and Novikov (see, for example, \cite{Rotman}). There are currently many versions of that construction (for a recent survey see \cite{SapBMS}). But in fact, it can be shown that for each known version of the proof of Boone-Novikov theorem using free constructions, even for easy Turing machines the corresponding group is non-residually finite. This is, for instance, the main idea of the example in \cite{KSN2A}. Here is an even easier example. Let $G=G(M)$ be a group constructed by any of these constructions. Then for every input word $u$ of the Turing machine $M$ there exists a word $w=w(u)$ obtained by inserting some copies of $u$ in $w(\emptyset)$, so that $u$ is accepted by $M$ if and only if $w(u)=1$ in $G$. Now consider $M$ that accepts a word $a^n$ if and only if $n\ne 0$ (that machine is actually one of the basic building blocks in \cite{SBR}). Then $w(a^n)=1$ in $G$ if and only if $n\ne 0$. Suppose that there exists a homomorphism $\phi$ onto a finite group $H$ that separates $w(\emptyset)=w(a^0)$ from 1. Then $\phi(a)$ has finite order, say, $s$, in $H$. Therefore $\phi(w(\emptyset))=\phi(w(a^0))=\phi(w(a^s))=\phi(1)=1$, a contradiction. Hence $G(M)$ is not residually finite.

\subsubsection{Method 4. Residually finite groups obtained by free constructions} In general, the question about residual finiteness of free constructions is very difficult. Currently there are only two large classes of groups where the question was settled: these are ascending HNN extensions of free groups \cite{BS05} and certain groups acting ``nicely" on CAT(0)-cubical complexes including small cancelation groups (see the recent work of Wise,  for example, \cite{HW} and references therein). All these groups have easy word problem and uniformly bounded Dehn functions. The reason for the lack of more examples is that groups obtained by free constructions from ``nice" groups contain a lot of extra elements and it is not at all clear how to separate these elements from 1 by homomorphisms onto finite groups. In the two cases when it could be done, it was possible to reformulate the problem in the language of algebraic geometry and geometric topology, respectively.

\subsubsection{Method 5. Other groups with complicated word problem} There are several other constructions of groups with complicated word problem but each of these also almost always produce non-residually finite groups. For example, the group in \cite{MT} is based on the R. Thompson group $V$ which is infinite and simple (hence not residually finite).

\subsubsection{The main results of the paper.}\label{ss:main} In this paper, we construct finitely presented residually finite groups with arbitrary complex word problem, and also easy word problem but arbitrary high (of course recursive) Dehn function, and arbitrary high depth function. We also give applications of these results to the questions about solvability of the universal theory of finite solvable groups.

Our constructions are based on simulating Minsky machines in groups. Surprisingly, the algebraic structure of the group not only depends on the construction itself, but also heavily depends on the computational properties of the machines we simulate: for the group to be  residually finite the Minsky machines should be  sym-universally halting, that is their transition graphs are vertex-disjoint unions of finite trees.

\subsubsection{ What next?}  We expect the approach used in this paper to be useful in solving other problems that are still open. For example, the residually finite version of the Higman embedding theorem would be very desirable. It is known \cite{Meskin, Dyson,Grig,KhoussM} that a finitely generated recursively presented residually finite group may have undecidable word problem, and hence cannot be embedded into a finitely presented residually finite group. But it is not known whether every unsolvability of the word problem is the only obstacle for such an embedding. Thus it would be very interesting to find out whether every finitely generated residually finite group with solvable word problem embeds into a finitely presented residually finite group. Note that usually a version of Boone-Novikov theorem precedes a version of Higman theorem, hence we can consider this paper as a step toward the residually finite version of Higman's theorem.

\subsection{The ``yes" and ``no" parts of the McKinsey algorithm}\label{s:yn}

One of the initial motivations for studying residually finite groups, semigroups and other algebraic structures
was McKinsey's algorithm solving the word problem in finitely presented residually finite algebraic structures.
Even though the algorithm is well known and classical,  surprisingly little is known about its complexity.
In this paper, we fill this gap.

Let $G=\langle X ; R \rangle$ be a residually finite finitely presented algebraic structure of finite type
(signature) $T$ (say, groups, semigroups, rings, etc.)  Let us recall McKinsey's algorithm solving the word problem in $G$
(see \cite{McKinsey}, \cite{Malcev}). The word problem is divided into two parts. Let $F(X)$ be the free
algebraic structure of type $T$ freely generated by $X$. Then we define the ``yes" and ``no" parts of the word problem in $G$ as follows:

$$\mathrm{WP}_{\mathrm{yes}} = \{(w,w') \in F(X) \mid w =_Gw'\} \ \ \text{and} \ \
   \mathrm{WP}_{\mathrm{no}} = \{(w,w') \in F(X) \mid w \neq _G w'\}.$$
To solve the word problem  in $G$ one runs in parallel  two separate algorithms ${\mathcal A}_{\mathrm{yes}}$ and
${\mathcal A}_{\mathrm{no}}$, such that starting with a given pair of elements $w, w' \in F(X)$ ${\mathcal
A}_{\mathrm{yes}}$ stops if and only if $(w,w') \in \mathrm{WP}_{\mathrm{yes}}$ and ${\mathcal A}_{\mathrm{no}}$ stops if
and only if $(w,w') \in \mathrm{WP}_{\mathrm{no}}$.

{\bf The algorithm ${\mathcal A}_{\mathrm{yes}}$}  enumerates one by one all consequences of the defining
relations $R$  and waits until $w=w'$ appears in the list.

{\bf The algorithm ${\mathcal A}_{\mathrm{no}}$}   enumerates all homomorphisms $\phi_1, \phi_2, \ldots,$ of $G$
into finite algebraic structures of type $T$ and waits until  $\phi_i(w) \neq \phi_i(w')$.

Let now $G$ be a finitely presented residually finite group. Although it seems like in general ${\mathcal
A}_{\mathrm{yes}}$ and ${\mathcal A}_{\mathrm{no}}$ are very slow, there were no examples of groups $G$ for which
these algorithms were actually very slow. More precisely, there were no known examples of finitely presented
residually finite groups with very hard ``yes" or ``no" part of the word problem. Indeed, the most ``common"
residually finite groups are linear groups, say, over fields \cite{Malcev}. In that case it is well
known that the ``yes" part can be solved in deterministic polynomial time \cite{LZ77,Waak}. The ``no" part can be
solved by considering factor groups corresponding to ideals of finite index of some polynomial rings, hence also can be shown to be
solvable in deterministic polynomial time. In fact the same can be said about most finitely presented groups (where ``most" means ``with overwhelming probability" in one of several probabilistic models): recent results of Agol \cite{Agol} and Ollivier and Wise \cite{OW} together with the older result of Olshanskii \cite{OlRand} imply that most finitely presented groups  are linear (even over $\Z$).

One of our main results is the following theorem (an immediate corollary of Theorem \ref{t:rhog} below):

\medskip
\noindent {\bf Theorem.} {\em Let $f(n)$ be a recursive function. Then there exists a residually finite finitely presented solvable
group $G$ such that for any finite presentation $\langle X ; R \rangle$ of $G$ the time complexity of both ``yes" and ``no" parts of the word problem are at least
as high as $f(n)$.}

\medskip
We also show that both algorithms ${\mathcal A}_{\mathrm{yes}}$ and ${\mathcal A}_{\mathrm{no}}$ can be very slow
even when both ``yes" and ``no" parts of the word problem are easy.

\begin{remark}\label{r:ele} Note that if we replace ``finitely presented" assumption by ``recursively presented", then residually finite groups are known to be very complicated. As we have mentioned before, recursively presented finitely generated residually finite groups may have undecidable word problem \cite{Meskin, Dyson, Grig}). Even more, recently the second author and B. Khoussainov constructed residually finite Dehn monsters, i.e.,  infinite groups which are  recursively presented, residually finite and algorithmically finite  \cite{KhoussM}. These are groups where the word problem is not only undecidable, but one cannot algorithmically enumerate  an infinite set of pair-wise distinct elements of the group.

Note also that although our groups are not linear they are (elementary Abelian)-by-linear since they are  solvable of class 3 with the  second derived subgroup elementary Abelian.
\end{remark}

\subsection{Quantification of the ``yes" part: the Dehn function}

It was noticed by Madlener and Otto \cite{Madlener-Otto} that the Dehn function of a group measures the complexity of the word problem. They also constructed finitely presented groups with arbitrary large Dehn functions. For residually finite groups, the situation is different.
Nilpotent groups are
examples of residually finite groups with arbitrary high polynomial Dehn function \cite{BMS93}. The
Baumslag-Solitar groups $\la x,y\mid x^y=x^k\ra$, $k\ge 2$, are examples of residually finite (even linear)
groups with exponential Dehn function. No examples of residually finite groups with bigger Dehn functions were
known. This gap is filled by the following

\medskip

\noindent {\bf Theorem \ref{t:rfg}.} {\em For every recursive function $f$, there is a  residually finite finitely presented
solvable of class 3 group $G$ with Dehn function greater than $f$. In addition, one can assume that the word problem in $G$ is at least as hard as any given recursive function or as easy as polynomial time.}

\medskip
As a corollary of Theorem 4.18 we mention the following exotic examples of groups.

\medskip \noindent
{\bf Corollary.} {\em   For every recursive function $f$, there is a  residually finite finitely presented
solvable of class 3 group $G$ with Dehn function greater than $f$ and the word problem decidable in polynomial time.
}
\medskip

\subsection{Quantification of the ``no" part: the depth function}\label{ss:qn}

The function quantifying the algorithm ${\mathcal A}_{\mathrm{yes}}$ is the {\em depth function} introduced by
Bou-Rabee \cite{Bou-Rabee}. Recall that if $G=\la X\ra$ is a finitely generated group or semigroup, the depth
function $\rho_G(n)$ is the smallest function such that every two words $w\ne_G w'$ of length at most $n$ are
separated by a homomorphism to a group (semigroup) $H$ with $|H|\le \rho_G(n)$. That function does not depend on
the choice of finite generating set $X$ (up to the natural equivalence).

It is easy to see that for every finitely generated linear group or semigroup $G$, $\rho_G$ is at most
polynomial. Since finitely generated metabelian groups are subgroups of direct products of linear groups
\cite{W} the depth function of every finitely generated metabelian group is at most polynomial. By the recent
result of Agol \cite{Agol} based on the earlier results of Wise \cite{Wise}, every small cancelation group is a
subgroup of a Right Angled Artin group, hence linear and has polynomial depth function. In fact one can have much
smaller bounds for many linear groups. For example, for the free group $F_2$, $\rho_{F_2}(n)$ is at most
$n^{\frac{2}{3}}$ by a result of Kassabov and Matucci \cite{KM}. There are some finitely presented groups for
which the depth function is unknown and very interesting. For example the ascending HNN extensions of free groups
are known to be residually finite and even virtually residually nilpotent (proved by A. Borisov and the third author \cite{BS05, BS09}) but the only upper bound one can
deduce from the proof is exponential. Although many of these groups have small cancelation presentations
and so covered by the results from \cite{Agol}, there are some groups of this kind for which the depth function is not known.
One of these groups is $\langle x,y,t\mid txt\iv = xy, tyt\iv=yx\rangle$. The fact that it is hyperbolic follows
from Bestvina-Feign combination theorem \cite{Bestvina-Feighn} and was proved by Minasyan (unpublished). If the
depth function of that group is not polynomial, that group would not be linear, disproving a conjecture by Wise
(he conjectured that all hyperbolic ascending HNN extensions of free groups are linear and, moreover, subgroups
of Right Angled Artin groups).

For finitely generated infinitely presented groups (even amenable ones) the situation is much more clear now.
Using the method of Kassabov and Nikolov \cite{Kassabov-Nikolov} and the result of Nikolov and Segal
\cite{Nikolov-Segal} one can construct a finitely generated residually finite group with arbitrary large
recursive depth function.

In this paper, we show that a similar result holds for finitely presented solvable of class 3 groups.

\medskip

\noindent {\bf Theorem \ref{t:rhog}.} {\em For every recursive function $f$, there is a residually finite finitely presented
solvable of class 3 group $G$ with depth function greater than $f$. In addition, one can assume that the word problem in $G$ is at least as hard as the membership problem in a given recursive set of natural numbers $Z$ or as easy as polynomial time.}

\medskip
As a corollary of Theorem 4.20 we mention the following exotic examples of groups.

\medskip \noindent
{\bf Corollary.} {\em   For every recursive function $f$, there is a  residually finite finitely presented
solvable of class 3 group $G$ with depth  function greater than $f$ and the word problem decidable in polynomial time.
}
\medskip

\subsection{Distortion of pro-finitely closed subgroups of finitely presented groups}

Let $G$ be a group generated by a finite set $X$, $H\le G$ be a subgroup generated by a finite set $Y$. Recall
that the distortion function $f_{H,G}(n)$ is defined as the minimal number $f$ such that every element of $H$
represented as a word $w$ of length $\le n$ in the alphabet $X$ can be represented as a word of length $\le f$ in
the alphabet $Y$ \cite{Farb}. It is clear \cite{Farb} that the distortion function $f_{G,H}$ is recursive if and
only if the membership problem in $H$ is decidable.


Recall that $H$ is {\em closed in the pro-finite  topology} of $G$ if $H$ is the intersection of
some subgroups of $G$ of finite index. If $G$ is finitely presented and $H$ is closed in the pro-finite topology of
$G$, then there exists a McKinsey-type algorithm $A(G,H)$ solving the membership problem for $H$ (and thus the
$f_{G,H}$ is recursive). For every word $w$ in the alphabet $X$, the ``yes" part $A_{\mathrm{yes}}(G,H)$ of the
algorithm lists all words in $Y$, rewrites them as words in $X$, and then applies relations of $G$ to check
whether one of these words is equal to $w$.  The ``no" part $A_{\mathrm{no}}(G,H)$ of the algorithm lists all
homomorphisms $\phi$ of $G$ into finite groups and checks whether $\phi(w)\not\in\phi(H)$. As in Section
\ref{s:yn}, one can asks what is the complexity of the ``yes" and ``no" parts of that algorithm, in particular,
and of the membership problem for $H$ in general.

One can also quantify the complexity of the two parts $A_{\mathrm{yes}}(G,H)$ and $A_{\mathrm{no}}(G,H)$. The
``yes" part is quantified by the distortion function $f_{G,H}(n)$ and the ``no" part is quantified by the {\em
relative depth function} $\rho_{G,H}(n)$ which is defined as the minimal number $r$ such that for every word $w$
of length $\le n$ in $X$ which does not represent an element of $H$ there exists a homomorphism $\phi$ from $G$
to a finite group of order $\le r$ such that $\phi(w)\not\in \phi(H)$.

As for the word problem in residually finite finitely presented groups (discussed above), there were no examples
of finitely generated subgroups of finitely presented groups that are closed in the pro-finite topology but have
``arbitrary bad" distortion or ``arbitrary bad" relative depth function.

The Mikhailova's construction  shows that finitely generated subgroups of the
residually finite group $F_2 \times F_2$  (here $F_2$ is a free group of rank 2) could be as distorted as one
pleases. In fact the set of possible distortion functions of subgroups of $F_2\times F_2$ coincides, up to a
natural equivalence, with the set of Dehn functions of finitely presented groups \cite{Olshanskii_Sapir:2001}. By
a result of Baumslag and Roseblade
\cite{Baumslag-Roseblade} subgroups of $F_2\times F_2$ are {\em equalizers} of pairs of homomorphisms $\phi\colon
F_k\to G, \psi\colon F_n\to G$ (where $F_k, F_n$ are subgroups of $F_2$), i.e. the subgroups of the form
$\{(x,y)\in F_k\times F_n\mid \phi(x)=\psi(y)\}$. The equalizer subgroup is finitely generated if and only if $G$
is finitely presented. It is easy to prove (see Lemma \ref{l:89} below) that if $G$ is residually finite, then
the equalizer is closed in the pro-finite topology of $F_2\times F_2$.  Thus we can use the examples of residually
finite finitely presented groups with complicated word problem and complicated depth function to prove the
following

\medskip

\noindent {\bf Theorem \ref{t:distortion}} {\em For every recursive function $f(n)$ there exists a finitely generated subgroup $H \leq F_2\times F_2$ that is
closed in the pro-finite topology of $F_2\times F_2$ and whose distortion function $f_{F_2\times F_2,H}$, the relative depth function, and the
time complexities of both ``yes" and ``no" parts of the membership problem are at least $f(n)$.}

\medskip
There is an analogous (though a bit weaker) result,  for subgroups of a direct product $S_3(X) \times S_3(X)$, where $S_3(X)$ is a free solvable group of class 3 with free generating set  $X$.

\medskip
\noindent {\bf Theorem \ref{t:distS}} {\em For any recursive function $f(n)$ there is a finite set $X$ and a finitely generated  subgroup $H \in S_3(X)
\times S_3(X)$ such that $E$ is closed in the pro-finite topology on $S_3(X) \times S_3(X)$ and
the distortion function, the relative depth function, and the
time complexities of both ``yes" and ``no" parts of the membership problem in $H$ are at least $f(n)$.
}

\subsection{Methods of proof}\label{ss:mop} As we have shown above (see Section \ref{ss:1}) most versions of the Boone-Novikov construction (\cite{Rotman, MT, Cohen, SBR}) do not produce residually finite groups. Instead, we simulate Minsky machines in solvable groups of class 3. A  similar construction is due to the first author \cite{Kh81}. We use a version from the unpublished thesis \cite{Kh90}. As in \cite{Kh81,Kh90}, our group is a split extension of an elementary Abelian group of prime exponent by a metabelian group. Since every metabelian group has easy word problem and is residually finite, we can concentrate only on the elementary Abelian subgroup which is spanned, basically, by the configurations of the Minsky machine encoded in a certain way. That encoding is a very important feature of the construction. It helps us avoid the problem from Section \ref{ss:1} because the words $w(u)$ corresponding to the input configurations $u$ of the Minsky machines are not obtained by inserting copies of $u$ in $w(\emptyset)$.

Of course that construction also often leads to non-residually finite groups. But it turned out that the difficulty can be overcome by modifying the Minsky machine first. In this paper, we use the fact that every Turing machine and every Minsky machine with decidable halting problem is equivalent to a {\em universally halting} and even {\em sym-universally halting} machine (these machines can be characterized as the machines whose transition graphs are vertex disjoint unions of finite trees).

\subsection{Structure of the paper} The paper is organized as follows. Section \ref{s:1} contains preliminary
results about Turing and Minsky machines that are needed further. We show that one can modify any Turing or
Minsky machine that recognizes a recursive set into a machine that halts on every configuration. In fact we can
even assume that the symmetrized machine always halts (we call such machines sym-universally halting).

In Section \ref{s:2}, we simulate sym-universally halting Minsky machines in residually finite finitely presented
semigroups and prove the analogs of the above theorems for semigroups.

In Section \ref{s:4} we simulate Minsky machines in solvable groups and construct complicated residually finite finitely presented groups.

Sections \ref{s:dis} and \ref{s:6} contain applications of the main theorems.  In Section \ref{s:dis} we prove,
in particular, Theorem \ref{t:distortion}. In Section \ref{s:6}, we strengthen the well-known result of
Slobodskoi about undecidability of the universal theory of finite groups. We show, in particular, that the
universal theory of any set of finite groups that contains all finite solvable groups of class 3 is undecidable.

{\bf Acknowledgement.} The authors are grateful to Jean-Camille Birget and Friedrich Otto for pointing to the references \cite{MDavis}, to Ben Steinberg for pointing to the reference \cite{LZ77},to Rostislav Grigorchuk for pointing to the references \cite{Dyson,Grig} and to Tim Riley for pointing to the references \cite{Ger,GR}. We are also grateful to Markus Lohrey and Ralph Strebel for their comments.

\section{Turing machines and Minsky machines}\label{s:1}

\subsection{Turing machines} Let us give a definition of a Turing machine. A Turing machine $M$ with $K$ tapes
consists of hardware (the tape alphabet $A=\sqcup_{i=1}^k A_i$, and the state alphabet $Q=\sqcup_{i=1}^K
Q_i$\footnote{$\sqcup$ denotes disjoint union}) and program $P$ (the list of commands, defined below). A {\em
configuration} of a Turing machine $M$ is a word $$\alpha_1 u_1q_1v_1\omega_1\hbox{  } \alpha_2u_2q_2v_2\omega_2\hbox{  }
\ldots\hbox{  } \alpha_Ku_Kq_Kv_K\omega_K$$ (we included spaces to make the word more readable) where $u_i, v_i$ are
words in $A_i$, $q_i\in Q_i$ and $\alpha_i,\omega_i$ are special symbols (not from $A\cup Q$).

A command simultaneously replaces subwords $a_iq_ib_i$ by words $a_i'q_i'b_i'$ where $a_i,a_i',$ are either
letters from $A_i\cup\{\alpha_i\}$ or empty, $b_i,b_i'$ are either letters from $A_i\cup \{\omega_i\}$ or empty.
A command cannot insert or erase $\alpha_i$ or $\omega_i$, so if, say, $a_i=\alpha_i$, then $a_i'=\alpha_i$. Note
that with every command $\theta$ one can consider the {\em inverse} command $\theta\iv$ which undoes what
$\theta$ does.

A {\em computation} of $M$ is a sequence of configurations and commands from $P$:

$$w_1 \stackrel{\theta_1}{\longrightarrow} w_2 \stackrel{\theta_2}{\longrightarrow} \ldots
\stackrel{\theta_l}{\longrightarrow} w_{l+1}.$$

Here $l$ is called the {\em length} of the computation. We choose {\em stop states} $q_i^0$ in each $Q_i$, then
we can call a configuration $w$ {\em accepted} if there exists a computation starting with $w$ and ending with a
configuration where all state symbols are $q_i^0$ and all tapes are empty. Also we choose {\em start states}
$q_i^1$ in each $Q_i$. Then an {\em input} configuration corresponding to a word $u$ over $A_1$ is a
configuration $\inp(u)$ of the  form $$\alpha_1uq_1^1\omega_1\hbox{  } \alpha_2q_2^1\omega_2\hbox{  } \ldots\hbox{  } \alpha_Kq_K^1\omega_K.$$
We say that a word $u$ over $A_1$ is accepted by $M$ if the configuration $\inp(u)$ is accepted. The set of all
words accepted by $M$ is called the {\em language accepted by} $M$.

The {\em time function} $T_M(n)$ of $M$ is the minimal function such that every accepted word of length $\le n$
has an accepting computation of length $\le T_M(n)$. The {\em space function} $S_M(n)$ of $M$ is the minimal
function such that every accepted word of length $\le n$ has an accepting computation where every configuration
has length $\le S_M(n)$.

A Turing machine $M$ is called {\em deterministic} if for every configuration, there exists at most one command
from the program $P$ that applies to this configuration.

In this paper, we shall consider several types of machines. A machine $M$ in general has an alphabet and a set of
words in that alphabet called {\em configurations}. It also has a finite set of commands. Each command is a
partial injective transformation of the set of configurations. A computation is a sequence

$$w_1 \stackrel{\theta_1}{\longrightarrow} w_2 \stackrel{\theta_2}{\longrightarrow} \ldots
\stackrel{\theta_l}{\longrightarrow} w_{l+1}.$$
where $w_j$ are configurations, $\theta_1,...,\theta_n$ are commands and $\theta_i(w_i)=w_{i+1}$ for every
$i=1,...,n$. A machine is called {\em deterministic} if the domains of its commands are disjoint. A machine
usually has a distinguished {\em stop} configuration, and a set $I=I(M)$ of {\em input} configurations. A
configuration is called {\em accepted} by $M$ if there exists a computation connecting that configuration with
the stop configuration. The machine $\Sym(M)$ is defined in the natural way (add the inverses of all commands of
$M$). Two configurations $w$, $w'$ are called {\em equivalent}, written $w\equiv_M w'$, if there exists a
computation of $\Sym(M)$ connecting these configurations. Clearly, $\equiv_M$ is an equivalence relation.

The following general lemma is easy but useful.

\begin{lemma}\label{l:genmachine}\label{l:sym} Suppose that $M$ is deterministic. Then two configurations $w,w'$
of $M$ are equivalent if and only if there exists two computations of $M$ connecting $w,w'$ with the same
configuration $w''$ of $M$.
\end{lemma}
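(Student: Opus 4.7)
The plan is to prove the two directions separately, with the forward direction essentially trivial and the reverse direction done by a standard ``peak-reduction'' argument that exploits determinism.

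For the easy direction, suppose there are $M$-computations $w=u_0\to u_1\to\cdots\to u_k=w''$ and $w'=v_0\to v_1\to\cdots\to v_m=w''$. Reversing the second and inverting each command turns it into a $\Sym(M)$-computation $w''\to v_{m-1}\to\cdots\to v_0=w'$. Concatenating with the first produces a $\Sym(M)$-computation from $w$ to $w'$, so $w\equiv_M w'$.

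For the nontrivial direction, I would start from an arbitrary $\Sym(M)$-computation $w=w_0\to w_1\to\cdots\to w_n=w'$ in which each transition is given by some command $\theta_i$ or its inverse $\theta_i\iv$ for some $M$-command $\theta_i$, and show that it can be rewritten so that all forward steps precede all backward steps. Call an index $i$ a \emph{valley} if step $i$ is of the form $\theta\iv$ and step $i+1$ is of the form $\eta$ (both $M$-commands, not inverses). I would induct on the number of valleys. If it is zero, then the computation has the shape $w\to\cdots\to w''\to\cdots\to w'$ with all forward steps first and all inverse steps second; setting $w''$ equal to the configuration at the switch-over point gives two forward $M$-computations ending at the same $w''$, as desired.

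To reduce the number of valleys, pick any valley $i$. Then $w_{i+1}$ lies in the domain of both $\theta$ and $\eta$, and since $M$ is deterministic these domains are disjoint unless $\theta=\eta$; hence $\theta=\eta$, and injectivity of $\theta$ forces $w_{i+2}=\theta(w_{i+1})=w_i$. The two steps $\theta\iv,\eta$ therefore cancel, and deleting them yields a shorter $\Sym(M)$-computation between the same endpoints with strictly fewer valleys. The main (and essentially only) subtlety is just being careful that a cancellation does not accidentally create a new valley elsewhere; but because we only shorten the word and leave the other commands unchanged in position relative to each other, the number of valleys can only go down or stay the same minus the deleted one, so the induction terminates. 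This yields the desired pair of forward $M$-computations ending at a common configuration $w''$.
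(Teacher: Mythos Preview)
Your argument is correct and takes essentially the same route as the paper: the key observation is that determinism forces any ``inverse followed by forward'' step in a $\Sym(M)$-computation to involve the same command and hence cancel, so after reduction the computation is a forward segment followed by an inverse segment, with the switching point serving as $w''$. One minor correction: your induction on the number of valleys is not quite clean, since cancelling the valley at position $i$ can create a new valley between the (now adjacent) positions $i-1$ and $i+2$; simply induct on the length of the computation instead, which strictly decreases with each cancellation.
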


\proof Indeed, since $M$ is deterministic, in any computation of $\Sym(M)$ where no command is followed by its
inverse inverses of command of $M$ cannot be followed by commands of $M$. Thus the computation is a concatenation
of two (possibly empty) parts: the first part uses only commands of $M$, the second part uses only inverses of
commands of $M$.
\endproof

We say that a set $X$ of natural numbers is {\em enumerated} by a machine $M$ if there exists a recursive
encoding $\mu$ of natural numbers by input configurations of $M$ such that a number $u$ belongs to $X$ if and
only if $\mu(u)$ is accepted by $M$. The set $X$ is {\em recognized} by $M$ if $M$ enumerates $X$ and for every
input configuration every computation starting with that configuration eventually halts (arrives to a
configuration to which no command of $M$ is applicable).

We say that machine $M'$ {\em polynomially reduces} to a machine $M$ if there exists an polynomial time algorithm
$A$ checking equivalence of configurations of $M'$ which uses an oracle checking equivalence of configurations of
$M$ such that

\begin{itemize}
\item Any computation of $A$ verifying equivalence of configurations $c,c'$ of $M'$ involves at most
    polynomial (in terms of $|c|+|c'|$ ) number of uses of the oracle,
\item and every time the sizes of the configurations of $M$ whose equivalence the oracle should check are
    polynomially bounded in terms of $|c|+|c'|$.
\end{itemize}

We say that $M$ and $M'$ are {\em polynomially equivalent} if there are polynomial reductions of $M$ to $M'$ and
vice versa.

\subsection{Universally halting Turing machines} A (not necessarily deterministic) machine $M$ is called
{\em universally halting} if for every configuration $w$ of $M$ there exist only finitely many computations of
$M$ starting with $w$ without repeated configurations.

We call a deterministic machine $M$ sym-universally halting if $\Sym(M)$ halts if it starts with any non-accepted
configuration.

\begin{theorem}[See, for example, \cite{MDavis}]\label{??} For every recursive set $X$ of natural numbers, that is accepted by a
deterministic Turing machine $M$ there exists a
universally halting deterministic Turing machine $M'$ with one tape accepting $X$ and polynomially equivalent to
$M$.
\end{theorem}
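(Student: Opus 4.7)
The plan is to build $M'$ so that it simulates $M$ step by step while maintaining a verifiable record of the entire computation history on its tape. Universal halting will then follow because, from any starting configuration, $M'$ either immediately detects that the tape does not encode a genuine $M$-computation and halts, or else the tape really does encode a prefix of a valid $M$-computation and must terminate, since $X$ is recursive.

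First, since $X$ is recursive, I would replace $M$ at the outset by a deterministic \emph{decider} for $X$---a deterministic one-tape Turing machine that halts on every input and accepts exactly the members of $X$---so that its time function $T_M(n)$ is a well-defined total recursive function. This replacement is a polynomial-time translation, hence preserves polynomial equivalence. Then I define $M'$ as a one-tape deterministic Turing machine whose tape alphabet extends that of $M$ by two fresh separator symbols chosen outside all alphabets of $M$. At every moment, the tape of $M'$ holds a word encoding a sequence $c_0, c_1, \ldots, c_k$ of $M$-configurations separated by the first new symbol, followed by a copy of the second new symbol, followed by one more configuration $d$. The prefix $c_0, \ldots, c_k$ plays the role of a \emph{trace}, and $d$ is the current configuration being simulated. $M'$ is launched on an input $u$ in the initial configuration $\inp(u)$ with empty trace.

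The program of $M'$ is a main loop consisting of three sub-routines, each implemented by a bounded number of monotone left-to-right or right-to-left sweeps between separators: (i) \emph{Verification} checks that the tape is syntactically of the prescribed shape, that $c_0$ is a legal input configuration $\inp(u)$ of $M$ for some $u$, that $c_{i+1}$ is the unique $M$-successor of $c_i$ for every $i$, and that $d$ is the $M$-successor of $c_k$; if any check fails, $M'$ halts. (ii) \emph{Stop check} halts $M'$ (with acceptance or rejection) if $d$ is a stop configuration of $M$. (iii) \emph{Update} appends $d$ to the trace and overwrites $d$ by $M(d)$. Universal halting is then immediate: starting $M'$ from an arbitrary configuration $c^{\ast}$, the first verification either fails---in which case $M'$ halts within one iteration---or succeeds, in which case $c^{\ast}$ encodes a genuine prefix of the $M$-computation from some $\inp(u)$; since $M$ decides $X$, this computation has length at most $T_M(|u|)$, so $M'$ halts after at most $T_M(|u|)$ further iterations. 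Correctness ($M'$ accepts $u$ iff $u \in X$) is clear from the step-by-step simulation, and polynomial equivalence follows because each iteration of $M'$ runs in time polynomial in the current tape length, which is bounded by $O(T_M(|u|) \cdot S_M(|u|))$, so the total running time of $M'$---and with it the translation between the equivalence-of-configurations problems of $M$ and $M'$---is polynomial in $T_M$.

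The main obstacle is implementing the verification and update sub-routines by \emph{structurally} halting procedures, so that every sub-computation---not only the outer loop---halts on every possible tape content, regardless of where within a sub-routine the starting configuration happens to lie. This is achieved by organising each sub-routine as a finite sequence of sweeps, each of which advances the head monotonically until it meets a separator or a blank, exploiting the fact that only finitely many cells are ever non-blank. Once this design discipline is in place, there is no room for $M'$ to loop inside any sub-routine, and the outer loop is controlled by the verification mechanism as described above.
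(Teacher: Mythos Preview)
The paper does not actually prove this theorem; it is stated with a bare citation to Davis and no accompanying argument. So there is nothing in the paper against which to compare your construction.

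That said, your approach---maintain the full $M$-computation trace on the tape and re-verify it at every iteration---is a standard and essentially correct way to obtain a universally halting decider for $X$. The universal-halting argument is sound once each sub-routine is built from monotone sweeps as you outline: from any starting configuration the current sweep terminates, verification then either rejects (halt) or certifies a genuine prefix of an $M$-computation from some $\inp(u)$, and the decider $M$ halts on $u$.

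Two points deserve tightening. First, in the paper's framework ``accepting'' means reaching the unique stop configuration (stop states, empty tapes); your ``halts with acceptance'' leaves the entire trace on the tape, so you need a cleanup phase after $d$ becomes the accept configuration of $M$. This is easy to add and remains structurally halting via monotone sweeps. Second, and more substantively, your justification of polynomial equivalence is too loose: you argue that the running time of $M'$ on input $u$ is polynomial in $T_M(|u|)$, but the paper's notion of polynomial equivalence concerns checking $c\equiv_{M'}c'$ for \emph{arbitrary} configurations $c,c'$ in time polynomial in $|c|+|c'|$ with an $\equiv_M$-oracle. This does hold for your $M'$---from any $c$ one can in $O(|c|)$ steps either see that verification will fail (so the forward orbit of $c$ has size $O(|c|)$ and can be listed) or extract the underlying input $u_c$; two verification-passing configurations are then $\equiv_{M'}$-equivalent iff $u_c=u_{c'}$ or both $u_c,u_{c'}$ are accepted by $M$ (one oracle call each)---but this reduction must be spelled out rather than asserted as a byproduct of the running-time bound.
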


\begin{lemma}\label{l1}   Let $M$ be  a deterministic sym-universally halting Turing machine. Then there exists a
one-tape deterministic sym-universally halting Turing machine $M'$ recognizing the same language as $M$. The
machine $M'$ is polynomially equivalent to $M$.
\end{lemma}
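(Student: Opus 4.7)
The plan is to follow the classical single-tape simulation of a multi-tape deterministic Turing machine, taking additional care so that the sym-universal halting property is preserved. First I would encode the $K$ tapes of $M$ as $2K$ interleaved tracks on a single tape of $M'$ (content tracks for each $A_i$ together with marker tracks indicating the positions of the heads); the states of $M'$ carry the states of $M$ together with a phase counter and a buffer holding the $K$ symbols currently scanned. A single step of $M$ would be simulated by a deterministic macro step of $M'$: sweep right to record all head symbols into the state buffer, return to the leftmost marker, sweep right again to execute the $M$-transition (rewriting symbols and moving head markers), and finally return to a canonical checkpoint state. The input and stop configurations of $M'$ are the natural one-tape encodings of those of $M$. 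It is routine to verify that $M'$ is deterministic, recognizes the same language as $M$, and is polynomially equivalent to $M$, since each macro step takes $O(n)$ micro-steps on a tape of length $n$.

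The delicate point is that the micro-commands must be reversible inside a macro step. To achieve this I would build the phase counter into the state and arrange that each forward micro-command both reads and writes the phase counter, incrementing it by one (modulo the macro-step length), and that the pattern matched by each micro-command is disjoint from the patterns matched by every other. Under these conditions, each non-checkpoint configuration of $M'$ admits exactly one forward micro-command and exactly one backward micro-command (the inverse of the uniquely determined command that produced it), while malformed configurations admit none. Consequently, inside a single macro step a $\Sym(M')$-computation without repeated configurations can only run monotonically through the phases, terminating at the next checkpoint in one direction or the other.

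To finish, let $c'$ be any non-accepted configuration of $M'$ and consider an arbitrary $\Sym(M')$-computation from $c'$ without repeated configurations. If $c'$ is malformed the computation is empty, so assume $c'$ is well-formed. By the previous paragraph the computation decomposes as an alternating sequence of completed forward and backward macro steps between successive checkpoints (with at most an initial and a terminal partial macro step). Projecting each checkpoint of $M'$ to the corresponding configuration of $M$ yields a $\Sym(M)$-computation from a non-accepted configuration of $M$ without repeated configurations. Since $M$ is sym-universally halting this projected computation is finite, whence so is the $\Sym(M')$-computation, and $M'$ is sym-universally halting as required.

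The main obstacle is the reversibility of micro-steps established in the second paragraph: if two distinct inverse micro-commands could apply to the same intermediate configuration, $\Sym(M')$ could branch or oscillate without advancing between checkpoints and the projection argument would fail. Handling this is what forces us to incorporate a phase counter and disjoint pattern constraints into the design of $M'$, and it is the only point where the proof goes beyond the classical single-tape reduction.
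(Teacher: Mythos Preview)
Your proposal is correct and follows the same underlying approach as the paper: reduce the multi-tape machine to a single-tape machine via the classical track-interleaving simulation and then argue that the sym-universal halting property survives. The paper's own proof is a single sentence, ``The proof is by inspection of the proof from \cite{CJ1}'' (Garey--Johnson), so you have supplied the details the paper omits.

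The one genuine addition in your write-up is the explicit phase counter making each micro-command uniquely invertible inside a macro step. The paper does not mention this; it simply asserts that the standard construction works on inspection. Your observation that, without such care, $\Sym(M')$ could in principle branch at intermediate micro-configurations is well taken, and building the phase into the state is a clean way to rule this out. So your argument is not a different route so much as a more careful execution of the route the paper points to. One small point to tighten: your claim that ``malformed configurations admit none'' of the micro-commands is not automatic for the naive track simulation (a tape with, say, two head markers on one track may still match some local pattern), so you should say explicitly that the micro-commands are designed to verify local well-formedness of the scanned window and abort otherwise; with that stipulation your projection argument goes through.
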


\proof The proof is by inspection of the proof from \cite{CJ1}.\endproof

\begin{theorem} \label{t:sym} For every recursive set of natural numbers  $X$ there exists a sym-universally
halting Turing
machine $M''$ with one tape that recognizes $X$. The machine $M''$ satisfies the following conditions.

(a) For every configuration $c$ of $M''$ either $c$ is equivalent to an input configuration or every computation
of $\Sym(M'')$  starting with $c$ has length at most $O(|c|)$.

(b) If $c,c'$ are two distinct input configurations of $M''$ such that $c\equiv_{M''} c'$. Then either $c=c'$ or
both $c,c'$ are accepted by $M''$.

(c) If $M$ is any Turing machine recognizing $X$ then we can assume that $M''$ polynomially reduces to $M$.
\end{theorem}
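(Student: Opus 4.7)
The plan is to reduce to the setting of Lemma~\ref{l1}: let $M'$ be a one-tape deterministic sym-universally halting Turing machine recognizing $X$ that polynomially reduces to $M$. I would build $M''$ by wrapping $M'$ in two layers: an input-preserving \emph{backup} and a step-logging \emph{history}. Concretely, the tape of $M''$ is divided by two separator markers into three regions: a backup region (left) holding an immutable copy of the input, a working region (middle) on which $M'$ is simulated, and a history region (right) to which a new symbol is appended at every simulation step, encoding which command of $M'$ was applied. Each command of $M''$ is defined as a partial injection, and every forward command is paired with an explicit inverse that reads the last history entry and undoes the corresponding $M'$-step. A configuration of $M''$ is accepted when the simulation reaches the stop state of $M'$ with the prescribed tape pattern.

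Property (b) in its strongest form comes from the fact that $M''$ is reversible (each configuration has at most one $M''$-predecessor, determined by the last history entry) and that input configurations carry an empty history. By Lemma~\ref{l:sym}, two equivalent input configurations would have a common $M''$-descendant, and reversibility then forces the two computations from the inputs to coincide step by step, hence the inputs themselves to coincide. Property (c) is immediate: the augmentations are polynomial-time, so the equivalence problem on $M''$ reduces to that of $M'$ by stripping the backup and history and feeding the resulting $M'$-configurations to the oracle for $M'$.

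Property (a) is where the real work lies. I would require each simulation command to carry local syntactic gates: the separator markers must be in the correct positions, the working region must contain exactly one state symbol with the backup state-free and the history formed from admissible log symbols, and the new history symbol to be appended must be locally consistent with the state transition it records. With these gates in place, a configuration $c$ not equivalent to any input must fail some gate after at most a bounded number of forward or backward $M''$-steps: otherwise reversibility together with the gates would let us reconstruct from $c$ and its history a legitimate computation from an actual input, contradicting non-input-equivalence. The backward orbit length is bounded by the history length, which is at most $|c|$, and the forward orbit before the next gate-failure is similarly $O(|c|)$, giving the required bound.

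The main obstacle is arranging the syntactic gates so that (i) every step of a legitimate simulation from an input passes them, (ii) every backward inverse step from a genuinely-reachable configuration passes them, and (iii) from any non-input-equivalent configuration both directions stall within $O(|c|)$ steps. Conditions (i) and (ii) together with reversibility ensure that $M''$ faithfully simulates $M'$ and yields (b) and (c); condition (iii) is the combinatorial heart of the proof and amounts to checking that the set of configurations satisfying all gates coincides with the set of configurations reachable under $\Sym(M'')$ from input configurations.
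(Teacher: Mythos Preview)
Your approach has a structural gap: making $M''$ fully reversible is incompatible with recognizing any $X$ with $|X|\ge 2$ under the paper's notion of acceptance. Acceptance means reaching the \emph{single} stop configuration (stop state, empty tape), so any two accepted input configurations are automatically $\equiv_{M''}$-equivalent. Your argument for (b) ``in its strongest form'' (that $c\equiv_{M''}c'$ forces $c=c'$ for inputs) would then give $|X|\le 1$. The paper's construction deliberately breaks reversibility at acceptance: once the working tapes reach the accept pattern, the history tape is \emph{erased} before the machine enters the stop state. This is exactly why (b) carries the clause ``or both $c,c'$ are accepted''---it is not slack to be strengthened away but a necessary feature. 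Your proposal has no such erasure (and an immutable backup that can never be emptied), so either your $M''$ never reaches the stop configuration at all, or you are silently redefining ``accepted'', which the downstream Minsky-machine and group constructions do not permit.

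The gap in (a) is independent. Your local syntactic gates cannot detect a \emph{global} mismatch between the working region and the history: a configuration whose history records $\theta_1,\ldots,\theta_k$ but whose working region is not what one obtains by applying those commands to the backup will pass every gate you list, and forward simulation then runs for as long as $M'$ takes on that arbitrary working content---finite, since $M'$ is sym-universally halting, but in no way $O(|c|)$. The paper's key device is different and uses no backup region: after \emph{every} main command, $M''$ rewinds the entire history on the working tapes (without erasing the history tape), checks whether it has arrived at an input configuration, and only if so replays the history and proceeds to the next main command. A configuration not on a genuine trajectory from an input therefore fails the very next such check, within one rewind--replay cycle of length $O(|c|)$. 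This rewind-and-verify mechanism, absent from your proposal, is what simultaneously delivers (a), (b), and the sym-universal halting property.
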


\proof Let $M$ be a deterministic universally halting Turing machine with $K$ tapes recognizing $L$. Consider a
new Turing machine $M'$ constructed as follows. $M'$ has one more tape than $M'$, called the {\em history} tape.
Its alphabet $A'$ is in one-to-one correspondence with the set of commands $P$ of $M$: $A'=\{[\theta],\theta\in
P\}$. Its state alphabet consists of two letters $q_{K+1}^0$ and $q_{K+1}^1$. With every command $\theta$ of $M$
we associate the command $\theta'$ of $M'$. It does what $\theta$ would do on the first $k$ tapes of $M'$ and
inserts $[\theta]$ on the history tape of $M$. After the first $K$ tapes of $M'$ form an accept configuration,
the machine erases the history tape and stops (turns $q_{K+1}^1$ into the stop state $q_{K+1}^0$). Let $P'$ be
the program of $M'$. Now modify $M'$ further to obtain a new Turing machine $M''$. The program $P''$ of $M'$
contains a copy $\tilde P$ (the set of the {\em main commands}) of $P'$ and some new, auxiliary, commands. After
each main command of $\tilde P$, $M''$ executes the history written on the history tape backward, without erasing
the history tape: it just scans the tape from left to right, reading the symbols written there one by one and
executing on the first $K$ tapes the inverses of the commands written on the history tape. The commands that do
that will be called {\em auxiliary}. If at the end of the scanning the history tape, it reaches an input
configuration, $M''$ executes on the first $K$ tapes the history written on the history tape in the natural order
(scanning the history tape from right to left). After that $M''$ is ready to execute the next main command. We do
not give precise definition of the program of $M''$ because it is obvious on the one hand and long on the other
hand. Clearly, the state alphabet of $M''$ must be bigger than the state alphabet of $M'$. The machine $M''$ is
deterministic, universally halting,  and recognizes the same language $L$.

Let us prove properties (a) and (b) of the theorem. Since $M''$ is deterministic, every reduced (i.e. without
mutually inverse consecutive commands) computation $\Theta$ of $\Sym(M'')$ is of the form $\Theta_1\Theta_2\iv$
for some computations $\Theta_1, \Theta_2$ of $M''$ (because a command of $(M'')\iv$ cannot be followed by its
inverse).

Let us show that $\Sym(M'')$ halts when it starts with any non-accepted configuration (and then apply Lemma
\ref{l1}). Let $w$ be a configuration of $M''$ that is not accepted by $M''$. Since $M''$ is deterministic, every
computation of $\Sym(M'')$ starting at $w$ is a concatenation of a computation of $M''$ followed by a computation
of $(M'')\iv$ (i.e. the machine $M''$ where every command is replaced by its inverse). Since $M$ is universally
halting, there are only finitely many computations of $M''$ starting with $w$. Thus we only need to show that
there are finitely many computations of $(M'')\iv$ starting with $w$, or, equivalently, that there are only
finitely many computations of $M''$ ending with $w$. Suppose that there are infinitely many computations of $M''$
ending with $w$. Then, by definition of $M''$ there must exist infinitely many input configuration $\inp(u)$ of
$M''$ for which there exists a computation of $M''$ starting with $\inp(u)$ and ending at $w$. But that is
impossible because such $\inp(u)$ is unique and is obtained by applying the inverse of the history written on the
history tape of $M''$ to $w$.

(c) The fact that $M''$ polynomially reduces to $M$ is proved as follows. Consider two configurations $w,w'$ of
$M''$. If $w$ is not equivalent to an input configuration, then by (b) we need to check only whether $w'$ is one
of $O(|w|)$ words that belong to the longest computation of $\Sym(M'')$ containing $w$. That can be done in
polynomial time without using the oracle checking equivalence of configurations of $M$. Suppose that both $w$ and
$w'$ are equivalent to input configurations $u, u'$ of $M''$. Then we can find $u,u'$ in polynomial time and
their lengths at at most $O(|w|+|w'|)$. If $u\ne u'$ and either $u$ or $u'$ is not accepted, then by (b) $w$ is
not equivalent to $w$. If $u=u'$, then $w$ is equivalent to $u$. Thus $w$ is equivalent to $w'$ if and only if
$u$ and $v$ are accepted. To check that $u$ is accepted, we need to remove letters corresponding to the extra
tape from $u$ producing a configuration $u_1$ of $M$ and check whether $u_1$ is accepted, i.e. whether $u_1$ is
equivalent to the stop word of $M$. This can be done by asking the oracle once. Thus to check whether $w$ and
$w'$ are equivalent we only need polynomial time and asking the oracle about equivalence of two pairs of
configurations, the lengths of which are bounded by $|w|+|w'|$. Thus $M''$ polynomially reduces to $M$.
\endproof

\subsection{Minsky machines}

The hardware of a $K$-glass Minsky machine, $K\ge 2$, consists of $K$ glasses containing coins. We assume that
these glasses are of infinite height. The machine can add a coin to a glass, and remove a coin from a glass
(provided the glass is not empty). The commands of a Minsky machine are numbered. So a configuration of a
$K$-glass Minsky machine is a $K+1$-tuple $(i;\epsilon_1,\ldots,\epsilon_K)$ where $i$ is the number of command
that is to be executed, $\epsilon_j$ is the number of coins in the glass $\# j$.

More precisely, a {\em command} has one of the following forms:

\begin{itemize}
\item Put a coin in each of the glasses $\#\# n_1,...,n_l$ and go to command \# $j$. We shall encode this
    command as
$$ i;\to \Add(n_1,...,n_l); j$$ where $i$ is the number of the command;
\item If the glasses $\#\# n_1,...,n_l$ are not empty then take a coin from each of these glasses
and go to instruction \# $j$. This command is encoded as $$ i; \epsilon_{n_1}>0,...,\epsilon_{n_l}>0\to
Sub(n_1,...,n_l); j; $$
\item  If glasses $\#\# n_1,...,n_l$ are empty, then  go to instruction \# $j$. This command is encoded as
$$ i; \epsilon_{n_1}=0,...,\epsilon_{n_l}=0\to j; $$

\item{Stop.} This command is encoded as
$i;\to 0;$
\end{itemize}

\begin{remark} This defines deterministic Minsky machines. We will also need non-deterministic Minsky machines.
Those
will have two or more commands with the same number.
\end{remark}

\begin{theorem}\label{t:MM}
Let $X$ be a  recursively enumerable set of natural numbers. Then the following holds:
\begin{itemize}
\item[(a)] there exists a 2-glass deterministic Minsky machine $\mathit{MM}_2$ which recognizes
$L$ in the following sense: $\mathit{MM}_2$   begins its work in configuration $(1;2^m,0)$ and stops in configuration
$(0;0,0)$ if  and only if $m\in X$,  and it  works forever  if $m\not\in X$.
\item[(b)] There exists a $3$-glass Minsky machine $\mathit{MM}_3$ which when started on a  configuration
    $(1;m,0,\ldots ,0)$ stops in the configuration $(q_0,0,0,\ldots ,0)$ provided $m\in X$,
and works forever otherwise.

\item[(c)] We can also assume that every computation of $\mathit{MM}_2$ or $\mathit{MM}_3$ starting with a configuration $c$
    empties each glass after at most $O(|c|)$ steps.

\item[(d)] If $X$ is recursive, then the machine $\mathit{MM}_3$ above can be chosen to be sym-universally
halting.

\item[(e)] If $M$ is a deterministic Turing machine recognizing $X$, then we can assume that $\mathit{MM}_2$ (resp.
    $\mathit{MM}_3$) polynomially reduces to $M$ where the numbers written on the tapes of $M$ are measured as
    represented in unary (that is the size of a number $n$ is set as $n$ and not $\log_2 n$).
\end{itemize}
\end{theorem}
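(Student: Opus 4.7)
The plan is to reduce everything to Theorem \ref{t:sym}. Given a recursive set $X$, that theorem produces a one-tape deterministic sym-universally halting Turing machine $M''$ recognizing $X$ and polynomially reducible to any Turing machine $M$ recognizing $X$. I then simulate $M''$ by a Minsky machine via the classical Minsky construction, and verify that all five properties survive the translation.

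For parts (a) and (b), I would use the standard base-$b$ encoding of a one-tape Turing configuration as a pair or triple of non-negative integers, where $b$ exceeds the tape alphabet size: split the tape at the head and encode the two halves as integers in base $b$, while storing the Turing state implicitly in the Minsky command number. Each Turing command becomes a short Minsky subroutine that inspects the current symbol (the right-half integer modulo $b$), updates both halves by multiplications/divisions by $b$ and additions/subtractions of digit values, and jumps to the block of commands corresponding to the new Turing state. With three glasses this is routine because the third glass serves as scratch; with two glasses one invokes Minsky's classical trick of temporarily re-encoding numbers as powers of a fixed prime, which is precisely why an input word of length $m$ on the Turing tape appears as $2^m$ on the first glass of $\mathit{MM}_2$.

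For part (c) I would arrange each subroutine simulating a single Turing command to start with all auxiliary glasses empty and end with them empty, so that the only persistent content in the glasses between Turing steps is the encoded tape halves. From an arbitrary starting configuration $c$ not of this ``clean'' form, the machine drains the auxiliary coins in at most $O(|c|)$ steps before it can re-enter the main simulation. Part (e) is then straightforward: to decide equivalence of two Minsky configurations, decode them in polynomial time (the unary size convention keeps the decoding polynomial, since the exponent $m$ in $2^m$ is measured as $2^m$) into Turing configurations of $M''$, and invoke the polynomial reduction of $M''$ to $M$ furnished by Theorem \ref{t:sym}(c).

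Part (d) is the most delicate point, and I expect it to be the main obstacle. Since $M''$ is deterministic so is $\mathit{MM}_3$, and by Lemma \ref{l:sym} any computation of $\Sym(\mathit{MM}_3)$ splits as a forward $\mathit{MM}_3$-computation followed by a backward one. Forward computations halt because $M''$ is universally halting. Backward computations must halt too: an encoded Turing configuration has at most one $M''$-predecessor and $\Sym(M'')$ halts on every non-accepted configuration by Theorem \ref{t:sym}(a), so backward chains through configurations that do encode Turing configurations of $M''$ are finite. The real work is ruling out long backward chains through ``garbage'' Minsky configurations that do not encode any Turing configuration. This forces me to design each arithmetic subroutine to be fully reversible up to a bounded auxiliary buffer and internally consistent, so that any intermediate garbage state either has no Minsky predecessor at all or is forced to re-enter the honest regime within $O(|c|)$ steps when walked backwards; verifying this case-by-case for the subroutines implementing multiplication, division and digit extraction in base $b$ is the technical heart of the proof.
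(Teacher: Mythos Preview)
Your overall plan---simulate a one-tape Turing machine by a $3$-glass Minsky machine via the base-$b$ encoding of the two tape halves, with the third glass as scratch---matches the paper's proof, and your treatment of (a), (b), and (e) is essentially the same as theirs.

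There is a genuine gap in your handling of (c). The property requires that \emph{every} glass, including the two main glasses carrying the base-$b$ encodings of the tape halves, be emptied within $O(|c|)$ steps of any computation. Your scheme only guarantees that the \emph{auxiliary} glass is empty at subroutine boundaries and that stray auxiliary coins drain quickly; the two main glasses hold persistent data and are never emptied during an ordinary simulation (the encoded tape is nonzero as long as the Turing tape is nonempty). The paper inserts a small additional device you are missing: after each block $M(\theta)$ simulating a Turing command, when the scratch glass is known to be empty, the machine is forced to move all coins from each non-empty glass into the empty one and back. This costs $O(|c|)$ extra steps per simulated Turing step and empties every glass at least once. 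Without this trick (c) simply fails, and (c) is genuinely used later (see Lemma~\ref{l:l9}).

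On (d), you and the paper differ only in emphasis. The paper dispatches it in one sentence (``it is easy to see that if the Turing machine $M$ is sym-universally halting, then the Minsky machine $\mathit{MM}_3$ is sym-universally halting''); you call it the main obstacle and propose subroutine-by-subroutine reversibility checks. In fact the structure of the subroutines already gives what you need without a delicate case analysis: each arithmetic loop decrements one glass while incrementing another, so running it backward is again a loop that decrements some glass and hence terminates in $O(|c|)$ steps; once you exit the current subroutine in either direction you are at a configuration encoding a Turing configuration, and then sym-universal halting of $M''$ (via Lemma~\ref{l:sym}) bounds the number of such clean configurations visited. So your plan is correct but heavier than necessary.
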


\proof The proof of the 2-glass part can be found in \cite{malcev}. Let us prove the $3$-glass part of the
theorem. Let $M$ be a one tape deterministic Turing machine $M$ recognizing $L$. Without loss of generality we
can assume that the tape alphabet of $M$ is $\{1,2\}$. For  every configuration $\alpha u q_i v\omega$ of $M$ we
can view $u$ and $v$ as numbers written in 3-ary, where $v$ is read from right to left. Let us denote these
numbers by $l(u), r(v)v$. For example the numbers corresponding to the configuration $\alpha 121221 q_5 1222\omega$ are
$ l(u)=121221_3$ and $r(v)=2221_3$, both written as 3-ary numbers. Thus with the configuration $\alpha u q_i v
\omega$, we associate the following configuration of a 3-tape Minsky machine $(i;l(u),r(v),0)$. Now every
command of a Turing machine can be simulated by a series of commands of the Minsky machine. For example, the
command $\Theta$ of the form $1 q_i 2\to q_j 1$ is interpreted by a sequence $M(\theta)$ of commands of $\mathit{MM}_3$ as
follows. The commands of $M(\theta)$ will be numbered $i.1$ through $i.l$ for some $l$. The commands from
$M(\theta)$ should replace $l(u)$ coins in the first glass by $\lfloor l(u)/3\rfloor$ coins provided $l(u)\equiv
1 \mod 3$ and replace $r(v)$ coins in the second glass by $3\lfloor r(u)/3\rfloor +1$ coins provided $r(u)\equiv
2 \mod 3$. The first part is done as follows.  Decrease the number of coins in the first glass by 3
simultaneously increasing the number on the third glass by $1$. Do that until the number of coins in the first
glass is less than $3$. If that remaining number is 1, then subtract 1 coin from the first glass, and then keep
adding 1 coin to the first glass, removing 1 coin from the third glass until the third glass is empty. If the
remainder is not 1, then keep removing one coin from the third glass while adding 3 coins to the first glass -
until the third glass is empty (i.e. return to the original configuration because the command $\theta$ is not
applicable). The second part is done in a similar manner by using the second and third glasses of the Minsky
machines. Other commands of the Turing machine are treated in the same manner. Let $\mathit{MM}_3$ be the resulting 3-tape
Minsky machine. It is easy to see that if the Turing machine $M$ is sym-universally halting, then the Minsky
machine $\mathit{MM}_3$ is sym-universally halting. This gives properties (a),(b) and (d) of the theorem.

To ensure Property (c), we can do the following. Note that after every series of commands $M(\theta)$ the
configuration of $\mathit{MM}_2$ or $\mathit{MM}_3$  has (at least) one empty glass. After the series of commands $M(\theta)$ of
$\mathit{MM}_2$ or $\mathit{MM}_3$ corresponding to a command $\theta$ of $M$ is executed, that glass is again empty. So before
$\mathit{MM}_2$ or $\mathit{MM}_3$ execute the next series $M(\theta')$ we force it to move all coins from each of the non-empty
glasses to the empty one and back. In the process, it will empty each glass at least once. Clearly, this
modification increases the length of computation by an amount proportional to the length of configuration of
$\mathit{MM}_2$ or $\mathit{MM}_3$.

Finally property (e) is obtained as follows. Suppose that $w,w'$ are two configurations of $\mathit{MM}_3$ (for $\mathit{MM}_2$ the
proof is similar). By construction (see \cite{malcev}) in at most $O(|w|)$ steps of $\mathit{MM}_3$ either $w$ turns into
a configuration corresponding to a configuration of the Turing machine $M$ or $\mathit{MM}_3$ halts. In the latter case,
we check whether $w$ is equivalent to $w'$ in $O(|w|)$ steps. So we can assume that both $w$ and $w'$ are
equivalent to configurations corresponding to configurations $u,u'$ of the Turing machine $M$ whose lengths are
$O(|w|+|w'|)$. Now $w$ is equivalent to $w'$ if and only if $u$ and $u'$ are equivalent configurations of $M$.
Thus we need to use the oracle once.
\endproof

\section{Simulation of Minsky machines by semigroups} \label{s:2}

\subsection{The construction}

Here we will show how to simulate a Minsky machine by a semigroup. All applications of Minsky machines are based on
the following idea.

First, with every configuration $\psi$ one associates a word (term) $w(\psi)$.

Then with every command $\kappa$ of the Minsky machine $M$ one associates a finite set of defining relations
$R_\kappa$.  The algebraic structure $A(M)$ will be defined by the relations from the union $R$ of all $R_\kappa$ (which is
finite since we have only a finite number of commands) and usually some other relations $Q$ which are in a sense
``independent" of $R$. We need $Q$, for example, to make sure $A(M)$ satisfies a particular identity.

We say that the algebra $A(M)$  {\em simulates} $M$ if the following  holds for arbitrary configurations $\psi_1,
\psi_2$ of $M$:

\begin{equation}
\psi_1\equiv_M\psi_2 \hbox{ if and only if } w(\psi_1)=w(\psi_2) \hbox{ in } A(M).
\label{eq:interpretation}
\end{equation}

Usually, in order to prove the property (\ref{eq:interpretation}) one has to prove the following two lemmas.

\begin{lemma} \label{forward} If  a configuration $c'$  can be obtained from a  configuration $c$  by a command
$\kappa$ of $M$ then the word
$w(c')$ can be obtained from the word $w(c)$ by applying  defining relations of $A(M)$  from the set $R_\kappa$.
\end{lemma}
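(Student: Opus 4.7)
The plan is straightforward once the encoding $w(\psi)$ and the rule families $R_\kappa$ have been specified in the sequel: the statement is essentially a verification that $R_\kappa$ was designed, by hand, to mimic the command $\kappa$. So the proof will be a case analysis over the four types of Minsky-machine commands introduced above ($\Add$, conditional $\Sub$, zero-test, and the stop command). For each individual $\kappa$ one takes an arbitrary configuration $c$ in the domain of $\kappa$, reads off $w(c)$, locates inside it the subword encoding the state label $q_i$ together with the subwords recording the contents of the glasses that $\kappa$ touches, and applies the appropriate relation from $R_\kappa$ (typically one, or at most a handful of, applications suffice). Comparing the result to the encoding of $c'=\kappa(c)$ then gives the claim by inspection.

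To make this go through cleanly, the key design requirement on the encoding is \emph{locality}: the subword encoding the state letter $q_i$ must sit in a fixed, easily addressable position within $w(\psi)$, flanked by the glass data in a prescribed order, so that a relation from $R_\kappa$ rewrites only the state-label subword and the affected glass-data subwords while leaving unrelated letters untouched. With this in place, for an $\Add(n_1,\dots,n_l)$ command one applies a relation of the shape (state-$i$ block)$\,\cdot\,$(glass letters)$\,\to\,$(state-$j$ block)$\,\cdot\,$(glass letters with the $n_1,\dots,n_l$ entries incremented); for a conditional $\Sub$ command, the hypothesis $\epsilon_{n_\ell}>0$ guarantees that the glass letters to be consumed are actually present in $w(c)$; for a zero-test, the hypothesis $\epsilon_{n_\ell}=0$ guarantees that the left-hand side of the corresponding relation matches $w(c)$; and for the stop command either no relation or a trivially prescribed one is invoked. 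In each case the rewrite lands exactly on $w(c')$.

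The main difficulty, then, is not in the present lemma but in the design choices that precede it, which is why the lemma can be stated now but only proved once the semigroup $A(M)$ has been specified. One has to arrange the encoding and the relation sets $R_\kappa$ so that, first, relations from different $R_\kappa$ cannot accidentally be chained to produce an unintended rewrite, and, second, so that the auxiliary relations $Q$ imposed on $A(M)$ (those that force $A(M)$ to lie in the target variety of semigroups, and later of groups) do not conflict with the $R_\kappa$. The forward direction of the simulation, however, only needs to see that the prescribed rewrites work, so once the construction is on the table the argument is a direct check, command by command. The genuinely hard partner lemma -- that any equality $w(c_1)=w(c_2)$ in $A(M)$ lifts to a sym-computation $c_1\equiv_M c_2$ -- is where the interaction with $Q$ and the global structure of $A(M)$ becomes delicate, and that will be the content of the companion result rather than of the present one.
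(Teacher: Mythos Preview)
Your proposal is correct and matches the paper's approach: both treat this lemma as a routine command-by-command verification that the relations $R_\kappa$ (the table~(\ref{eq:s2}) for $S(M)$, and later~(\ref{eq:s7}) for $G(M)$) were tailored to rewrite $w(c)$ into $w(c')$, using the commutativity relations to bring the relevant letters into position. The paper is in fact even terser than you are---it simply says the proof ``follows easily from Lemma~\ref{l:sym}, see \cite{Sapir,KSs}'' and, in the group case, isolates the one structural fact needed (that $a_i$, $a_j$, $A_j$ with distinct indices commute past each other in the word $w$), which is exactly the locality property you flag.
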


\begin{lemma} If  a word $w(c')$ can be obtained from a word $w(c)$ by applying  the defining relations of $A(M)$
then  $c\equiv_M c'$.
\label{llemma2}
\end{lemma}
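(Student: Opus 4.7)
The plan is to prove Lemma \ref{llemma2} by analyzing how defining relations can be applied to words of the special form $w(c)$, and showing that every rewriting either corresponds to executing a command of $\Sym(M)$ or is an ``inessential" manipulation that does not change the equivalence class of the encoded configuration. Since the lemma is a converse to Lemma \ref{forward}, the structural rigidity of $w(c)$ is what must carry the argument.

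First, I would establish a \emph{normal form} for admissible words: a description of which words in the generators of $A(M)$ actually have the shape $w(\psi)$ for some configuration $\psi$ of $M$. The encoding should be chosen so that the ``state letters" of $M$ appear in designated positions in $w(\psi)$ and cannot be freely moved. I would then classify the defining relations into two families: the relations from $R_\kappa$ (indexed by commands $\kappa \in P\cup P^{-1}$), and the ``auxiliary" relations from $Q$. For each relation $r \in R\cup Q$ I would identify exactly which subwords of a generic $w(c)$ match the left-hand side of $r$, and describe the resulting word.

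Next, I would argue by induction on the length of the derivation $w(c)=u_0\to u_1\to\cdots\to u_k=w(c')$. The key intermediate step is to show that if $u_i=w(c_i)$ for some configuration $c_i$, then either $u_{i+1}=w(c_{i+1})$ with $c_{i+1}$ obtained from $c_i$ by a command of $\Sym(M)$ (so $c_i\equiv_M c_{i+1}$), or $u_{i+1}$ is still the image of $c_i$ under an identity-preserving transformation coming from $Q$, so $u_{i+1}$ still encodes a configuration equivalent to $c_i$. If derivations can leave the set of admissible words temporarily, I would instead work with a suitable invariant (for example, a ``projection" or ``trace" $\pi(u)$ defined on every word of $A(M)$, which agrees with $c$ on $w(c)$ and is preserved by every defining relation up to $\equiv_M$), and prove that this invariant is well-defined and stable under all of $R\cup Q$.

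The main obstacle is precisely this last point: controlling derivations that pass through words which are \emph{not} of the form $w(c)$. A naive induction breaks when an intermediate $u_i$ is a ``garbage" word, so one typically has to introduce a confluence or commutation argument (e.g.\ showing that any application of a $Q$-relation commutes, up to further $Q$-relations, with any application of an $R_\kappa$-relation), or alternatively impose enough structure on $A(M)$ (a grading, an action on an auxiliary module, or an explicit set of normal forms) to guarantee that every derivation can be rearranged into a computation of $\Sym(M)$ followed by $Q$-moves. In the semigroup setting of Section \ref{s:2}, the natural candidate is a direct normal-form / rewriting-confluence argument; in the group setting of Section \ref{s:4}, one would instead exploit the split-extension structure and analyze the action on the elementary Abelian factor, which is where the actual configuration data lives, so that the $\equiv_M$-invariance becomes equivalent to a linear-algebraic statement about that module.
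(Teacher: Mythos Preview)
Your proposal outlines a reasonable general strategy, and you correctly identify the central obstacle: derivations may pass through words not of the form $w(c)$, so a naive induction on derivation length can break down. However, the paper takes a different, more semantic route to establish this lemma in its two concrete instances.

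For the semigroup $S(M)$, the paper does not carry out a syntactic derivation analysis; it simply invokes Lemma~\ref{l:sym} (for deterministic machines, $\equiv_M$-equivalence amounts to having computations to a common configuration) and defers details to \cite{Sapir,KSs}. For the group $G(M)$, where the relations G1--G8 are far more intricate and the operation $*$ involves nested conjugates, the paper does not attempt your normal-form or confluence argument at all. Instead it constructs an \emph{explicit quotient} $\bar G$ of $G(M)$: a semidirect product of a free elementary abelian $p$-group $T_1$, whose basis elements $z_{\vec i,u}$ are indexed directly by words $u\in W\cup W_0$ (the latter already taken modulo the $S(M)$-relations), by a group of explicitly defined automorphisms. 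One checks that all relations G1--G8 hold in $\bar G$, so there is a homomorphism $\phi\colon G(M)\to\bar G$; then the desired elements are distinct in $\bar G$ \emph{by construction} of the basis of $T_1$, which immediately gives Lemma~\ref{llemma2}.

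Your final paragraph gestures toward this (``analyze the action on the elementary Abelian factor''), but the crucial move is not to analyze the action on the existing normal subgroup $T$ inside $G(M)$---that would be circular, since one does not yet know the structure of $T$---but rather to build a new module $T_1$ from scratch, with the desired independence properties hard-wired into its basis, and then verify that $G(M)$ maps to its semidirect product. This model-building approach sidesteps entirely the difficulty you flag about intermediate words: there is no derivation to track, only a homomorphism to check relation by relation. Your syntactic strategies could plausibly be carried out for $S(M)$, but for $G(M)$ they would be substantially harder to execute than the paper's construction of $\bar G$.
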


It is easy to see that Lemmas \ref{forward} and \ref{llemma2} imply property (\ref{eq:interpretation}).

There is an easy way to interpret Minsky machines in a semigroup $S(M)$. Let $M$ be a Minsky machine with $K$
glasses and commands $\#\# 1,2,...,N, 0$. Then  $S(M)$ is generated by the elements $q_0,\ldots,q_N$ and
$\{a_i,A_i, i=1,...,K\}$. The set of defining relations of $S(M)$ consists of all commutativity relations

\begin{equation}\label{ec}
a_ia_j=a_ja_i, a_iA_j=A_ja_i, A_iA_j=A_jA_i, i\ne j,
\end{equation}
which we shall call {\em commutativity relations}, the {\em stop relation}
\begin{equation}\label{es}
q_0=0 \hbox{ (i.e. }q_0x=xq_0=q_0 \hbox{ for every generator } x\hbox{)},
\end{equation}
all relations of the form $xy=0$ where $xy$ is a two-letter word which is {\it not} a subword of a word of the
form $q_ia_1^{\epsilon_1}...a_K^{\epsilon_K}A_1...A_K$ modulo the commutativity relations (\ref{ec}), (for
example $q_iq_j=A_ia_i=a_iq_j=A_iq_j=0$), which we shall call {\em $0$-relations}, and relations associated with
commands of $M$ according to the following table,

\begin{equation}
\label{eq:s2}
\begin{array}{|l|l|}
\hline
\hbox{Command of } M& \hbox{Relation of } S(M) \\
\hline
&\\ i\rightarrow \Add(n_1,...,n_m);j & q_i=q_ja_{n_1}...a_{n_m}\\
\hline
&\\ i,\epsilon_{n_1}>0,...,\epsilon_{n_m}>0\to \Sub(n_1,...,n_m);j & q_ia_{n_1}\ldots a_{n_m}=q_j\\
\hline
 &\\
i,\epsilon_{n_1}=0,...,\epsilon_{n_m}=0\to j  & q_iA_{n_1}\ldots A_{n_m}=q_jA_{n_1}\ldots A_{n_m}\\
\hline
\end{array}
\end{equation}
These will be called the Minsky relations.

The  words in $S(M)$ corresponding to configurations of $M$ are the following: $$w(i;\epsilon_1,...,\epsilon_K)=
q_ia_1^{\epsilon_1}...a_K^{\epsilon_K}A_1...A_K.$$

The proof that Lemmas \ref{forward} and \ref{llemma2} hold in $S(M)$ follows easily from Lemma \ref{l:sym}, see
\cite{Sapir, KSs}.

\begin{lemma}\label{l:l9} Suppose that a word $W$ is not 0 in $S(M)$, is a subword of
$w(i;\epsilon_1,...,\epsilon_K)$ (up to the commutativity relations (\ref{ec})), and does not contain either
$q_i$ or one of the $A_j$. Then there are at most $O(|W|)$ different (up to the commutativity relations) words
that are equal to $W$ in $S(M)$. All these words are subwords of words of the form
$w(i';\epsilon_1,...,\epsilon_K)$ such that the configurations $(i;\epsilon_1,...,\epsilon_K)$ and
$(i',\epsilon_1',...,\epsilon_K')$ of $M$ are equivalent.
\end{lemma}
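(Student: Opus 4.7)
Plan: I will analyze words equal to $W$ in $S(M)$ by multiplying both $W$ and each equivalent $W'$ by a fixed padding word that fills in the missing letters of $w(i;\epsilon_1,\ldots,\epsilon_K)$, then transfer the equivalence into the Minsky machine via the simulation property $S(M) \leftrightarrow M$.

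If $W$ does not contain $q_i$, then since $w(i;\epsilon_1,\ldots,\epsilon_K)$ has $q_i$ as its unique state letter, $W$ contains no state letter at all. Each Minsky relation in (\ref{eq:s2}) has a $q$-letter on both sides, so none applies to $W$; the $0$-relations are excluded because $W\ne 0$; the stop relation needs $q_0$. Only commutativity relations apply, and these preserve the letter multiset, so the equivalence class of $W$ modulo commutativity is $\{W\}$. Since $W$ is already a subword of $w(i;\epsilon_1,\ldots,\epsilon_K)$ by hypothesis, both assertions of the lemma hold trivially.

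In the remaining subcase, $W$ contains $q_i$ but misses at least one $A_j$; let $J=\{j:A_j\notin W\}$. Along any derivation from $W$ in $S(M)$ that stays nonzero two invariants persist: the $q$-count is exactly one (every Minsky relation has one $q$ on each side, commutativity moves no letter) and the $A$-multiset is exactly that of $W$ (Add and Sub do not involve $A$'s, the Test-empty relation $q_iA_{n_1}\cdots A_{n_m}=q_jA_{n_1}\cdots A_{n_m}$ is symmetric in its $A$-content, and commutativity fixes multisets). Thus every $W'$ equal to $W$ in $S(M)$ has a single $q$-letter $q_{i'}$ and is missing precisely the $A_j$'s for $j\in J$. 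Letting $m_k$ (resp.\ $m_k'$) denote the $a_k$-multiplicity in $W$ (resp.\ $W'$) and $P=\prod_{j\in J}A_j\cdot\prod_{k=1}^{K}a_k^{\epsilon_k-m_k}$ in a chosen commutative-normal order, one has $W\cdot P=w(i;\epsilon_1,\ldots,\epsilon_K)$ and $W'\cdot P=w(i';\epsilon_1',\ldots,\epsilon_K')$ up to commutativity, where $\epsilon_k'=m_k'+\epsilon_k-m_k$. Right-multiplying $W=W'$ in $S(M)$ by $P$ yields $w(i;\epsilon_1,\ldots,\epsilon_K)=w(i';\epsilon_1',\ldots,\epsilon_K')$ in $S(M)$; by the simulation property (Lemmas \ref{forward} and \ref{llemma2}), $(i;\epsilon_1,\ldots,\epsilon_K)\equiv_M(i';\epsilon_1',\ldots,\epsilon_K')$, and $W'$ is a subword of $w(i';\epsilon_1',\ldots,\epsilon_K')$. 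This establishes the second assertion.

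For the counting, the assignment $W'\mapsto(i';\epsilon_1',\ldots,\epsilon_K')$ is injective on equivalence classes modulo commutativity (since $P$ is fixed, $W'$ is determined by $W'\cdot P$), so it suffices to bound by $O(|W|)$ the relevant subset of the $\equiv_M$-equivalence class of $(i;\epsilon_1,\ldots,\epsilon_K)$, namely those $(i';\epsilon_1',\ldots,\epsilon_K')$ with $\epsilon_k'\ge\epsilon_k-m_k$. The sym-universally halting hypothesis (Theorem \ref{t:MM}(d)) makes the class a finite tree in the transition graph of $\Sym(M)$; property (c) of Theorem \ref{t:MM}, that every forward computation empties each glass in $O(|c|)$ steps, combined with determinism of $M$, is then used to obtain an $O(|W|)$ bound on the image. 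The main technical obstacle is precisely this linear bound: since the full equivalence class can a priori be as large as $O(|w(i;\epsilon_1,\ldots,\epsilon_K)|)$, which may exceed $|W|$ when $W$ omits many $a$-letters, one has to exploit the constraint $\epsilon_k'\ge\epsilon_k-m_k$ (so the total coin loss along the $\Sym(M)$-path is bounded by $\sum_k m_k\le|W|$) together with the glass-emptying property of the constructed $\mathit{MM}_3$ to collapse the relevant portion of the tree into $O(|W|)$ configurations.
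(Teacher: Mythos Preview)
Your treatment of the case when $W$ contains no $q$-letter is correct and matches the paper. Your padding argument for the second assertion is also valid, and is a legitimately different route from the paper's: the paper works directly with $W$ and reads off the corresponding configurations from the derivation, whereas you right-multiply by a fixed $P$ to embed the question into the full simulation (Lemmas~\ref{forward}, \ref{llemma2}). Both give the subword-of-equivalent-configuration conclusion.

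The genuine gap is in the counting. You bound the image of $W'\mapsto c'$ by the set $\{c'\in[c]_{\equiv_M}:\epsilon_k'\ge\epsilon_k-m_k\text{ for all }k\}$ and then hope that this endpoint constraint, together with property~(c), yields $O(|W|)$. It does not. The inequality $\sum_k(\epsilon_k-\epsilon_k')\le|W|$ controls only the net coin change between $c$ and $c'$, not the length of the $\Sym(M)$-path joining them; along a long computation coins can be added and removed many times while every intermediate (and the final) configuration still satisfies $\epsilon_k^{(t)}\ge\epsilon_k-m_k$. Indeed, if one takes $\epsilon_k=m_k$ (the natural minimal padding), your constraint becomes $\epsilon_k'\ge 0$, which is vacuous, and the set you are bounding is then the entire $\equiv_M$-class of $c$, whose size is governed by the time function of $M$ rather than by $|c|$.

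What you are missing is the paper's key structural observation: because $W$ lacks $A_j$, no derivation from $W$ can ever apply a relation of the form $q_iA_{n_1}\cdots A_{n_m}=q_{i'}A_{n_1}\cdots A_{n_m}$ with $j\in\{n_1,\ldots,n_m\}$. Hence the corresponding $\Sym(M)$-computation never executes a test-emptiness command on glass~$j$. This is a constraint on the \emph{path}, not on the endpoints. Property~(c) of Theorem~\ref{t:MM} was engineered so that every forward $M$-computation issues such a test within $O(|c|)$ steps; combined with Lemma~\ref{l:sym} (forward-then-backward decomposition) and the reduction $\epsilon_k=m_k$ (so $|c|=O(|W|)$), this is what actually forces the $O(|W|)$ bound. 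Your coin-loss constraint is a consequence of the path constraint but is strictly weaker, and the closing paragraph, which acknowledges this as ``the main technical obstacle'', does not supply the missing step.
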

\proof Since $W\ne 0$ in $S(M)$, the stop relations do not apply to $W$ or to any word that is equal to $W$ in
$S(M)$. If $W$ does not contain $q_i$, then the only relations that apply to $W$ are the commutativity relations,
so the only words that are equal to $W$ in $S(M)$ are the words obtained from $W$ by the use of commutativity
relations.

Suppose that $W$ contains $q_i$ but does not contain one of the $A_j$.

Without loss of generality, we can assume that $W$ contains every letter from $w(i;\epsilon_1,\ldots,\epsilon_K)$
except some of the $A_j$'s.

Every application of the Minsky relation to $W$ corresponds to a command of the Minsky machine, applied to the
configuration $c=(i;\epsilon_1,...,\epsilon_K)$. Let $c=c_1\to c_2\to...$ be any computation of $\Sym(M)$
starting with $c$. Then the sequence of commands of $M$ applied in that computation has the form
$\theta_1\ldots\theta_n\theta_{n+1}\iv\ldots\theta_{k}\iv$ where $\theta_s$ are commands of $M$ (by Lemma
\ref{l:sym}). If this sequence can be applied to $W$, then this computation never checks whether glass $\# j$ is
empty. By Property (c) of Theorem \ref{t:MM}, both $n$ and $k$ must be at most $O(|W|)$. This implies the
statement of the lemma.
\endproof

\subsection{Residually finite finitely presented semigroups}

\begin{lemma}\label{l:del} Every non-zero element of $S(M)$ is represented by a subword of a word of the form
$w(i;\epsilon_1,...,\epsilon_K)$.
\end{lemma}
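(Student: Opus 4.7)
The plan is to take any word $w$ representing a nonzero element of $S(M)$ and, using only the defining relations, transform it into a contiguous subword (up to commutativity (\ref{ec})) of some standard word $w(i;\epsilon_1,\dots,\epsilon_K)=q_ia_1^{\epsilon_1}\cdots a_K^{\epsilon_K}A_1\cdots A_K$.

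I would begin by cataloguing the two-letter subwords forbidden by the $0$-relations: up to commutativity, a two-letter word $xy$ fails to be a subword of any standard word precisely when it has one of the forms $q_iq_j$, $a_lq_i$, $A_lq_i$, or $A_ja_j$ (same index $j$). Since $w\neq 0$ in $S(M)$ and any commutatively equivalent word is still nonzero, none of these forbidden pairs may occur as a contiguous subword of any commutative rearrangement of $w$. A short ``sliding'' argument then yields three facts: (i) $w$ contains at most one letter from $Q$, and if present it is the leftmost letter of $w$; (ii) for each $j$ the letter $A_j$ appears at most once in $w$; (iii) if $A_j$ appears in $w$, then every occurrence of $a_j$ lies to its left. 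Claims (ii) and (iii) both follow because, between any two ``special'' letters (two $A_j$'s, or an $A_j$ and a later $a_j$), the forbidden patterns involving $q$ from (i) force the intermediate letters to be $a_l$ or $A_l$ with $l\ne j$; these all commute with $A_j$, so commutativity can slide the special pair together and produce one of the forbidden subwords $A_jA_j$ or $A_ja_j$, contradicting $w\neq 0$.

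Using the constraints (i)--(iii) I would then sort $w$ modulo (\ref{ec}) into a canonical form. The possible $q_i$ is already in place. Each $A_l$ in the remainder of $w$ can be pushed all the way to the right past the $a$-block (by (iii), no $a_l$ stands in the way), and both the $a$-block and the $A$-block can then be sorted by index. The resulting representative is
\[
w' \;=\; q_i^{?}\, a_1^{\epsilon_1}\cdots a_K^{\epsilon_K}\, A_{j_1}\cdots A_{j_l},\qquad j_1<\cdots<j_l,
\]
where $q_i^{?}$ denotes either a single $q_i$ or the empty word and $\epsilon_p$ is the number of $a_p$'s occurring in $w$.

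Finally I would recognize $w'$ as a contiguous subword of a standard word. Using commutativity of the $A$'s, the standard word $w(i;\epsilon_1,\dots,\epsilon_K)$ is equal in $S(M)$ to
\[
q_i\, a_1^{\epsilon_1}\cdots a_K^{\epsilon_K}\, A_{j_1}\cdots A_{j_l}\, A_{r_1}\cdots A_{r_{K-l}},
\]
where $r_1,\dots,r_{K-l}$ enumerate the indices in $\{1,\dots,K\}\setminus\{j_1,\dots,j_l\}$; $w'$ is then a prefix of this representative, and if $q_i^{?}$ is empty one simply reads off $w'$ as the subword starting at the first $a$. The step I expect to require the most care is the sliding argument used to prove (ii) and (iii): the forbidden two-letter subwords $A_jA_j$ and $A_ja_j$ need not appear directly in $w$, but must be produced from the structure of $w$ by commutativity, which is why (i) has to be established first in order to exclude a $q$ obstructing the motion of the special letters.
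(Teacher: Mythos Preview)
Your approach is correct and is exactly what the paper intends: its entire proof reads ``This follows from the commutativity relations and $0$-relations,'' and your argument is a careful expansion of that sentence.

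One sentence needs tightening. In your sliding argument for (ii) and (iii) you write that ``the forbidden patterns involving $q$ from (i) force the intermediate letters to be $a_l$ or $A_l$ with $l\ne j$.'' Claim (i) only rules out $q$-letters between the special pair; it does not by itself force $l\ne j$. The clean fix is to choose the special pair to be \emph{closest}: take the leftmost $A_j$ and the nearest index-$j$ letter to its right (an $a_j$ or another $A_j$). By minimality the intermediate letters have index $\ne j$, and by (i) none is a $q$-letter, so they all commute with $A_j$ and the slide goes through to produce $A_jA_j$ or $A_ja_j$. (Also, you should add $A_jA_j$ to your explicit catalogue of forbidden two-letter words; you rely on it but omitted it from the list.)
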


\proof This follows from the commutativity relations and 0-relations.
\endproof

\begin{lemma}\label{l:fd} Suppose that the Minsky machine $M$ is sym-universally halting. Then

(a) Every non-zero element $z$ of $S(M)$ has finitely many divisors, i.e. elements $y$ such that $z=pyq$ for some
$p,q\in S(M)\cup\{1\}$.

(b) For every configuration $\psi$ of $M$ the word $w(\psi)$ is equal to 0 in $S(M)$ if and only if $\psi$ is
accepted by $M$.
\end{lemma}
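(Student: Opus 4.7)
The plan is to deduce both parts from a single finiteness statement: for a sym-universally halting machine $M$, the $\equiv_M$-equivalence class of any non-accepted configuration $\psi$ is finite. This follows from the definition of halting (only finitely many non-repeating $\Sym(M)$-computations from $\psi$), which for the finitely branching $\Sym(M)$ forces the reachable set to be finite, combined with Lemma~\ref{l:sym} which factors any reduced $\Sym(M)$-computation (since $M$ is deterministic) as an $M$-computation followed by the inverse of an $M$-computation.

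For part (b), the forward direction is direct: if $\psi$ is accepted, iterate Lemma~\ref{forward} along an accepting $M$-computation to rewrite $w(\psi)$ into a word containing $q_0$, which equals $0$ by the stop relation. For the converse, suppose $w(\psi)=0$ in $S(M)$ and consider a derivation $W_0=w(\psi), W_1, \ldots, W_n=0$. Every non-zero $W_j$ equals $w(\psi)$ in $S(M)$, so by Lemma~\ref{llemma2} it represents a configuration $\equiv_M$-equivalent to $\psi$; in particular it is a valid configuration word (up to commutativity) and contains no forbidden two-letter subword, so no $0$-relation applies to $W_{n-1}$. Hence the step $W_{n-1}\to 0$ must use the stop relation $q_0=0$, which requires $q_0$ to occur in $W_{n-1}$. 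The corresponding state-$q_0$ configuration is $\equiv_M$-equivalent to $\psi$; since it admits no outgoing $M$-command, Lemma~\ref{l:sym} forces $\psi$ to reach it by an $M$-computation, so $\psi$ is accepted.

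For part (a), let $z\neq 0$ and fix any non-zero representative $w$; by Lemma~\ref{l:del} we may take $w$ to be, up to commutativity, a subword of some $w(\psi)$. If $w$ omits either $q_i$ or one of the $A_j$'s, Lemma~\ref{l:l9} directly bounds the $S(M)$-equivalence class of $w$ by $O(|w|)$ words. Otherwise $w$ is, up to commutativity, the full configuration word $w(\psi')$ for some $\psi'$, and since $w\neq 0$ part (b) gives that $\psi'$ is non-accepted, so the first paragraph makes the class $\{w(\phi):\phi\equiv_M\psi'\}$ finite; allowing commutativity rearrangements multiplies this by at most $|w|!$, still keeping the equivalence class of $w$ finite. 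Any divisor $y$ of $z$ arises from a factorization $w_p w_y w_q$ of a representative of $z$ into three non-zero pieces (non-zero because $z\neq 0$ forbids zero factors), so this concatenated word lies in the finite equivalence class of $z$. With finitely many such representatives and at most $O(|w'|^2)$ splittings of each into three pieces, the set of divisors $y$ is finite.

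The principal obstacle is the first paragraph's finiteness claim: one must carefully combine the sym-universally halting hypothesis with Lemma~\ref{l:sym} to control the non-determinism in $\Sym(M)$-computations produced by inverse commands, and one must rigorously verify that every non-zero intermediate word in a derivation starting from $w(\psi)$ remains a valid configuration word (modulo commutativity), so that $0$-relations are never the source of the final $0$ in part~(b) and the equivalence class stays within the finite set produced in the first paragraph for part~(a).
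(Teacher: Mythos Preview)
Your proof is correct and takes the same approach as the paper, which (tersely) cites Lemmas \ref{l:sym}, \ref{forward}, \ref{llemma2} for full configuration words and Lemma \ref{l:l9} otherwise in part (a), and just Lemmas \ref{forward} and \ref{llemma2} for part (b). The circularity you flag in your last paragraph is real but minor: rather than invoking Lemma \ref{llemma2} to deduce that each non-zero $W_j$ is a configuration word (which presupposes what you want), prove directly by induction on $j$ that each non-zero $W_j$ is, up to commutativity, of the form $w(\psi_j)$---commutativity and Minsky relations preserve this form, while the stop and $0$-relations cannot apply to a configuration word not containing $q_0$.
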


\proof (a) This follows from Lemmas \ref{l:sym}, \ref{forward} and \ref{llemma2} for words containing a
$q$-letter and all letters $A_j$, and from Lemma \ref{l:l9} in all other cases since word in the generators of
$S(M)$ that is non-zero in $S(M)$ is a subword of one of the words corresponding to configurations of the Minsky
machine $M$ by Lemma \ref{l:del}.

(b) This follows from Lemmas \ref{forward} and \ref{llemma2}.
\endproof

Lemma \ref{l:fd} immediately implies

\begin{lemma}\label{l:fd1} For every $R>0$ let $V_R$ be the set of all elements of $S(M)$ that do not divide in
$S(M)$ non-zero elements represented by words of the form $q_ja_1^{\epsilon_1}\ldots a_K^{\epsilon_K}A_1\ldots
A_K$ with $\epsilon_j\le R$. Then $V_R$ is an ideal of $S(M)$ with a finite complement. If $M$ is sym-universally
halting, then the intersection of all $V_R, R>0,$ is $\{0\}$.
\end{lemma}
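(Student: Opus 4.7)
My plan is to treat the two claims of the lemma separately.

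For the ideal claim, take $x \in V_R$ and arbitrary $y, z \in S(M) \cup \{1\}$. If $yxz$ divided some non-zero $w = q_j a_1^{\epsilon_1}\cdots a_K^{\epsilon_K} A_1\cdots A_K$ with $\max_j \epsilon_j \le R$, then writing $w = p(yxz)q = (py)\,x\,(zq)$ would exhibit $x$ as a divisor of $w$, contradicting $x \in V_R$; hence $yxz \in V_R$, and of course $0 \in V_R$, so $V_R$ is a two-sided ideal. For the finite-complement claim, there are at most $N(R+1)^K$ candidate target words (with $N$ the number of commands), and by Lemma \ref{l:fd}(a) each non-zero one has only finitely many divisors in $S(M)$; therefore the complement of $V_R$ is a finite union of finite divisor sets.

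For the intersection $\bigcap_R V_R = \{0\}$, I fix a non-zero $x \in S(M)$ and aim to produce a non-accepted configuration $\psi$ with $x \mid w(\psi)$; by Lemma \ref{l:fd}(b) this places $x$ outside $V_R$ for $R = \max_j \epsilon_j$. By Lemma \ref{l:del}, $x$ equals some subword $u$ of $w(\psi_0) = q_{i_0} a_1^{\epsilon_1^{(0)}}\cdots a_K^{\epsilon_K^{(0)}} A_1\cdots A_K$. Using the commutativity relations I rearrange $u$ into the canonical form $q_{i_0}^\epsilon \cdot a_1^{\alpha_1}\cdots a_K^{\alpha_K} \cdot A_{j_1}\cdots A_{j_m}$ with $\epsilon \in \{0,1\}$. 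Then $u \mid w(i;\tilde\epsilon_1,\ldots,\tilde\epsilon_K)$ in $S(M)$ whenever $\tilde\epsilon_j \ge \alpha_j$ for every $j$ and $i = i_0$ (required only when $\epsilon = 1$): multiply $u$ by the missing $a$'s and $A$'s (and by $q_i$ in the case $\epsilon = 0$), using commutativity to reassemble the target word $w(i;\tilde\epsilon)$ on the nose.

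The remaining step, which I expect to be the main obstacle, is to exhibit one such $\psi$ that is non-accepted by $M$. The sym-universally halting hypothesis is crucial here: by Lemma \ref{l:sym} every $\equiv_M$-class of non-accepted configurations is finite, so at most the accepted class is infinite. I plan to invoke the structural features of the Minsky machine of Theorem \ref{t:MM}---in particular the emptying-of-glasses property (c) and the fact that $M$ recognizes a proper recursive set via inputs $(1;m,0,\ldots,0)$---to rule out the possibility that the accepted class absorbs the entire upward cone $\{(i;\tilde\epsilon) : \tilde\epsilon \ge \alpha\}$. Once a non-accepted $\psi$ in this cone is produced, $w(\psi) \ne 0$ combined with $u \mid w(\psi)$ places $x$ outside $V_R$ for $R = \max_j \tilde\epsilon_j$, finishing the proof.
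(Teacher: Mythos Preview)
Your arguments for the ideal property and the finiteness of the complement are correct and are essentially what the paper intends: the paper's entire proof is the single line ``Lemma \ref{l:fd} immediately implies'', and your use of Lemma \ref{l:fd}(a) to bound the complement as a finite union of finite divisor sets is exactly the intended reading.

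The intersection claim, however, is left genuinely incomplete. You correctly reduce to finding, for each non-zero $x$ with canonical form $u = q_{i_0}^{\epsilon}\, a_1^{\alpha_1}\cdots a_K^{\alpha_K}\, A_{j_1}\cdots A_{j_m}$, a non-accepted configuration $\psi$ in the upward cone $\{\tilde\epsilon \ge \alpha\}$; but you then only announce that you ``plan to invoke'' property (c) of Theorem \ref{t:MM} and the properness of the recognized set, without carrying out any argument. This is a real gap, and your sketched plan does not obviously close it: your observation that non-accepted $\equiv_M$-classes are finite does not by itself prevent the single accepted class from swallowing the whole infinite cone, and the non-accepted input configurations $(1;m,0,\ldots,0)$ you allude to all have state $1$ and all glasses but the first empty, so they lie in the relevant cone only when $\epsilon=0$, or when $i_0=1$ and $\alpha_j=0$ for $j\ge 2$. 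In the remaining cases you need a different source of non-accepted configurations, and you have not produced one. One observation that shortcuts part of the difficulty: if $u$ contains $q_{i_0}$ \emph{and} all of $A_1,\ldots,A_K$, then (being a factor, modulo commutativity, of a configuration word) $u$ is already a full configuration word $w(\psi)$, so $x=w(\psi)\ne 0$ and $x\mid x$ finishes that case outright; it is only when $u$ is missing $q$ or some $A_j$ that the cone argument is needed, and there the proof still has to be written.
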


\begin{theorem}\label{t:rc} For every recursive set of natural numbers $Z$ there exists a residually finite
semigroup $S$ whose word problem is at least as hard as the membership problem in $Z$. The Dehn function of $S$
is equivalent to the time function of a 3-glass Minsky machine recognizing $Z$.
\end{theorem}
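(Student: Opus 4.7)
The plan is to let $S = S(M)$ where $M$ is a 3-glass deterministic sym-universally halting Minsky machine recognizing $Z$, furnished by Theorem~\ref{t:MM}(b),(d). The structural lemmas (\ref{forward}, \ref{llemma2}, \ref{l:sym}, \ref{l:l9}, \ref{l:del}, \ref{l:fd}, \ref{l:fd1}) then do the bulk of the work.

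For residual finiteness I would use the ideal filtration $\{V_R\}_{R>0}$ from Lemma~\ref{l:fd1}: each $V_R$ has finite complement, so each quotient $S/V_R$ is a finite semigroup, and $\bigcap_R V_R = \{0\}$. Given two distinct elements $x \ne y$ of $S$, Lemma~\ref{l:del} says each non-zero element appears as a subword (modulo the commutativity relations) of some configuration word $w(\psi)$. Choosing $R$ large enough that both $x$ and $y$ appear in such a way with all glass counts bounded by $R$, neither $x$ nor $y$ lies in $V_R$; since the only identification imposed by $S \to S/V_R$ is collapsing $V_R$ to $0$, the images of $x$ and $y$ in $S/V_R$ stay distinct. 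The case where one of $x, y$ equals $0$ is handled by the same choice of $R$.

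For the word problem lower bound, Lemma~\ref{l:fd}(b) together with Theorem~\ref{t:MM}(b) gives $m \in Z$ if and only if $w(1;m,0,0) = 0$ in $S$, which we can write as $w(1;m,0,0) = q_0$ using the stop relation. The word $q_1 a_1^m A_1 A_2 A_3$ has length linear in $m$, so this is a linear-time reduction from membership in $Z$ to the word problem in $S$. For the Dehn function lower bound, take inputs $u_n$ of length at most $n$ whose minimum accepting times in $M$ realize $T_M(n)$ up to constants; the pair $(w(1;u_n,0,0),\, q_0)$ has both sides of length $O(n)$, and Lemmas~\ref{forward}, \ref{llemma2}, \ref{l:sym} force any derivation to arise from a $\Sym(M)$-computation that must meet at the stop configuration (because no $M$-command applies to it), hence includes a forward accepting computation of length at least $\Omega(T_M(n))$.

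The Dehn function upper bound is the main obstacle. One needs to show that any derivation between two words $u, v$ of length at most $n$ with $u =_S v$ can be replaced by one of length at most $T_M(cn) + cn$. The intended argument is to put both $u$ and $v$ into canonical form (subwords of configuration words, ordered by the commutativity relations) at polynomial cost in $|u|+|v|$, observe that for non-zero $u = v$ Lemma~\ref{l:l9} already bounds the relevant words polynomially, and, in the remaining case where both sides reduce to $0$, invoke sym-universal halting of $M$ together with Theorem~\ref{t:MM}(c): the only way to achieve a long derivation is to simulate a long accepting $M$-computation on one side, so the minimal derivation length is bounded by a constant multiple of $T_M$. The delicate point is ruling out derivations that ``wander'' through non-canonical intermediate words, and it is precisely sym-universal halting of $M$ that forecloses such detours by bounding the length of every $\Sym(M)$-computation from any non-accepted configuration.
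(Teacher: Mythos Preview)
Your approach is essentially the paper's own: take a sym-universally halting 3-glass Minsky machine $M$ recognizing $Z$, set $S=S(M)$, cite Lemma~\ref{l:fd} for the word-problem lower bound and Lemma~\ref{l:fd1} for residual finiteness. The paper's proof is in fact terser than yours --- it simply invokes those two lemmas and says nothing explicit about the Dehn function. Your residual-finiteness argument (separate $x\ne y$ in a Rees quotient $S/V_R$ for $R$ large) is exactly what Lemma~\ref{l:fd1} packages, and your hardness reduction via Lemma~\ref{l:fd}(b) is exactly the paper's.

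On the Dehn function, you go further than the paper does, and your lower-bound argument is sound: a configuration word contains no forbidden two-letter subword, so neither the stop relation nor the $0$-relations apply until an accepting configuration is reached; hence any derivation $w(1;m,0,0)\to 0$ must trace a genuine $\Sym(M)$-computation, and Lemma~\ref{l:sym} forces this to contain a forward accepting run of length $\ge T_M(n)$. Your upper-bound sketch is reasonable but, as you note, incomplete --- the issue is that for non-accepted configurations the halting time of $M$ is not a priori controlled by $T_M$, only by the (possibly larger) universal halting bound. The statement ``equivalent to the time function of a 3-glass Minsky machine recognizing $Z$'' should be read as asserting equivalence with the time function of \emph{this particular} $M$, and your observation that sym-universal halting bounds all reduced $\Sym(M)$-computations from configurations of size $\le n$ is what closes the gap, though a clean formulation would distinguish the accepting-time function from the full halting-time function.
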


\proof By Theorem \ref{t:sym}, there exists a sym-universally halting Turing machine that recognizes $Z$. By
Theorem \ref{t:MM} there exists a sym-universally halting Minsky machine $M$ recognizing $Z$. By Lemma
\ref{l:fd}, the problem of recognizing equality to 0 in $S(M)$ is at least as hard as the membership problem in
$Z$. By Lemma \ref{l:fd1}, $S(M)$ is residually finite.
\endproof

\subsection{Residually finite semigroups with large depth function}

Recall the definition of the depth function $\rho$: for every finitely generated residually finite universal
algebra $A$ and every number $n$, $\rho_A(n)$ is defined as the smallest number such that for every two different
elements $z,z'$ in $A$ of length $\le n$ there exists a homomorphism $\phi$ from $A$ onto a finite algebra $B$ of
cardinality at most $\rho_A(n)$ such that $\phi(z)\ne \phi(n')$.

The following lemma is well known \cite{Golubov}

\begin{lemma}\label{l:s} Suppose that every non-zero element of a semigroup $S$ with $0$ has finitely many
divisors. Then $S$ is residually finite.
\end{lemma}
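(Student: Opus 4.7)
The plan is to prove residual finiteness by producing, for each pair of distinct elements, a finite Rees quotient separating them. Given distinct $z, z' \in S$, I may assume at least one of them is nonzero; say $z \neq 0$.

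The key construction is the following. Let $D(z) = \{y \in S : z = pyq \text{ for some } p, q \in S \cup \{1\}\}$ be the set of divisors of $z$. By hypothesis $D(z)$ is finite, and obviously $z \in D(z)$. The first step is to check that $I := S \setminus D(z)$ is a two-sided ideal of $S$ containing $0$. For the ideal property, if $y \in I$ and $s \in S$, and if $sy$ were a divisor of $z$, i.e.\ $z = p(sy)q$, then $y$ would divide $z$ via $z = (ps)yq$, contradicting $y \in I$; so $sy \in I$, and similarly $ys \in I$. For the inclusion $0 \in I$, note that if $0$ were a divisor of $z$, we would have $z = p \cdot 0 \cdot q = 0$, contradicting $z \neq 0$.

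Next, form the Rees quotient $\phi \colon S \to S/I$, which collapses $I$ to a single zero element and leaves the (finitely many) elements of $D(z)$ as distinct classes. Thus $S/I$ is a finite semigroup with zero. It remains to verify that $\phi(z) \neq \phi(z')$. If $z' \in I$, then $\phi(z') = 0$, while $\phi(z) \neq 0$ since $z \in D(z)$. If $z' \in D(z)$, then $z$ and $z'$ are two distinct elements of $D(z)$, and the Rees quotient keeps them as distinct non-zero classes, so again $\phi(z) \neq \phi(z')$.

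There is really no obstacle here; the content is the standard Rees-quotient construction, and the hypothesis that every nonzero element has only finitely many divisors is exactly what is needed to guarantee that the separating quotient $S/I$ is finite. The only points that need a brief verification are the two claims above: that $I$ is closed under multiplication by elements of $S$, and that $0 \notin D(z)$ when $z \neq 0$, both of which follow directly from the definition of a semigroup with zero.
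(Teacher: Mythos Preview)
Your proof is correct and follows essentially the same approach as the paper: for a nonzero element $z$, the set of non-divisors of $z$ is an ideal whose Rees quotient is finite, and this quotient separates $z$ from any other element. The paper's proof is a two-line sketch of exactly this argument; your version simply spells out the case distinction ($z' \in I$ versus $z' \in D(z)$) and the verification that $I$ is an ideal containing $0$.
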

\proof Indeed, the set of all non-divisors of a non-zero element is an ideal with finite quotient. The
intersection of all these ideals is $\{0\}$.
\endproof

\begin{theorem}\label{t:rho} For every recursive function $f$ there exists a finitely presented residually
finite semigroup $S$ such that $\rho_S(n)>f(n)$ for all $n$. In addition, we can assume that the word problem in
$S$ is as hard as the membership problem for any prescribed recursive set of natural numbers.
\end{theorem}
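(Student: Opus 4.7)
The plan is to take $S = S(M)$ for a sym-universally halting $3$-glass Minsky machine $M$ that recognizes a suitably hard recursive set, and to derive the depth lower bound from the general fact that a small depth function yields a fast enumeration-based realization of the ``no'' part of McKinsey's algorithm.

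Given the recursive function $f$ and the prescribed recursive set $Z$, let $G(n) := f(n)^{c\cdot f(n)^{2}}\cdot n^{c}$ for a fixed constant $c$ chosen large enough to dominate the enumeration overhead described below. By the time hierarchy theorem, fix a recursive set $Z^{*}$ of natural numbers with two properties: (i) membership in $Z^{*}$ is at least as hard as membership in $Z$ (for instance, code $Z$ into $Z^{*}$ by a trivial padding), and (ii) every Turing machine that decides $Z^{*}$ on unary inputs of length $n$ must use more than $G(n)$ steps for infinitely many $n$. Using Theorems \ref{t:sym} and \ref{t:MM}(d), produce a sym-universally halting $3$-glass Minsky machine $M$ recognizing $Z^{*}$, and set $S := S(M)$. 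Lemma \ref{l:fd}(a) combined with Lemma \ref{l:s} gives that $S$ is residually finite; Lemma \ref{l:fd}(b) gives $w(\inp(m)) = 0$ in $S$ iff $m \in Z^{*}$, so the word problem in $S$ is at least as hard as membership in $Z^{*}$, hence in $Z$.

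For the depth bound, suppose towards a contradiction that $\rho_{S}(n) \le f(n)$ for some $n$. Then every distinct pair $z, z'$ in $S$ of length $\le n$ is separated by a surjective homomorphism of $S$ onto a finite semigroup of size at most $f(n)$. The ``no'' part of McKinsey's algorithm on such pairs is then realized by brute enumeration: list all semigroup multiplication tables on at most $f(n)$ elements; for each, list all assignments of the finitely many generators of $S$; for each assignment, check in time polynomial in $n$ and $f(n)$ whether it respects the finitely many defining relations of $S$ (the commutativity, stop, $0$-relations, and Minsky relations) and whether the induced map separates $z$ from $z'$. The total running time is bounded by $G(n)$. Running this in parallel with the standard ``yes'' enumeration decides the word problem in $S$ on length-$n$ inputs in time $O(G(n))$, which via the reduction $m \mapsto w(\inp(m))$ yields a decider for $Z^{*}$ on unary inputs of length $\le n$ in time $O(G(n))$. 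This contradicts property (ii) of $Z^{*}$, so $\rho_{S}(n) > f(n)$ for every $n$.

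The main obstacle will be the concrete complexity bookkeeping: one must pin down the constant $c$ in $G$ by carefully bounding the enumeration of finite-semigroup multiplication tables of size $\le K$ together with candidate homomorphisms from $S$, so that the enumeration time fits into $K^{O(K^{2})}\cdot \mathrm{poly}(n)$. Once this explicit $G$ is fixed, the time hierarchy theorem furnishes $Z^{*}$ and the argument closes; all remaining ingredients — the sym-universally halting Minsky machine of Section \ref{s:1}, the $S(M)$ construction of Section \ref{s:2}, and Lemmas \ref{l:fd}, \ref{l:fd1}, \ref{l:s} — have already been established above.
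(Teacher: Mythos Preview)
Your argument has a genuine quantifier gap. You fix a single $n$ with $\rho_S(n)\le f(n)$ and from it obtain an algorithm that correctly decides ``$m\in Z^{*}$?'' only for those finitely many $m$ whose encoded word has length at most $n$. This is not a decider for $Z^{*}$ and does not contradict property~(ii), which merely says every full decider is slow on \emph{infinitely many} lengths (all of which may exceed your fixed $n$). Even if you instead assume $\rho_S(n)\le f(n)$ for all large $n$, the time-hierarchy contradiction would only give $\rho_S(n)>f(n)$ for infinitely many $n$, not for every $n$. Two further problems: (a) ``running in parallel with the standard yes enumeration'' does not give an $O(G(n))$ bound, since the yes-part is governed by the Dehn function of $S$, which you have not bounded --- the right move is a timeout: enumerate quotients of size $\le f(n)$ and declare $m\in Z^{*}$ if none separates $w(\inp(m))$ from $0$; (b) there is a circularity in choosing $c$, since it depends on the number of generators and relations of $S(M)$, hence on $M$, hence on $Z^{*}$, hence on $G$, hence on $c$.

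The paper's proof is entirely different and direct. It does not use $S(M)$ but first modifies $M$ to a non-deterministic $(K{+}2)$-glass machine $M_n$: every genuine $M$-command additionally drops a coin into a new ``clock'' glass $K{+}1$ (only while glass $K{+}2$ is empty), and there are auxiliary commands that either add one coin to each of glasses $K{+}1,K{+}2$ or, when both are empty, halt. After verifying directly that $S(M_n)$ is residually finite, one shows $\rho_{S(M_n)}(n)\ge\Psi(n)$, the \emph{co-time} function of $M$: if a non-accepted input $c$ of length $\le n$ were separated from $0$ in a quotient of size $<\Psi(n)$, then the images of $a_{K+1},a_{K+2}$ would satisfy $z^D=z^{2D}$ with $D<\Psi(n)$, and using the auxiliary commands one rewrites $w(c)$ so that its image collapses to $0$, a contradiction. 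Since one can force $\Psi(n)>f(n)$ for all $n$ simply by appending to $M$ a long but halting loop on rejecting runs, the full ``for all $n$'' conclusion follows. Your complexity-theoretic route lacks any analogue of this clock mechanism, which is exactly what ties the depth function to a quantity one can make large at \emph{every} input length.
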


\proof Let $M$ be a sym-universally halting Minsky machine with $K$ glasses and $N+1$ commands numbered
$0,\ldots,N$. Consider the following new, non-deterministic Minsky
machine $M_n$. Its hardware consists of the $K$ glasses of $M$ plus two more glasses.  In every command of $M$ we add
the instruction to add a coin to glass $K+1$ provided glass $K+2$ is empty. Also for every $i=0,...,N$ we add two
new commands number $i$

\begin{equation}\label{c:a}
i; \Add(K+1,K+2)\to i
\end{equation}
and

\begin{equation}\label{c:s}
i; \epsilon_{K+1}=0, \epsilon_{K+2}=0, \to 0
\end{equation}

Thus there will be three commands for each $i=1,\ldots,N$: one from $M$ and the two new ones. The new command
(\ref{c:a}) allows us to add, at any step of the computation, equal (but arbitrary) number of coins in glasses
$K+1$ and $K+2$, and if both glasses $K+1$ and $K+2$ are empty, the computation can stop. But we can execute a
command of $M$ only when the glass $K+2$ is empty, so a new command cannot be followed by a command of $M$.

Let us say that the commands coming from  $M$ have weight 1 and new commands (\ref{c:a}), (\ref{c:s}) have weight
0. The weight of a computation is then the sum of the weights of all commands used in the computation. We also
define the weight of a configuration as the number of coins in the first $K+1$ glasses minus the number of coins
in glass $K+2$. Every computation $C$ of $\Sym(M_n)$ projects onto a computation $\pi(C)$ of $\Sym(M)$: we simply
forget the extra two glasses and the new commands. The weight of $C$ is equal to the length of $\pi(C)$. The
numbers of coins used in $C$ and $\pi(C)$ in the first $K$ glasses are the same, the number of coins in glass
$K+1$ in the last configuration $W$ of $C$  minus the number of coins in glass $K+2$ of $W$ is equal to the
weight of $C$.

Also any computation $C$ of $M$ lifts to (possibly infinitely many) computations $C_n$ of $M_n$, the weight of
each $C_n$ is the same as the length of $C$, and the number of coins used in the first $K$ glasses is the same.

Note that Lemma \ref{l:sym} still holds for $M_n$ even though $M_n$ is non-deterministic. It can be easily
established by using the projection $\pi$.

This implies that if $M$ is sym-universally halting, then for every configuration $W$ of $M_n$ the weights of all
computations $C$ without repeated configurations of $\Sym(M_n)$  are bounded, the number of coins in the first
$K$ glasses of $M_n$ used during any of these computations is bounded, and the weights of configurations
appearing in these computations are bounded.

Consider the semigroup $S(M_n)$. Every non-zero element $w$ in $S(M_n)$ is represented by a word of the form
$u(w)v(w)$ where $$u(w)=q_i^{\alpha_0}a_1^{l_1}\ldots a_K^{l_K}A_1^{\alpha_1}\ldots A_K^{\alpha_K},
v(w)=a_{K+1}^{l_{K+1}}a_{K+2}^{l_{K+2}}A_{K+1}^{\alpha_{K+1}}A_{K+2}^{\alpha_{K+2}}$$ where $\alpha_j\in \{0,1\},
l_j\ge 0$. Note that if two non-zero words $w$, $w'$ are equal in $S(M_n)$, then $u(w)$ and $u(w')$ are equal in
$S(M)$.

We claim that $S(M_n)$ is residually finite. Indeed, consider two words $w_1, w_2$ in the generators of $S(M_n)$
which are not equal in $S(M_n)$, $w_2$ does not divide $w_1$ in $S(M_n)$ (clearly $w_1$ and $w_2$ cannot divide
each other without being equal in $S(M_n)$).

Suppose first that $w_1$ does not contain a $q$-letter. Then consider the ideal $Q$ of $S(M_n)$ generated by all
$q$-letters. The inequality $w_1\ne w_2$ survives in the Rees factor-semigroup $S(M_n)/Q$. But in $S(M_n)/Q$
every element has finitely many divisors, hence $S(M_n)/C$ is residually finite by Lemma \ref{l:s}, and so we can
separate $w_1$ and $w_2$ by a homomorphism onto a finite semigroup.

Thus we can assume that $w_1$ starts with a $q$-letter $q_i$. Suppose that $u(w_1)\ne u(w_2)$ in $S(M)$. Adding
the relation $a_{K+1}^2=a_{K+1}, a_{K+2}^2=a_{K+2}^2$ to $S(M_n)$ we then obtain a new semigroup $\bar S(M_n)$
and a homomorphism $\phi\colon S(M_n)\to \bar S(M_n)$ which separates $w_1$ and $w_2$. In the semigroup $\bar
S(M_n)$, every non-zero element has finitely many divisors since it is true for $S(M)$ and the number of
different elements of the form $v(w)$ is finite. Hence $\bar S(M_n)$ is residually finite by Lemma \ref{l:s}.

Thus we can assume that $u(w_1)=u(w_2)$. Let
$$v(w_1)=a_{K+1}^{m_1}a_{K+2}^{m_2}A_{K+1}^{\beta_{K+1}}A_{K+2}^{\beta_{K+2}},\hbox{      }
D=\max\{|l_{K+1}-l_{K+2}|+1,|m_{K+1}-m_{K+2}|+1\}.$$ Let us add the relations $a_{K+1}^D=a_{K+1}^{2D},
a_{K+2}^D=a_{K+2}^{2D}$ to $S(M_n)$. Let $\tilde S(M_n)$ be the resulting semigroup, and $\psi\colon S(M_n)\to
\tilde S(M_n)$ be the corresponding homomorphism. Then it is easy to see that $\psi(w_1)\ne \psi(w_2)$. Since in
$\tilde S(M_n)$, every element has finite number of divisors (the same argument as for $\bar S(M_n)$), we can
again use Lemma \ref{l:s}.

The function $\rho(n)$ for the semigroup $S(M_n)$ is at least as large as the following function $\Psi(n)$
associated with the machine $M$: $\Psi(n)$ is the smallest number such that for every non-accepted input
configuration of $M$ of length $\le n$, the machine $M$ halts after at most $\Psi(n)$ steps (i.e. the {\em
co-time function} of $M$). Indeed let $c$ be an input configuration of length at most $n$ such that $M$ halts
after exactly $\Psi(n)$ steps starting at $c$. Suppose that the word $w(c)$ in $S(M_n)$ corresponding to the
configuration $u$ can be separated from 0 in a homomorphic image $E$ of $S(M_n)$ with at most $\Psi(n)-1$
elements. Then the images of $a_{K+1}$, $a_{K+2}$ in that semigroup satisfy $z^D=z^{2D}$ for some $D<T(n)$. Since
the halting computation has $>D$ steps, the letter $a_{K+1}$ occurs in $w(u)$ exactly once, and every command of
$M_n$ corresponding to a command of $M$ adds one coin in glass $K+1$, there exists a word $W$ which is equal to
$w(u)$ in $S(M_n)$ and which has the form $$q_ja_1^{l_1}\ldots a_k^{l_K}A_1\ldots A_Ka_{K+1}^DA_{K+1}A_{K+2}.$$
Modulo relations corresponding to the commands (\ref{c:a}), this word is equal to $$q_ja_1^{l_1}\ldots
a_k^{l_K}A_1\ldots A_Ka_{K+1}^{2D}A_{K+1}a_{K+2}^{D}A_{K+2}.$$ The image of the latter word in $E$ is equal to
$$q_ja_1^{l_1}\ldots a_k^{l_K}A_1\ldots A_Ka_{K+1}^{D}A_{K+1}a_{K+2}^{D}A_{K+2}$$ which, again modulo the
relations corresponding to the commands (\ref{c:a}), is equal to $$q_ja_1^{l_1}\ldots a_k^{l_K}A_1\ldots
A_KA_{K+1}A_{K+2}$$ which is equal to 0 by the relations corresponding to the commands (\ref{c:s}), a
contradiction.

Note that the co-time function of a Turing machine recognizing a recursive set can be larger than any given
recursive function. Indeed, after the machine halts without accepting, we can make it work as long as we like. It
remain to note that the co-time function of a Minsky machine simulating that Turing machine cannot be smaller.
\endproof

\section{Simulation of Minsky machines in solvable groups }\label{s:4}

Recall that a {\em variety} of algebraic structures is a class of all algebraic structures of a given type (signature) satisfying a given set of {\em identities} (also called {\em laws}). Equivalently, by a theorem of Birkhoff \cite{malcev} a variety is a class of algebraic structures closed under taking cartesian products, homomorphic images and substructures. Every variety contains free objects (called {\em relatively free} algebraic structures). One can define algebraic structures that are finitely presented in a variety as factor-structures by congruence relations generated by finite number of equalities. Every finitely presented algebraic structure which belongs to a variety $\mathcal V$ is finitely presented inside $\mathcal V$ but the converse is very rarely true. See \cite{KSs} for a survey of algorithmic problems for varieties of different algebraic structures (mostly semigroups, groups, associative and Lie algebras). In this section we concentrate on varieties of groups. The most well known varieties are the variety of Abelian groups $\mathcal A$ given by the identity $[x,y]=1$, the variety of nilpotent groups of class $c$, ${\mathcal N}_c$ given by the identity $[...[x_1,x_2],...,x_{c+1}]=1$, etc. The class of Abelian groups of finite exponent $d$, ${\mathcal A}_d$, is also a variety, given by two identities $[x,y]=1, x^d=1$.

If $\mathcal U$ and $\mathcal V$ are two varieties of groups then the class of groups consisting of extensions of groups from $\mathcal U$ by groups from $\mathcal V$ is again a variety (the {\em product} of $\mathcal U$ and $\mathcal V$) denoted by ${\mathcal U}{\mathcal V}$. The product of varieties is associative \cite{Neu}.
For example the variety of all solvable groups of class $c$ is the product of $c$ copies of the variety $\mathcal A$.
If $\mathcal V$ is a variety of groups, then ${\mathcal Z}{\mathcal V}$ is the variety consisting of all central extensions of groups from $\mathcal V$. For example ${\mathcal N}_2={\mathcal Z}{\mathcal A}$ and, more  generally,
${\mathcal N}_{c+1}={\mathcal Z}{\mathcal N}_c$ for every $c\ge 1$.

The problem of finding a finitely presented group with undecidable word problem, belonging to a proper variety of groups (i.e. satisfying a non-trivial identity) was formulated by Adian \cite{KT} and solved by the first author in \cite{Kh81}. The construction was simplified in the unpublished dissertation \cite{Kh90}. In this section, we shall modify the construction from \cite{Kh90} to construct residually finite finitely presented solvable groups with complicated word problem.

\subsection{The construction}
\label{s:construction}

Let $M$ be a Minsky machine with $K$ glasses and $N+1$ commands (numbered $0,...,N$). We are going to construct a
group $G(M)$ simulating $M$.   The group $G(M)$ will be in a sense similar to the semigroup $S(M)$ constructed
above. The main idea will be to replace the product by another operation and make sure that with respect to the
new operation the semigroup $S(M)$ ``embeds" into our group.

Thus the group will be generated by the $q$-letters which will be related to the letters $q_i$ from $S(M)$, and
also $a$-letters $a_1,...,a_K$, $A$-letters, $A_1,...,A_K$ and some extra $a$- and $A$-letters that help us
impose the necessary commutativity relations that, in particular, make the group solvable. The group we are going
to construct will be a semidirect product of the Abelian normal subgroup generated by the $q$-letters by the
semidirect product of an Abelian subgroup generated by $A$-letters and an Abelian subgroup generated by
$a$-letters. Thus we should have a way to ensure that in a subgroup generated by two sets of letters $Z\cup Y$,
the normal subgroup generated by $Z$ is Abelian. This is done with the help of the following lemma due to
Baumslag \cite{Baum} and Remeslennikov \cite{Rem}. In that lemma we denote $u^a=a\iv ua$ and $u^{a+b}=u^au^b$
(note that although $u^{a+b}$ is not necessarily equal to $u^{b+a}$, the equality will hold if the normal
subgroup generated by $u$ is Abelian, which is going to be the case every time we apply this lemma).

\begin{lemma}[\cite{Baum, Rem}] Suppose that a group $H$ is generated by three
sets $X, F=\{a_i\ |\ i=1,\ldots,m\}, F'=\{a'_i\ |\ i=1,\ldots,m\} $ such that

(1) The subgroup generated by $F\cup F'$ is Abelian;

(2) For every $a\in F$ and every $x\in X$ we have $x^{f(a)}= x^{a'}$ for some monic polynomial $f$ of $a$ which
has at least two terms (in all our applications $f(t)=t-1$);

(3) $[x_1^{a_1^{\alpha_1}\ldots a_m^{\alpha_m}},x_2]=1$,  for every $x_1,x_2\in X$, and every $\alpha_1,\ldots,
\alpha_m\in \{0,1,-1\}$.
\label{lBRG}

Then the normal subgroup generated by $X$ in the group $H=\la X\cup F\cup F'\ra$ is Abelian, and $H$ is
metabelian.
\end{lemma}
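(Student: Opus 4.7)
My plan is to reduce the conclusion to the single identity
\[
[x_1^w,x_2]=1\qquad\text{for all }x_1,x_2\in X,\ w\in A:=\langle F\cup F'\rangle.
\]
Granted this, set $M=\langle x^a:x\in X,\ a\in A\rangle\le N$, where $N=\langle X\rangle^H$ is the normal closure of $X$. The identity, together with the rearrangement $[x_1^a,x_2^b]=[x_1^{ab^{-1}},x_2]^b$, forces the listed generators of $M$ to pairwise commute, so $M$ is Abelian. Also $M$ is $A$-invariant by construction and $X$-invariant because $(x_1^a)^{x_2}=x_1^a$ by the identity; hence $M$ is normal in $H$, and since $M\supseteq X$ we get $M=N$. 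Finally $H/N$ is a quotient of the Abelian group $A$, so $H$ is metabelian.

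\textbf{Proof of the identity.} Since $A$ is Abelian, write $w=\prod_i a_i^{\alpha_i}\prod_j(a_j')^{\beta_j}$. With $f(t)=t-1$ as in the intended applications, relation~(2) becomes the literal group identity $x^{a_i'}=x^{a_i}\cdot x^{-1}$ in $H$; conjugating by $v\in A$ (which commutes with $a_i,a_i'$) gives the two equivalent forms
\[
x^{va_i'}=x^{va_i}\cdot(x^v)^{-1}\qquad\text{and}\qquad x^{va_i}=x^{va_i'}\cdot x^v.
\]
Combining these with the commutator identity $[YZ,x_2]=[Y,x_2]^Z\cdot[Z,x_2]$, I rewrite $[x^w,x_2]$ as a product of two commutators in which either some $\beta_{j_0}$ has dropped by one (first form) or some $|\alpha_{i_0}|$ has dropped by one (second form). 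The base case $|\alpha_i|\le 1$ for all $i$ and $\beta_j=0$ for all $j$ is precisely hypothesis~(3).

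\textbf{Main obstacle and its resolution.} The central difficulty is that each application of~(2) trades an $F$-letter for an $F'$-letter (or conversely), so the naive measure $\sum|\alpha_i|+\sum|\beta_j|$ is merely preserved by a single rewrite rather than strictly decreased. My plan is to stage the induction. Stage one establishes the identity for $w\in\langle F\rangle$ by induction on $\sum|\alpha_i|$ using the second form of~(2); although each reduction introduces an intermediate $a_i'$-factor into one sub-term, that sub-term has strictly smaller $F$-exponent, and applying the first form of~(2) back to it lets one recombine pieces that fall under~(3) and the current inductive hypothesis, closing the argument. Stage two extends the identity to all $w\in A$ by induction on $\sum|\beta_j|$ using the first form of~(2), the $F$-only sub-terms being now handled by stage one. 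The principal bookkeeping is to verify that the conjugates $[Y,x_2]^Z$ arising from $[YZ,x_2]=[Y,x_2]^Z[Z,x_2]$ remain within the inductive scope; this is ensured because the Abelianness of $M$ is built up progressively as the induction advances, so conjugation by $Z\in M$ acts trivially on commutators already proven trivial.
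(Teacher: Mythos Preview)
The paper does not prove this lemma; it is quoted from Baumslag and Remeslennikov with the citation \cite{Baum, Rem} and used as a black box. So there is no ``paper's own proof'' to compare against, and I will simply assess your argument on its own merits.

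Your reduction step is correct and cleanly stated: once $[x_1^{w},x_2]=1$ for all $w\in A$, the subgroup $M=\langle X^{A}\rangle$ is Abelian, normal, contains $X$, and $H/M$ is Abelian. You also restrict to $f(t)=t-1$; the lemma is stated for a general monic $f$ with at least two terms, so strictly speaking this is a special case, but it is the only case the paper uses.

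The genuine gap is in your ``Stage one''. You reduce $x_{1}^{w}=x_{1}^{va_{i}}$ via the second form of (2) to $x_{1}^{va_{i}'}\cdot x_{1}^{v}$; the second factor falls to the inductive hypothesis, but for the first you propose to ``apply the first form of (2) back to it''. With $u=v$ that first form reads $x_{1}^{va_{i}'}=x_{1}^{va_{i}}(x_{1}^{v})^{-1}=x_{1}^{w}(x_{1}^{v})^{-1}$, which reintroduces $x_{1}^{w}$ and is exactly circular. No other choice of $u$ breaks the loop, because each application of (2) trades one $a_{i}$ for one $a_{i}'$ on the \emph{same} conjugating word, so the two forms are literal inverses of each other at the level of exponents. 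Your paragraph on the ``main obstacle'' correctly diagnoses that $\sum|\alpha_{i}|+\sum|\beta_{j}|$ is merely preserved, but the proposed staging does not escape this.

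What actually closes the induction is to conjugate the \emph{already-known} commutator relations, not just to rewrite $x_{1}^{w}$. In the one-variable case, assuming $[x_{1}^{a^{k}},x_{2}]=1$ for $|k|\le n-1$: conjugating $[x_{1}^{a^{n-2}},x_{2}]=1$ by $a$ gives $[x_{1}^{a^{n-1}},x_{2}^{a}]=1$, whence $x_{1}^{a^{n-1}}$ centralizes $x_{2}^{a}x_{2}^{-1}=x_{2}^{a'}$. Now conjugating $[x_{1}^{a^{n-1}},x_{2}]=1$ by $a'$ and using $(x_{1}^{a^{n-1}})^{a'}=x_{1}^{a^{n}}(x_{1}^{a^{n-1}})^{-1}$ yields $[x_{1}^{a^{n}},x_{2}^{a'}]=1$; combining with $[x_{1}^{a^{n}},x_{2}^{a}]=1$ (conjugate the hypothesis by $a$) and $x_{2}=(x_{2}^{a'})^{-1}x_{2}^{a}$ gives $[x_{1}^{a^{n}},x_{2}]=1$. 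The multivariable case follows the same pattern. This is the missing idea in your Stage one.
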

If the elements $a_i$ and $a'_i$ and the set $X$ satisfy the conditions of Lemma \ref{lBRG} we will call $a'_i$,
$i=1,...,m$, are BR-conjoints to $a_i$ with respect to $X$ (and the polynomial $f$).

Consider the free commutative monoid generated by letters $A_0,...,A_K$. Let $U_0$ be the set of all divisors of
the element $A_0A_1 ... A_K$ in that monoid, and $U$ be the set of all symbols $q_jw$, $w\in U_0, j=0,...,N$.
Also fix a prime $p$ (say, $p=2$).

The generating set of our group $G=G(M)$ will consists of three subsets:

$$L_0=\{x_{u}, u\in U,i=0,...,N\};$$ $$L_1=\{A_i,i=0,...,K,\};$$ $$L_2=\{a_i,a_i',\tilde a_i, \tilde a_i',
i=1,\ldots, K\}.$$

We introduce notation for some subgroups of the group $G$. Denote $H_i=\la L_i\ra, i=0,1,2.$ Denote also
$$M_0=\{\tilde a_i,\tilde a_i', A_0, i=1,\ldots,K\},\ M_i=\{a_i,{a_i'},A_i\}, i=1,\ldots,K$$

The group $G(M)$ has the following set of defining relations:

G1. Relations saying that $H_0$ and $H_1$ are Abelian groups of exponent $p$, and $H_2$ is an Abelian group.

G2. Any $y\in M_i$, $z\in M_j$, $i\ne j\in \{0,...,K\}$, commute.

G3. For every $i=1,\ldots,K$, $(a_i')^{-1}$ is a BR-conjoint to $a_i^{-1} $ with respect to $\{A_i\}$ (and
polynomial $f(t)=t-1$).

G4. The elements of the set $\{(\tilde a_i')^{-1}, i=1,...,K\}$ are a BR-conjoints to elements of the set
$\{\tilde a_i^{-1},i=1,...,K\}$ with respect to $\{A_0\}$.

G5. a) If $u\in U$ does not contain $A_i$ for some $i=0,\ldots,K$, then $[x_{u},A_i]=x_{uA_i}$, $j=0,...,K$.

b)  For every $i=1,\ldots,K$, if $u$ does not contain $A_i$, then $x_u^{a_i-1}=x_u^{a_i'}$ (see notation before
Lemma \ref{lBRG},

c) For every $i=0,\ldots,K$, if $u$ contains $A_i$,  $z\in M_i$, then   $[x_{u},z]=1$.

G6. $x_{q_j}^{a_i}=x_{q_j}^{\tilde a_i}$, $x_{q_i}^{a_i'}=x_{q_i}^{\tilde a_i'}$, $j=0,...,N$, $i=1,\ldots,K$.

G7. $[x_{u}^z,x_{v}]=1, $ where $z=a_1^{\alpha_1}\ldots,a_K^{\alpha_K}, \alpha_i\in\{-1,0,1\}.$

\begin{remark} \label{r22} Relations G7 together with G1 and G5b) imply that for every subset $I\subseteq
\{1,\ldots, K\}$ the
letters $\{a_i', i\in I\}$ are BR-conjoints of $\{a_i,i\in I\}$ with respect to  the set of all $x_{u}$'s where
$u$ does not contain letters $A_i, i\in I$.
\end{remark}

G8. Relations constructed from the program of the machine $M$. For every $f \in G$ denote $$f*a_i=
f^{-1}f^{a_i}f^{-a_i^{-1}}f^{(a_i')^{-1}}, i=1,...,K,$$ also let $$f*A_i=[f,A_i], i=0,\ldots, K. $$ We denote
$(...(t_1*t_2)*...)*t_k$ by $t_1*\ldots*t_k$, and $t_1*\underbrace{t_2*...*t_2}_{n}$ by $t_1*t_2^{(n)}$. The
relations corresponding to the commands of $M$ are in the following table.

\begin{equation}
\label{eq:s7}
\begin{array}{|l|l|}
\hline
\hbox{Command of } M& \hbox{Relation of }G(M) \\
\hline
&\\ i\rightarrow \Add(n_1,\ldots,n_m);j & x_{q_iA_0}=x_{q_jA_0}*a_{n_1}*...*a_{n_m}\\
\hline
&\\ i,\epsilon_{n_1}>0,\ldots,\epsilon_{n_m}>0\to \Sub(n_1,\ldots,n_m);j & x_{q_iA_0}*a_{n_1}*\ldots *
a_{n_m}=x_{q_jA_0}\\
\hline
 &\\
i,\epsilon_{n_1}=0,\ldots,\epsilon_{n_m}=0\to j  & x_{q_iA_0}*A_{n_1}*\ldots * A_{n_m}\\ & =
x_{q_jA_0}*A_{n_1}*\ldots *A_{n_m}\\
\hline
\end{array}
\end{equation}

\begin{theorem} \label{tmm} (a) The group $G(M)$ belongs to ${\mathcal A}_p^2{\mathcal A}\cap
{\mathcal Z}{\mathcal
N}_{K+1}{\mathcal A}$.

(b) The equality $$x_{q_iA_0}*a_1^{(m_1)}*\ldots a_K^{(m_K)}*A_1^{(\alpha_1)}*\ldots
*A_K^{(\alpha_K)}=x_{q_jA_0}*a_1^{(n_1)}*\ldots*a_K^{(n_K)}*A_1^{(\beta_1)}*\ldots A_K^{(\beta_K)}$$ where
$\alpha_i,\beta_i\in \{0,1\}$ is true in $G(M)$ if and only if the equality $$q_ia_1^{m_1}\ldots
a_K^{m_K}A_1^{\alpha_1}\ldots A_K^{\alpha_K}=q_ja_1^{n_1}\ldots a_K^{m_K}A_1^{\beta_1}\ldots A_K^{\beta_K}$$ is
true in the semigroup $S(M)$ (in particular, $\alpha_i=\beta_i$ for every $i$).

(c) The equality $$x_{q_i}*a_1^{(m_1)}*\ldots a_K^{(m_K)}*A_1^{(\alpha_1)}*\ldots
*A_K^{(\alpha_K)}=x_{q_j}*a_1^{(n_1)}*\ldots*a_K^{(n_K)}*A_1^{(\beta_1)}*\ldots A_K^{(\beta_K)}$$ where
$\alpha_i,\beta_i\in \{0,1\}$ is true in $G(M)$ if and only if $m_i=n_i, \alpha_i=\beta_i$ for every $i$.
\end{theorem}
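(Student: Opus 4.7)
The plan is to handle the three parts separately: part (a) is a structural claim about $G = G(M)$ handled by two applications of Lemma \ref{lBRG} and a direct bound on a lower central series, while parts (b) and (c) are simulation statements handled in the forward direction by Lemma \ref{forward} applied to $S(M)$ and in the converse by a module-theoretic model of the abelian normal subgroup.

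For part (a), let $N$ be the normal closure of $L_0 = \{x_u\}$ in $G$. I would first apply Lemma \ref{lBRG} with $X = L_0$, $F = \{a_i, \tilde a_i : 1 \le i \le K\}$, $F' = \{a_i', \tilde a_i' : 1 \le i \le K\}$: condition (1) holds because $H_2$ is abelian (G1), condition (2) ($x_u^{a_i-1} = x_u^{a_i'}$) is G5b) and, for $\tilde a_i$, follows by combining G6 with G5b), and condition (3) is G7 via Remark \ref{r22}. Since $[x_u, A_i] = x_{uA_i} \in L_0 \cup \{1\}$ by G5a), $N$ is stable under conjugation by the $A_i$ as well, so $N$ is abelian; because its generators have exponent $p$, so does $N$. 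A second application of Lemma \ref{lBRG} inside $G/N$, with $X$ the image of $\{A_i\}$ and using G3, G4, shows that the normal closure $\bar M$ of the $A_i$'s in $G/N$ is abelian of exponent $p$, while $G/N\bar M$ is abelian as a quotient of $H_2$. Taking $C$ to be the preimage of $\bar M$ in $G$, the filtration $1 \subseteq N \subseteq C \subseteq G$ yields $G \in {\mathcal A}_p^2 {\mathcal A}$. For the nilpotency claim, G5a) and G5c) combined say that $[x_u, A_{i_1}, \ldots, A_{i_k}] = x_{u A_{i_1} \cdots A_{i_k}}$ when the $A_{i_j}$'s together with the $A$-content of $u$ are pairwise distinct, and vanishes otherwise; since every $u \in U$ contains at most $K+1$ distinct letters from $\{A_0, \ldots, A_K\}$, a bookkeeping argument gives $\gamma_{K+3}(C) = 1$, so $C \in {\mathcal N}_{K+2} = {\mathcal Z}{\mathcal N}_{K+1}$ and hence $G \in {\mathcal Z}{\mathcal N}_{K+1}{\mathcal A}$.

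For part (b), the forward direction is essentially automatic: the relations G8 are precisely the semigroup relations (\ref{eq:s2}) of $S(M)$ transcribed into the $*$-notation, so by Lemma \ref{forward} any sequence of semigroup relations converting $q_i a_1^{m_1} \cdots A_K^{\alpha_K}$ to $q_j a_1^{n_1} \cdots A_K^{\beta_K}$ in $S(M)$ lifts to a sequence of G8 applications in $G(M)$. For the converse both sides lie in the abelian $\mathbb F_p$-module $N$, on which $G/N$ acts by conjugation. I would construct a standard $G/N$-module $N_0$ as follows: take the $\mathbb F_p$-vector space with basis the non-zero elements of $S(M)$ (controlled by Lemma \ref{l:del}), equip it with the $G/N$-action forced by G5a)--c), and verify that every relation of $G(M)$ involving $L_0$ holds in $N_0$. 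This yields a surjection $N_0 \twoheadrightarrow N$. The identity $a_i \cdot x = x + a_i' \cdot x$ coming from G5b) lets one rewrite the $*$-operation additively in $N_0$, and matching the resulting additive operator against the Minsky-machine interpretation gives injectivity on the cyclic submodule generated by $x_{q_i A_0}$, which is what the theorem requires.

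Part (c) is proved by the same method but is easier: no relation of G8 mentions $x_{q_i}$ (only $x_{q_i A_0}$), so inside $N_0$ the cyclic submodule generated by $x_{q_i}$ is the free $\mathbb F_p[G/N]$-module on one generator subject only to G5a)--c), and distinct tuples $(m_i, \alpha_i)$ yield distinct $*$-products. The main obstacle is the injectivity step in (b): ruling out accidental identifications in $N$ not already present in $S(M)$. The cleanest route is to use the additive reformulation of the $*$-operation via G5b) to reduce the question to linear algebra in a concrete $\mathbb F_p$-module, and to use the semigroup-side counting of Lemma \ref{l:l9} to verify that the module dimensions match configuration by configuration.
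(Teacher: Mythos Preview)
Your overall architecture (structure lemmas for (a), forward direction via G8 mirroring the semigroup relations, converse via an explicit model) matches the paper's, but the converse for (b) and (c) has a real gap.

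You propose to take $N_0$ to be the $\mathbb F_p$-vector space on the non-zero elements of $S(M)$ and say the $G/N$-action is ``forced by G5a)--c)''. It is not. Relations G5 only tell you that $A_j$ acts by $z_u \mapsto z_u + z_{uA_j}$ and that $a_i' = a_i - 1$ as operators on the $x_u$ with $A_i \notin u$; they do \emph{not} tell you how $a_i$ itself acts. Moreover, any legitimate action must have both $a_i$ and $a_i'=a_i-1$ invertible (they are group elements of $G/N$), whereas the obvious ``multiply by $a_i$'' map on the semigroup basis is not invertible. Your $*$-computation $*a_i = -1 + a_i - a_i^{-1} + (a_i-1)^{-1}$ already shows the problem: you need $(a_i-1)^{-1}$ to make sense on $N_0$, and on your proposed basis it does not. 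The paper's fix is the essential content of the proof: it enlarges the basis to $\{z_{\vec i,u}: \vec i \in \{1,2,3\}^K,\; u\in W\cup W_0\}$, where the extra $\{1,2,3\}$-coordinates allow an explicit $3\times 3$ block action of each $a_j$ (formula~(\ref{ea})) for which both $a_j$ and $a_j-1$ are invertible, while still giving $z_{\vec 1,u}*a_j = z_{\vec 1, ua_j}$ (Lemma~\ref{l654}). The distinction between $W$ (words in the semigroup $\check S$ \emph{without} the Minsky relations) and $W_0$ (words in $S(M)$, with $A_0$ inserted) is what separates (b) from (c), and is absent from your single-basis proposal. The appeal to Lemma~\ref{l:l9} for ``dimension matching'' does not substitute for this; you need an explicit homomorphism $G\to \bar G$ in which the elements in question are visibly distinct. (Incidentally, if $N_0$ is to be a model, the map goes $N\twoheadrightarrow N_0$, not the reverse.)

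For (a) there is a smaller but genuine slip: you cannot apply Lemma~\ref{lBRG} with $X=L_0$ directly, because condition (2) fails for $x_u$ with $A_i\in u$ (there $x_u^{a_i-1}=1$ while $x_u^{a_i'}=x_u$), and G6 gives the $\tilde a_i$-relation only for $x_{q_j}$, not for general $x_u$. The paper instead uses G5a) to reduce every $x_u$ to products of $H_2$-conjugates of the $x_{q_j}$'s (computation~(\ref{e00})) and applies Remark~\ref{r22} on the appropriate subsets; the abelianness of $T$ then follows. Likewise, your nilpotency bookkeeping only treats commutators $[x_u,A_{i_1},\ldots]$, but elements of $C$ involve $A_i^{y}$ with $y\in H_2$; the paper handles these via Remark~\ref{r4} and works with $P=[G,G]$ rather than $C$.
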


\begin{proof} First we will prove part (a): $G(M)\in  {\mathcal A}_p^2{\mathcal A}\cap {\mathcal Z}{\mathcal N}_3{\mathcal A}$.

\begin{lemma}\label{lH} The subgroup $\la H_1\cup H_2\ra$ of $G$ is metabelian and a semidirect product of
the Abelian normal subgroup $H_1^{H_2}$ of exponent $p$, and $H_2$. \end{lemma}
\begin{proof}
Indeed by relations G2, $$\la M_i,i=0,...,K\ra=\prod_{i=1}^K\la a_i,a_i',A_i\ra\times \la\tilde a_i, \tilde a_i',
A_0, i=1,\ldots,K\ra.$$ Using relations G1, G3, G4, we can apply Lemma \ref{lBRG} to each of the factors in that
direct product and conclude that each of them is metabelian and a semidirect product of the Abelian of exponent
$p$ normal subgroup generated by the intersection of $\{A_i, i=0,\ldots, K\}$ with that factor, and the Abelian
group generated by the $a$-letters from that factor.\end{proof}

\begin{lemma}\label{lT}The normal subgroup $T$ of $G$ generated by all the elements $x_{u}, u\in U,$ is Abelian
of exponent $p$.
\end{lemma}
\begin{proof}  Relations G5 a) of the group $G$ imply that every element $x_u, u\in U$ is a product of elements
$x_{q_j}^z, z\in H_1, i=0,\ldots ,N.$ Therefore, it is enough to show that
\begin{equation}\label{ab}
x_{q_k}x_{q_t}^z=x_{q_t}^zx_{q_k}\end{equation} for any $z\in \la H_1,H_2\ra$ and any $k,t$. To reduce the proof
of these equalities to the proof of more simple equalities notice that $z=z_0z_1\ldots z_K$ where $z_i\in \la
M_i\ra$ by G2. Therefore equalities  (\ref{ab}) are equivalent to
\begin{equation}\label{ab1}
x_{q_k}^{z_0}x_{q_t}^{z_1\ldots z_K}=x_{q_t}^{z_1\ldots z_K}x_{q_k}^{z_0}.\end{equation} We can represent element
$x_{q_j}^{z_i}$, $i\ge 1$, as a product of elements of the form $x_{q_j}^{a_i^{p}{(a_i')}^{q}}$ and
$x_{q_jA_i}^{{\tilde a_i}^{p}{(\tilde a_i')}^{q}}.$ Indeed we have the following sequence of equalities deduced
using G2, G5, G6:

\begin{equation}\label{e00} \begin{array}{l} x_{q_j}^{a_i^{r_1}{(a_i')}^{s_1}A_i^{t_1}\ldots
a_i^{r_s}{(a_i')}^{s_k}A_i^{t_k}} \stackrel{\hbox{\tiny{G6}}}{=\!\!=\!\!=} x_{q_j}^{\tilde a^{r_1}
 {(\tilde a_i')}^{s_1}A^{t_1}\ldots a_i^{r_k}{(a_i')}^{s_k}A_i^{t_k}}\\ \\
 \stackrel{\hbox{\tiny G2}}{=\!\!=\!\!=}x_{q_j}^{A_i^{s_1}\tilde a_1^{r_1}
 {(\tilde a_i')}^{s_1}a_i^{r_2}{(a_i')}^{s_2}\ldots a_i^{r_k}{(a_i')}^{s_k}A_i^{t_k}}\\ \\
 \stackrel{\hbox{\tiny G5  a), c), G6}}{=\!\!=\!\!=\!\!=\!\!=\!\!=\!\!=}
 x_{q_j}^{a_i^{r_1+r_2}{(a_i')}^{s_1+s_2}A_i^{t_2}\ldots
 a_i^{r_k}{(a_i')}^{s_k}A_i^{t_k}}(x_{q_jA_i}^{t_1})^{\tilde
 a_i^{r_1}(\tilde a_i')^{s_1}}=\\ \\ \ldots =
 x_{q_j}^{a_i^{r_1+\ldots +r_k}{(a_i')}^{s_1+\ldots + s_k}}(x_{q_tA_i}^{t_{k}})^{\tilde a_i^{r_1+\ldots
 r_{k}}(\tilde
 a_i')^{s_1+\ldots s_{k}}}\ldots (x_{q_tA_i}^{t_1})^{\tilde a_i^{r_1}(\tilde
 a_i')^{s_1}}.\end{array}\end{equation}
Repeating this argument $K$ times, one proves that $x_{q_j}^{z_1z_2\ldots z_K}$ can be represented as a product
of elements of the form $x_u^y$ where $u\in U$, $y\in H_2$. A similar proof (using also G4) gives that
$x_{q_j}^{z_0}$ is a product of elements of that form. It remains to note that elements of the form $x_u^y$,
$u\in U, y\in H_2$ commute by Remark \ref{r22} and Lemma \ref{lBRG}.
\end{proof}

\begin{remark}\label{r4} Note that equalities (\ref{e00}) and similar equalities when $x_{q_j}$ is replaced by
$x_u$, $u\in U$, imply the following: if $y$ is a product of elements of the form $a_i^{r_l}(a_i')^{s_l}A_i$ and
$\sum_l r_l=\sum_l s_l=0$, then $[x_u,y]$ is 1 if $u$ contains $A_i$ or a product of conjugates of elements
$x_{uA_{i}}$ by elements from $\la \tilde a_i\ra\times \la \tilde a_i'\ra$ otherwise. Similarly, suppose that $y$
is a product of elements from $M_0$, each factor containing $A_0$,  and the total exponent of every $\tilde a_i$
(resp. $\tilde a_i'$) is 0. Then $[x_u,y]=1$ provided $u$ contains $A_0$ and is a product of conjugates of
$x_{uA_0}$ by elements from $\la  a_i, a_i'\ra$ provided $u$ does not contain $A_0$.
\end{remark}

By construction, the group $G$ is a semidirect product of $T$ and the metabelian group $H_1^{H_2}\rtimes H_2$. By
Lemma \ref{lH}, $G$ is solvable of class 3 and, moreover, belongs to ${\mathcal A}_p^2{\mathcal A}$.

\begin{remark}\label{r8} The proof of Lemma \ref{lT} shows that $T$ is generated (as an Abelian group) by
elements of the form $x_u^y$ where $u\in U$ and $y\in H_2$.
\end{remark}

\begin{lemma}\label{l9}  The quotient of $G(M)$ over the center satisfies the identity
$$[[x_1,y_1],[x_2,y_2],\ldots,[x_{K+2},y_{K+2}]]=1.$$ This means that $G$ belongs to the variety ${\mathcal
Z}{\mathcal N}_{K+1}{\mathcal A}$.
\end{lemma}
\begin{proof} Let $P$ be the derived subgroup of $G(M)$. By Lemma \ref{lT}, every element of $P$ is a product of
an element of $T$ and an element of $H_1^{H_2}$. It also follows from Lemma \ref{lT} that $[P,P]\subseteq T$,
hence by Remark \ref{r8}, it is generated by elements of the form $x_u^y$, $u\in U, y\in H_2$, the word $u$
contains at least one $A_i$, $i=0,\ldots,K$. Since $T$ is Abelian, the subgroup $\underbrace{[P,P,\ldots,
P]}_{K+2}$ is generated by the commutators $$[x_u^y,h_{1,1}^{h_{2,1}},...,h_{1,K}^{h_{2,K}}]$$ for some
$h_{1,i}\in H_1$, $y,h_{2,i}\in H_2$. An easy induction shows that every such commutator is a conjugate of
\begin{equation}\label{e7}
[x_u,h_{1,1}^{y'},...,h_{1,K}^{y'}]
\end{equation}
where $y'\in H_2$.

Let $h\in H_1$, $u\in U, y\in H_2$. Suppose that $h=A_{i_1}^{t_1}\cdot...\cdot A_{i_s}^{t_s}$ where $t_i\not=0$.
Consider $[x_u,h^y]$. Then Remark \ref{r4} implies that $[x_u,h^y]$ is a product of elements of the form
$x_{u'}^{y'}$ where $u'\in U$ contains letters $A_{i_1},...,A_{i_s}$ and it may not be equal to 1 only if one of
the letters $A_{i_j}$ does not occur in $u$. Therefore the commutator (\ref{e7}) is either equal to 1 or is a
product of elements of the form $x_{u'}^{y''}$ where the word $u'\in U$ contains all letters $A_0, A_1,...,A_K$,
$y''\in H_2$. But every such $x_{u'}$ is in the center of $G(M)$ by G5 c). Hence
$\underbrace{[P,\ldots,P]}_{K+2}$ is contained in the center of $G(M)$. \end{proof}

We now prove (b) and (c). For this, as we mentioned before Lemma \ref{forward}, we need to prove Lemmas
\ref{forward} and \ref{llemma2}. Lemma \ref{forward} for $G(M)$ is proved in the same way as for the semigroup
$S(M)$ (see \cite{Sapir,KSs},since the only property of $S(M)$ used there was that the word $w=q_ia_1^{l_1}\ldots
a_K^{l_K}A_1^{\alpha_1}\ldots A_K^{\alpha_K}$ is equal to any word obtained from $w$ by permuting $a_i$ with
$a_j$, $A_i$ with $A_j$ and $a_i$ with $A_j$ ($i\ne j$). The same is true for words of the form
\begin{equation}\label{e000} x_{q_iA_0}*a_1^{(m_1)}*\ldots a_K^{(m_K)}*A_1^{(\alpha_1)}*\ldots
*A_K^{(\alpha_K)}\end{equation} in $G(M)$ by
the definition of the operation $*$, relations G1, G2 and Lemma \ref{lT}.

In order to prove Lemma \ref{llemma2} we will define a new group $\bar G$ that is a quotient of $G$ and injective
on elements of the form (\ref{e000}).

Let $\check S$ be the semigroup with the same generating set as $S(M)$ subject all the relations of $S(M)$ except
the relations (\ref{eq:s2}) corresponding to the commands of $M$ (that semigroup does not depend on $M$). Thus
non-zero elements in $\check M$ have the form $$ q_i^{\alpha_1}a_1^{l_1}\ldots a_K^{l_K}A_1^{\alpha_1}\ldots
A_K^{\alpha_K} $$ where $l_j\in \N$, $\alpha_j\in \{0,1\}$. Let $W$ be the set of all non-zero elements of
$\check S$ containing a $q$-letter, and $W_0$ be the set of elements from $W$ viewed as elements of $S(M)$ (i.e.
different words may represent equal element) with $A_0$ inserted next to the $q$-letter. Consider the free
Abelian group $T_1$ of exponent $p$ generated by the elements $z_{i_1,\ldots,i_K,u}$, $u\in W\cup W_0$, $i_j\in
\{1,2,3\}$. For each element of $L_1\cup L_2$, we define an automorphism of $T_1$. The group $\bar G$ will be the
semidirect product of $T_1$ and the group generated by these automorphisms.

For simplicity we will denote automorphisms corresponding to letters from $L_1\cup L_2$ by the same letters.

Let us start with automorphisms $a_j,\ a_j'$. We have to define $z_{i_1,\ldots,i_K,u}^{a_i}$ and
$z_{i_1,\ldots,i_K,u}^{a'_i}$ for every $i_1,...,i_K$. First suppose that $u$ does not contain $A_j$. To simplify
the notation we shall denote the vector $(i_1,\ldots,i_K)$ by $\vec i$, and the standard unit vectors by $\vec
e_l, l=1,...,K$. We shall write $z_{\vec i, u}$ instead of $z_{i_1,\ldots, i_K, u}$. The $j$-th coordinate of
$\vec i$ is denoted by $\vec i_j$.

\begin{equation}\label{ea}
z_{\vec i,u}^{a_j}  = \left\{\begin{array}{ll} z_{\vec i,u}z_{\vec i+\vec e_j,u}z_{\vec i+2\vec e_j,u}z_{\vec
i,ua} & \hbox{ if } \vec i_j=1;\\ z_{\vec i,u}z_{\vec i-\vec e_j,u}^{-1} & \hbox{ if } \vec i_j=2;\\ z_{\vec
i-2\vec e_j,u}& \hbox{ if } \vec i_j=3.
\end{array}\right.
\end{equation}
$$ z_{\vec i,u}^{a_j'} = z_{\vec i,u}^{-1}z_{\vec i,u}^{a_j}. $$

\vskip 0.1 in

If $u$ contains letter $A_j$, then let $z_{\vec i,u}^{a_j}=z_{\vec i,u}^{a_j'}=z_{\vec i,u}.$

\vskip 0.1 in

It is easy to prove that $a_j$ is an automorphism by constructing the automorphism $a_j\iv$. If we apply
$a_j^{-1}$ to the third equality in (\ref{ea}), we will obtain the formula for $z_{\vec i,u}^{a_j\iv}$ provided
$\vec i_j=1$ (and $u$ does not contain $A_j$). Plugging it in the second equality of (\ref{ea}) we obtain the
formula for $z_{\vec i,u}^{a_j\iv}$ provided $\vec i_j=2$. Finally plugging it in the first equality in
(\ref{ea}), we obtain the formula for  $z_{\vec i,u}^{a_j\iv}$ provided $\vec i_j=3$:

$$ x_{\vec i, u}^{a\iv}=
\left\{\begin{array}{l}
x_{\vec i-\vec e_j,u}^{-1}x_{\vec i,u}^{-1}x_{\vec i,ua}^{-1}, \hbox{ if }i_j=3\\
\\
x_{\vec i,u}x_{\vec i+\vec e_j,u},\hbox{ if } i_j=2\\
\\
x_{\vec i+2\vec e_j,u}, \hbox{ if } i_j=1.
\end{array}\right.
$$

The automorphism $\tilde a_j$ is defined similarly. If $u$ contains $A_0$, then $z_{\vec i,u}^{\tilde
a_j}=z_{\vec i, u}$. If $u$ does not contain $A_0$ and $A_j$ then $z_{\vec i,u}^{\tilde a_j}=z_{\vec
i,u}^{a_j}.$

If $u$ does not contain $A_0$ but contains $A_j$, i.e. $u=vA_j$ for some $v$, then $$ z_{\vec i,u}^{\tilde a_j}
=\left\{\begin{array}{ll} z_{\vec i,u}z_{\vec i+\vec e_j,u}z_{\vec i+2\vec e_j,u}z_{\vec i,va_jA_j}, & \hbox{ if
}
\vec i_j=1;\\ z_{\vec i,u}z_{\vec i-\vec e_j,u}^{-1}, & \hbox{ if } \vec i_j=2;\\
z_{\vec i-2\vec e_j,u},& \hbox{ if }\vec i_j=3.\end{array}\right.\\ $$ $$ z_{\vec i,u}^{\tilde a_j'} = z_{\vec
i,u}^{-1} z_{\vec i,u}^{\tilde a_j}. $$

Finally the automorphisms corresponding to $A_j$, $j=0,...,K$, are defined as follows: $$z_{\vec
i,u}^{A_j}=z_{\vec i,u}z_{\vec i,uA_j}$$ if $u$ does not contain $A_j$ and $$z_{\vec i,u}^{A_j}=z_{\vec i,u}$$ if
$u$ contains $A_j$.

The following lemma is obtained by a straightforward application of the definition of the automorphisms above and
the definition of the operation $*$.  This lemma implies that $\bar G$ satisfies G8 if we replace $x_u$ by
$z_{\vec 1, u}$ (since the corresponding relations hold in $S(M)$).

\begin{lemma}\label{l654} The following relations hold in $\bar G$. For every $w\in \{a_j, A_j, j=1,...,K\}$

$$ z_{\vec 1,u}*w=z_{\vec 1,uw} $$
 where we set $z_{\vec 1,0}=1$ (the identity element in $G(M)$) where $*$ is defined in G8.
\end{lemma}

We define $\bar G$ as the semidirect product of $T_1$ and the subgroup of $\Aut(T_1)$ generated by the
automorphisms corresponding to the elements from $L_1\cup L_2$. From the definition of the automorphisms and
Lemma \ref{l654}, it follows that $\bar G$ is generated by the elements $z_{\vec 1,u}$, $u\in U$, where $\vec 1$
is the vector $(1,1,\ldots,1)$ and the automorphisms corresponding to elements of $L_1\cup L_2$. It is easy  to
check that all the relations G1-G8 hold in $\bar G$, therefore
\begin{lemma}\label{l789} The map that sends every $a$- or $A$-letter to itself and every $x_{u}$ to $z_{\vec
1,u}$ extends to a homomorphism $\phi$ from $G$ to $\bar G$. \end{lemma}

\begin{lemma}\label{lphi} The homomorphism $\phi$ is surjective.
\end{lemma}

\proof It is easy to see that we only need to define pre-images $x_{\vec i, w}$ of elements $z_{\vec i,u}\in
\bar
G$, $w\in W\cup W_0$. By the definition of $\phi$, we have $\phi(x_u)=z_{\vec 1,u}$ for every $u\in U$ so we
define $x_{\vec 1, u}=x_{u}$. The other preimages are defined by induction  on the length of $w$ and the sum of
$\vec i_j$.

Suppose $w\in W\cup W_0$ does not contain $A_j$ and $\vec i_j=1$, $\vec i'$ is arbitrary. Then we define:
$$\begin{array}{l}x_{\vec i+\vec e_j,w}=x_{\vec i,w}^{-(a_j')^{-1}},\\ x_{\vec i+2\vec e_j,w}=x_{\vec i,
w}^{a_j^{-1}},\\ x_{\vec i',w}*a_j=x_{\vec i',wa_j}.\end{array}$$ We also have $x_{\vec i,w}*A_j=x_{\vec i,wA_j}$
for any $\vec i.$

It is easy to see that for every $\vec i$ and $w\in W\cup W_0$, we have $\phi(x_{\vec i, w})=z_{\vec i, w}$. This
proves the lemma.
\endproof

In $\bar G(M)$, consider the set $P$ of elements
\begin{equation}\label{e:p}z_{\vec 1,q_iA_0}*a_1^{(m_1)}*\ldots a_K^{(m_K)}*A_1^{(\alpha_1)}*\ldots
*A_K^{(\alpha_K)}
\end{equation}
where $\alpha_i\in\{0,1\}$ and the set $P_0$ of elements
\begin{equation}\label{e:p0}z_{\vec 1,q_i}*a_1^{(m_1)}*\ldots a_K^{(m_K)}*A_1^{(\alpha_1)}*\ldots
*A_K^{(\alpha_K)}
\end{equation}
By construction $P\cap P_0=\emptyset$, elements (\ref{e:p}) are different if and only if elements
$$q_ia_1^{m_1}\ldots a_K^{m_k}A_1^{\alpha_1}\ldots A_K^{\alpha_K}$$ from $S(M)$ are different,  and elements
$(\ref{e:p0})$ are different if and only if the corresponding elements $q_ia_1^{m_1}\ldots
a_K^{m_k}A_1^{\alpha_1}\ldots A_K^{\alpha_K}$ of $\check S$ are different. This completes the proof of Lemma
\ref{llemma2}and Theorem \ref{tmm} (b), (c).
\end{proof}

We shall need a few more properties of the group $G(M)$.

\begin{lemma}\label{l098} Let elements $x_{\vec i,w}, w\in W\cup W_0$ from $G$ be defined as in the proof of
Lemma \ref{lphi}. Let $y\in L_1\cup L_2$, $w\in W\cap W_0$. Then for every $i\in \{1,2,3\}^{\{1,\ldots,K\}}$,
$x_{\vec i,u}^y$ satisfies the same equalities as elements $z_{\vec i,w}^y$ from the definition of automorphism
of $\bar G$ with $z$ replaced by $x$ everywhere. In particular, $x_{\vec i,u}^y$ is a product of one or several
elements of the form $x_{\vec i',w'}$ such that every letter $a_j$ occurs in $w'$ at least as many times as in
$w$ (in particular if for some $R>0$, $w$ belongs to the ideal $V_R$ defined in Lemma \ref{l:fd1}, then $w'\in
V_R$.
\end{lemma}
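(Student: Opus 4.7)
The plan is to prove the two claims simultaneously by induction on $\sum_j \vec i_j$ together with the inductive definition of $x_{\vec i, w}$ given in the proof of Lemma \ref{lphi}, working entirely inside $G$ using relations G1--G7 and the equalities of the form (\ref{e00}); no use of $\bar G$ is required. The base case is $\vec i = \vec 1$, where $x_{\vec 1, u} = x_u$ for $u \in U$, and $x_{\vec 1, w}$ for longer $w$ was defined by $x_{\vec 1, u} * a_j = x_{\vec 1, u a_j}$ and $x_{\vec 1, u} * A_j = x_{\vec 1, u A_j}$, which already matches the defining rule for the automorphism of $\bar G$ after replacing $z$ by $x$.

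For the inductive step I would verify each of the action formulas in (\ref{ea}) (and their analogues for $a_j', A_j, \tilde a_j, \tilde a_j'$) one case at a time. The action of $A_j$ is immediate from G5a) and G5c). For the action of $a_j$ when $\vec i_j = 1$, the formula $x_{\vec i, w}^{a_j} = x_{\vec i, w}\, x_{\vec i + \vec e_j, w}\, x_{\vec i + 2\vec e_j, w}\, x_{\vec i, w a_j}$ follows by unwinding the definition $f * a_j = f^{-1} f^{a_j} f^{-a_j^{-1}} f^{(a_j')^{-1}}$, rewriting $f^{a_j}$ using G5b) in the form $f \cdot f^{a_j'}$ (valid because $T$ is Abelian by Lemma \ref{lT}), and then identifying the remaining conjugates with $x_{\vec i + \vec e_j, w}$ and $x_{\vec i + 2\vec e_j, w}$ by their inductive definitions. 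For $\vec i_j = 2$, write $x_{\vec i, w} = x_{\vec i - \vec e_j, w}^{-(a_j')^{-1}}$, compute $x_{\vec i, w}^{a_j} = x_{\vec i - \vec e_j, w}^{-(a_j')^{-1} a_j}$, use commutativity of $a_j, a_j'$ in $H_2$ (G1) together with G5b) to telescope, and collect into $x_{\vec i, w}\, x_{\vec i - \vec e_j, w}^{-1}$. The case $\vec i_j = 3$ is dual to the previous one and uses the same telescoping.

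For $a_l$ with $l \ne j$ the proof reduces to the previous cases: by G2 every letter of $M_l$ commutes with every letter of $M_j$, so $a_l$ commutes with all the $a_j, a_j'$ that appear in the definitional expression of $x_{\vec i, w}$, and conjugation by $a_l$ therefore propagates to the ``innermost'' generator $x_u$, whose behavior under $a_l$ is already known. For $\tilde a_j$ and $\tilde a_j'$ one uses G6 to identify $x_{q_i}^{a_j}$ with $x_{q_i}^{\tilde a_j}$ and then invokes the computation (\ref{e00}) together with Remark \ref{r4}, distinguishing whether $w$ contains $A_0$ (in which case $\tilde a_j$ acts like $a_j$) or not (in which case the formula with $v a_j A_j$ replacing $u a_j$ appears). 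I expect the main obstacle to be clean bookkeeping of the telescoping cancellations in the $\vec i_j \in \{2,3\}$ cases, where several conjugates of $x$-letters must be collected in the right order; this is what Lemma \ref{lT} (Abelianness of $T$) is designed to make painless.

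Once all these formulas are established in $G$, the second assertion is immediate by inspection: every $x_{\vec i, w}^y$ is expressed as a product of elements $x_{\vec i', w'}$ in which $w'$ is either $w$ itself or $w$ with exactly one letter $a_j$ (respectively $A_j$) appended. Hence the multiset of $a_j$-letters of $w'$ dominates that of $w$, so in particular if $w \in V_R$ then no $w'$ can escape $V_R$ because $V_R$ is closed under appending letters (being the complement of the set of divisors of bounded length configurations in the sense of Lemma \ref{l:fd1}).
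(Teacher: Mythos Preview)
Your plan is essentially the paper's own approach, spelled out in much more detail: the paper's proof says only that for $y\in M_j$, $j\ge 1$, the claim ``follows from the way $x_{\vec i,u}$ are constructed'', and for $y\in M_0$ one needs G2, G5c), and G6. Your unwinding of the $*$-definition and the telescoping via the BR relation $x^{a_j-1}=x^{a_j'}$ is exactly what ``follows from the construction'' means, and your appeal to G6 and the computation (\ref{e00}) for $\tilde a_j,\tilde a_j'$ is exactly the paper's second sentence.

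One slip to fix: your case split for $\tilde a_j$ is garbled. In $\bar G$ the action of $\tilde a_j$ on $z_{\vec i,u}$ is \emph{trivial} when $u$ contains $A_0$ (by G5c), since $\tilde a_j\in M_0$), it coincides with the action of $a_j$ when $u$ contains neither $A_0$ nor $A_j$ (this is where G6 is used), and the $va_jA_j$ formula appears when $u$ contains $A_j$ but not $A_0$. You have the first two cases swapped. This does not affect the strategy, but if you carry out the verification as written you will not get the right formulas.
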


\proof For $y\in \cup M_j, j\ge 1,$ this follows from the way $x_{\vec i,u}$ are constructed. For $y\in M_0$,
one
needs to use G2, G5 c), and G6.
\endproof

The proof of Lemma \ref{l098} actually gives the following

\begin{lemma}\label{l099} If $v$ is a word in $a$- and $A$-letters (i.e. over $L_1\cup L_2$), then $x_{\vec i,
u}^v$ is a product in $G$ of elements $x_{\vec j, w} $ as in Lemma \ref{l098} where the length of each $w$ does
not exceed the length of $v$ (hence the total number of different $x_{\vec j, w}$ occurring in this product is
polynomial in terms of $|v|$.
\end{lemma}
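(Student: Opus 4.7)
The plan is to prove Lemma \ref{l099} by induction on $|v|$, using Lemma \ref{l098} as the one-letter step. The base case $|v|=0$ is immediate: $x_{\vec i,u}^v=x_{\vec i,u}$ is itself a single term of the required form. For the inductive step I would write $v=v'y$ with $y\in L_1\cup L_2$ a single letter, so $x_{\vec i,u}^v=(x_{\vec i,u}^{v'})^y$. By induction $x_{\vec i,u}^{v'}$ is a product of elements $x_{\vec j',w'}$ with $|w'|\le |u|+|v'|$. Applying Lemma \ref{l098} to each factor, $(x_{\vec j',w'})^y$ becomes a product of terms $x_{\vec j'',w''}$ governed by exactly the same formulas that define the automorphisms on the $z_{\vec i,u}$ in $\bar G$.

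The key observation, obtained by inspection of the piecewise formulas \eqref{ea} and their analogues for $a_j'$, $\tilde a_j$, $\tilde a_j'$, and $A_j$, is that every $w''$ appearing on the right-hand side is either equal to $w'$ or obtained from $w'$ by appending a single letter from $\{a_1,\dots,a_K,A_0,\dots,A_K\}$ (for example, the terms $z_{\vec i,ua_j}$ and $z_{\vec i,uA_j}$ in (\ref{ea})). Consequently $|w''|\le |w'|+1$, and after the full word $v$ is processed we have $|w|\le |u|+|v|$ for every term in the resulting product; since $|u|$ is a constant (the lemma is applied to $x_{\vec i,u}$ with $u$ of fixed small form), this gives $|w|=O(|v|)$.

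For the final counting statement I would observe that every $w\in W\cup W_0$ has the canonical form $q_i^{\alpha_0}a_1^{l_1}\cdots a_K^{l_K}A_1^{\alpha_1}\cdots A_K^{\alpha_K}$ (with an optional $A_0$ inserted after the $q$-letter and $\alpha_j\in\{0,1\}$), so the words in this set of length at most $O(|v|)$ are parameterized by the tuples $(i,\alpha_0,\ldots,\alpha_K,l_1,\ldots,l_K)$ with $\sum l_j=O(|v|)$. Their number is $O(|v|^K)$. Multiplying by the constant $3^K$ accounting for the possible vectors $\vec j\in\{1,2,3\}^K$ yields a polynomial bound on the number of distinct $x_{\vec j,w}$ that can occur.

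The only real obstacle is bookkeeping: one must verify case by case that none of the piecewise formulas for $a_j$, $a_j^{-1}$, $a_j'$, $\tilde a_j$, $\tilde a_j'$, $A_j$ produces a term $x_{\vec j'',w''}$ with $|w''|>|w'|+1$. Most cases leave $w'$ untouched; only those cases marked with ``$\vec i_j=1$'' or with $u$ not containing $A_j$ (resp.\ $A_0$) extend $w'$, and each does so by a single letter. This verification is tedious but mechanical.
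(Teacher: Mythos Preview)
Your proposal is correct and is essentially the approach the paper has in mind: the paper gives no separate proof of Lemma~\ref{l099} at all, stating only that ``the proof of Lemma~\ref{l098} actually gives the following'' before the lemma. Your induction on $|v|$ with Lemma~\ref{l098} as the one-letter step, together with the inspection that each automorphism formula lengthens the index word by at most one, is exactly how one unpacks that remark; you have also made explicit the counting argument for the polynomial bound, which the paper leaves implicit.
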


\begin{lemma}\label{l097} The normal subgroup $T$ generated by the elements $x_{u}, u\in U$  in $G=G(M)$ is the
direct product of cyclic subgroups generated by the elements $x_{\vec i,w},\vec i\in \{1,2,3\}^{\{1,\ldots,K\}},
w\in W\cup W_0.$
\end{lemma}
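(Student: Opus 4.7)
The plan is to combine the generating-set information already established for $T$ with the homomorphism $\phi\colon G\to \bar G$ from Lemma \ref{l789} to transport the evident direct-product structure on $T_1\le \bar G$ back to $G$.

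First I would show that the elements $x_{\vec i, w}$, $\vec i\in\{1,2,3\}^{\{1,\ldots,K\}}$, $w\in W\cup W_0$, generate $T$ as an Abelian group. By Lemma \ref{lT} the subgroup $T$ is Abelian of exponent $p$, and by Remark \ref{r8} it is generated by elements of the form $x_u^y$ with $u\in U$ and $y\in H_2$. Write $y$ as a product of elements from $M_0,M_1,\ldots,M_K$ using relation G2, and then apply Lemma \ref{l098} (together with the inductive definition of the $x_{\vec i, w}$ in the proof of Lemma \ref{lphi}) to rewrite $x_u^y$ as a product of the elements $x_{\vec i, w}$. Thus $T$ is generated, as an Abelian group of exponent $p$, by $\{x_{\vec i, w}\}$.

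Next I would prove that these generators are independent, i.e.\ that any relation $\prod x_{\vec i, w}^{\epsilon_{\vec i,w}}=1$ in $T$ with $\epsilon_{\vec i,w}\in\{0,1,\ldots,p-1\}$ forces all exponents to vanish. Here the key observation is that, by the construction in Lemma \ref{lphi}, the homomorphism $\phi$ sends $x_{\vec i, w}$ to $z_{\vec i, w}$ for every $\vec i$ and every $w\in W\cup W_0$. Indeed, the inductive formulas defining $x_{\vec i, w}$ mirror the formulas defining $z_{\vec i, w}$, and by Lemma \ref{l098} the action of the $a$- and $A$-letters on $x_{\vec i,w}$ agrees with their action on $z_{\vec i,w}$ under $\phi$. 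Since $T_1\le \bar G$ is by definition the \emph{free} Abelian group of exponent $p$ on the symbols $z_{\vec i, u}$, the image $\phi\bigl(\prod x_{\vec i, w}^{\epsilon_{\vec i,w}}\bigr)=\prod z_{\vec i, w}^{\epsilon_{\vec i,w}}$ can equal $1$ only if all exponents $\epsilon_{\vec i,w}$ are zero.

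Combining the two steps, $T$ is generated by the family $\{x_{\vec i, w}\}$ and no non-trivial word in these generators equals $1$; hence $T$ is the direct product of the cyclic subgroups $\la x_{\vec i, w}\ra$ (each of order $p$, since $T$ has exponent $p$). I expect the only delicate point to be bookkeeping in the first step: verifying that the reduction of $x_u^y$ to a product of the $x_{\vec i, w}$ really does terminate and stays inside the specified indexing set $W\cup W_0$. This is handled by induction on the length of $y$, following the same pattern as the chain of equalities (\ref{e00}) used in the proof of Lemma \ref{lT}, together with relation G6 and Lemma \ref{l098} to keep track of which word $w$ (with or without $A_0$) is produced at each step.
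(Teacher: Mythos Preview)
Your proposal is correct and follows essentially the same two-step approach as the paper: first use Lemma \ref{l098} (together with Remark \ref{r8}) to see that the $x_{\vec i,w}$ span $T$, then use the homomorphism $\phi$ from Lemma \ref{l789}/\ref{lphi}, which by construction sends $x_{\vec i,w}$ to the free basis element $z_{\vec i,w}$ of $T_1$, to deduce linear independence. The paper's proof is the same argument, only stated more tersely.
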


\begin{proof} By Lemma \ref{l098} elements $x_{\vec i,w}$ span $T$.  We defined elements $x_{\vec i,w}, w\in W$
in such a way
that they are  pre-images of the corresponding elements  in $\bar G$ under $\phi$. Thus the elements $x_{\vec
i,w}, \vec i\in
\{1,2,3\}^{\{1,\ldots,K\}}, w\in W\cup W_0$ are linearly independent since their images under $\phi$ are
linearly
independent in $T_1$.
\end{proof}

\begin{lemma}\label{l096} Let $R>0$,  $V_R$ be the ideal of the semigroup $S(M)$ defined in Lemma \ref{l:fd1}.
Then the subgroup $T(V_R)$ of $T$ spanned by all the elements $x_{\vec i,w}$, $w\in V_R, \vec i\in
\{1,2,3\}^{\{1,\ldots,K\}}$ is normal in $G(M)$ (as before, we set $x_{\vec i, 0}=1$) and of finite index in
$T$.
\end{lemma}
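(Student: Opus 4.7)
The plan is to prove the two assertions separately: normality of $T(V_R)$ in $G(M)$, and finiteness of $[T:T(V_R)]$. Since $T$ is abelian by Lemma \ref{lT} and decomposes as a direct product $\bigoplus_{\vec i, w}\la x_{\vec i,w}\ra$ with each summand of order $p$ (Lemma \ref{l097} together with G1), both claims reduce to combinatorial statements about the index set $\{1,2,3\}^K\times(W\cup W_0)$.

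For normality, note that $G(M)$ is generated by $L_0\cup L_1\cup L_2$. The generators in $L_0$ lie in $T$, and since $T$ is abelian they act trivially on $T$ by conjugation, so it suffices to check that each $y\in L_1\cup L_2$ preserves $T(V_R)$ under conjugation. This is exactly the content of Lemma \ref{l098} together with its parenthetical remark: for any $x_{\vec i,w}$ with $w\in V_R$ the conjugate $x_{\vec i,w}^y$ is a product of elements $x_{\vec j,w'}$ whose index words $w'$ contain each letter $a_i$ at least as often as $w$, and hence also lie in $V_R$, so $x_{\vec i,w}^y\in T(V_R)$.

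For the finite index, the quotient $T/T(V_R)$ is a direct sum of copies of $\Z/p\Z$ indexed by pairs $(\vec i,w)$ for which $w$ projects to an element of $S(M)\setminus V_R$. The first factor $\{1,2,3\}^K$ is finite, and by Lemma \ref{l:fd1} the set $S(M)\setminus V_R$ is also finite, so the remaining task is to check that each non-zero element $s\in S(M)$ has only finitely many preimages in $W\cup W_0$; elements of $W\cup W_0$ whose image is $0$ automatically lie in $V_R$ since $0$ belongs to every ideal. Two tuples in $W\cup W_0$ project to the same non-zero element of $S(M)$ precisely when their underlying Minsky configurations are $\equiv_M$-equivalent, so the key step, and where I expect the real work, is to show that such an equivalence class is finite. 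I would handle it by adapting the argument in the proof of Theorem \ref{t:sym}: apply Lemma \ref{l:sym} to write any $\Sym(M)$-computation between two equivalent configurations as a forward $M$-computation followed by the reverse of another $M$-computation, and then invoke the sym-universal halting of $M$ together with property (c) of Theorem \ref{t:MM} to bound both pieces uniformly. Accepted configurations do not arise here because the stop relation $q_0=0$ forces them to map to $0$ in $S(M)$, so sym-universal halting gives the required finiteness directly, and $T/T(V_R)$ is therefore a finite direct sum of copies of $\Z/p\Z$.
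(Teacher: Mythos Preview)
Your approach is essentially the paper's: normality via Lemma~\ref{l098}, finite index via the finiteness of $S(M)\setminus V_R$ from Lemma~\ref{l:fd1}. The paper's own argument for finite index is a single sentence naming $\{x_{\vec i,w}:\vec i\in\{1,2,3\}^K,\ w\in S(M)\setminus V_R\}$ as a set of coset representatives, which glosses over the fact that the basis of $T$ in Lemma~\ref{l097} is indexed by $W\cup W_0$ rather than by $S(M)$; you have correctly identified that one must also check that each element of $S(M)\setminus V_R$ has only finitely many preimages in $W\cup W_0$.

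Your handling of that last step works in outline but is more circuitous than needed, and one statement is inaccurate: not every element of $W$ corresponds to a Minsky configuration, since $W$ contains words missing some of the letters $A_j$ (these are exactly the words treated by Lemma~\ref{l:l9}), so the phrase ``their underlying Minsky configurations are $\equiv_M$-equivalent'' does not literally apply. Rather than re-deriving the finiteness via Lemma~\ref{l:sym}, the proof of Theorem~\ref{t:sym}, and Theorem~\ref{t:MM}(c), you can simply cite Lemma~\ref{l:fd}(a): its proof (combining Lemmas~\ref{l:sym}, \ref{forward}, \ref{llemma2} for full configuration words and Lemma~\ref{l:l9} for the rest) already establishes that under sym-universal halting each non-zero element of $S(M)$ is represented by only finitely many words, hence has finitely many preimages in $\check S$ and therefore in $W$. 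Since $W_0$ is by construction indexed by $S(M)$-classes, no further argument is needed there. You are also right that sym-universal halting, while not explicit in the statement of Lemma~\ref{l096}, is required here and is implicitly inherited from Lemma~\ref{l:fd1}.
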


\proof The first part follows from Lemma \ref{l098}. If $\{w_1,...,w_M\}$ is the set $S(M)\setminus V_R$, then
$\{x_{\vec i, w}, \vec i\in \{1,2,3\}^{\{1,\ldots,K\}}, w\in \{w_1,...,w_M\}\}$ is a set of representatives of all
cosets of $T(V_R)$ in $T$.
\endproof

\subsection{A finitely presented solvable group with undecidable word problem}

By Theorem \ref{t:MM}, there exists a 2-glass Minsky machine which computes a non-recursive partial function. The
corresponding group $G(M)$ has undecidable word problem and belongs to the variety ${\mathcal A}_p^2{\mathcal A}\cap
\mathcal{ZN}_3\mathcal{A}$ by Theorem \ref{tmm}. Hence we obtain the following:

\begin{theorem} [Kharlampovich  \cite{Kh81}] There exists a finitely presented group with undecidable word
problem that belongs to the variety ${\mathcal A}_p^2{\mathcal A}\cap
\mathcal{ZN}_3\mathcal{A}$.
\end{theorem}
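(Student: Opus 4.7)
The plan is to assemble this statement as a direct consequence of the construction and results developed in Section~\ref{s:4}, specializing to the case $K=2$ and choosing the Minsky machine carefully. First I would fix a recursively enumerable non-recursive set $X \subseteq \mathbb{N}$ (which exists classically). By Theorem~\ref{t:MM}(a), there is a deterministic $2$-glass Minsky machine $M$ such that the input configuration $(1;2^m,0)$ is sym-equivalent to the stop configuration $(0;0,0)$ if and only if $m \in X$. In particular, the equivalence problem $\equiv_M$ restricted to these configurations is undecidable.

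Next I would form the group $G = G(M)$ according to the construction of Section~\ref{s:construction} with this choice of $M$ (so $K=2$). This group is finitely presented by definition, with the generating set $L_0 \cup L_1 \cup L_2$ and the explicit finite list of relations G1--G8. Applying Theorem~\ref{tmm}(a) with $K=2$ immediately places $G(M)$ in the variety
\[
{\mathcal A}_p^{2}{\mathcal A} \;\cap\; \mathcal{Z}\mathcal{N}_{3}\mathcal{A},
\]
which is the required variety.

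It remains to reduce the word problem in $G(M)$ to the equivalence problem for $M$. By Theorem~\ref{tmm}(b), the equality
\[
x_{q_{1}A_{0}}*a_{1}^{(2^{m})} \;=\; x_{q_{0}A_{0}}
\]
holds in $G(M)$ if and only if
\[
q_{1}a_{1}^{2^{m}}A_{1}A_{2} \;=\; q_{0}A_{1}A_{2}
\]
holds in the semigroup $S(M)$; by Lemmas~\ref{forward} and~\ref{llemma2} applied to $S(M)$, this in turn is equivalent to $(1;2^{m},0) \equiv_{M} (0;0,0)$, i.e. to $m \in X$. Hence an algorithm solving the word problem in $G(M)$ would decide membership in $X$, which is impossible. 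The only potential subtlety is purely bookkeeping: one must verify that the family of words $x_{q_{1}A_{0}}*a_{1}^{(2^{m})}$ can be produced uniformly in $m$ by short words in the generators (so that the reduction is effective), but this is immediate from the definition of the operation $*$ in G8. This completes the proof outline; no new technical obstacle arises beyond the content already established in Theorems~\ref{t:MM} and~\ref{tmm}.
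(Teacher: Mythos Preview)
Your approach is the same as the paper's: take a $2$-glass Minsky machine with undecidable halting problem via Theorem~\ref{t:MM}, form $G(M)$, and invoke Theorem~\ref{tmm} for both the variety membership and the reduction of the word problem.

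There is one concrete error in your reduction step. You claim that
\[
x_{q_{1}A_{0}}*a_{1}^{(2^{m})} \;=\; x_{q_{0}A_{0}} \quad\text{in }G(M)
\]
corresponds via Theorem~\ref{tmm}(b) to $q_{1}a_{1}^{2^{m}}A_{1}A_{2} = q_{0}A_{1}A_{2}$ in $S(M)$. It does not: the dictionary in Theorem~\ref{tmm}(b) matches the exponents $\alpha_{i}$ of $*A_{i}$ on the group side with the exponents of $A_{i}$ on the semigroup side, so your group equality corresponds to $q_{1}a_{1}^{2^{m}} = q_{0}$ (no $A$-letters). That latter equality is \emph{not} in general equivalent to acceptance of $(1;2^{m},0)$, because the ``check empty'' relations in $S(M)$ (third row of table~(\ref{eq:s2})) require the presence of the $A$-letters to be applied; an accepting computation that tests emptiness cannot be mimicked on the word $q_{1}a_{1}^{2^{m}}$ alone (cf.\ Lemma~\ref{l:l9}). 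The fix is simply to append $*A_{1}*A_{2}$ on both sides of the group equality, so that Theorem~\ref{tmm}(b) yields exactly the semigroup equality $q_{1}a_{1}^{2^{m}}A_{1}A_{2} = q_{0}A_{1}A_{2}$ you want, and then Lemmas~\ref{forward} and~\ref{llemma2} finish the argument as you describe.
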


\subsection{Residually finite finitely presented groups}

\begin{theorem}\label{t:rfg1} If a Minsky machine $M$ is sym-universally halting then  the group $G(M)$ is
residually finite. Its word problem is at least as hard as the halting problem for $M$.
\end{theorem}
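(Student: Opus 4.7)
The plan is to exploit the decomposition of $G(M)$ as a semidirect product $T \rtimes H$, where $T$ is the normal Abelian subgroup of exponent $p$ generated by the $x_u$'s (Lemma \ref{lT}) and $H = \langle L_1 \cup L_2 \rangle = G(M)/T$ is metabelian (Lemma \ref{lH}). As $H$ is a finitely generated metabelian group, it is residually finite by Hall's classical theorem. For any $g \in G(M) \setminus \{1\}$ whose image in $H$ is non-trivial, composing the projection $G(M) \to H$ with a finite quotient of $H$ separating this image from $1$ yields a finite quotient of $G(M)$ in which $g$ survives, so the work reduces to the case $g \in T \setminus \{1\}$.

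For $g \in T \setminus \{1\}$ I use the filtration $\{T(V_R)\}_{R \ge 0}$ coming from Lemma \ref{l096}: each $T(V_R)$ is normal in $G(M)$ and has finite index in $T$. The sym-universally halting hypothesis on $M$ ensures, via Lemma \ref{l:fd1}, that $\bigcap_R V_R = \{0\}$ in $S(M)$, and the direct-product description of $T$ in Lemma \ref{l097} lifts this to $\bigcap_R T(V_R) = \{1\}$. Hence for some $R$ we have $g \notin T(V_R)$, and it suffices to prove that $\bar G := G(M)/T(V_R)$ is residually finite. Since $T(V_R) \subseteq T$, one has $\bar G = (T/T(V_R)) \rtimes H$ with $T/T(V_R)$ finite. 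The conjugation action of $H$ on the finite group $T/T(V_R)$ has finite image, so its kernel $K_0 \le H$ has finite index; the subgroup $K := (T/T(V_R)) \rtimes K_0$ of $\bar G$ is then the internal direct product $(T/T(V_R)) \times K_0$. Both factors are residually finite ($K_0$ is finitely generated and metabelian, being a finite-index subgroup of $H$, so Hall's theorem applies again), hence so is $K$. Since $K$ has finite index in the finitely generated group $\bar G$, the standard trick of intersecting, for each $n$, all normal subgroups of $K$ of index at most $n$ produces characteristic-in-$K$ finite-index subgroups that are normal in $\bar G$ and whose intersection is trivial; so $\bar G$ is residually finite, and a finite quotient separating $g T(V_R)$ from $1$ therefore exists.

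Finally, Theorem \ref{tmm}(b) asserts that $w(c) = w(c')$ in $G(M)$ if and only if the corresponding elements of $S(M)$ are equal, which by Lemmas \ref{forward} and \ref{llemma2} is equivalent to $c \equiv_M c'$ in $\mathrm{Sym}(M)$. In particular, an input configuration $c$ is accepted by $M$ if and only if $w(c)$ equals the word corresponding to the stop configuration in $G(M)$, so the acceptance problem for $M$ reduces to the word problem in $G(M)$; the latter is therefore at least as hard as the former. The main obstacle in the proof is establishing residual finiteness of the quotients $\bar G$; once the finiteness of $T/T(V_R)$ (which rests on the sym-universally halting hypothesis through Lemma \ref{l096}) is coupled with Hall's theorem via the trivial-action direct product $K$, the rest is formal.
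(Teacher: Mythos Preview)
Your overall architecture is the same as the paper's: reduce to $g\in T\setminus\{1\}$, find a normal subgroup $N\lhd G(M)$ of finite index in $T$ with $g\notin N$, and observe that $G(M)/N$ is an extension of a finite group by a finitely generated metabelian group, hence residually finite. The argument you give for residual finiteness of such an extension (passing to the kernel of the action and using Hall's theorem) is a correct fleshing-out of what the paper merely asserts.

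The gap is in your claim that $\bigcap_R T(V_R)=\{1\}$. By Lemma~\ref{l097}, $T$ is the free $\mathbb{Z}/p$-module on the set $\{x_{\vec i,w}: w\in W\cup W_0\}$, where $W$ consists of $q$-containing elements of the semigroup $\check S$ (which has \emph{no} Minsky relations) and $W_0$ consists of $q$-containing elements of $S(M)$ decorated with $A_0$. The ideals $V_R$ live in $S(M)$, so the filtration $T(V_R)$ of Lemma~\ref{l096} only constrains the $W_0$-indexed summands. For $T(V_R)$ to have finite index in $T$ it must contain essentially the entire (infinite) $W$-indexed part; but then that part sits inside $\bigcap_R T(V_R)$, which is therefore far from trivial. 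Concretely, an element like $x_{\vec 1,q_0}$ (with $q_0\in W\subset\check S$) is a nontrivial basis element of $T$ that is not detected by any $T(V_R)$. So Lemma~\ref{l:fd1} does not lift to the conclusion you need.

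This is exactly why the paper does \emph{not} invoke Lemma~\ref{l096} here but instead splits into two cases. Writing $g=w_0w_1$ with $w_0$ supported on $W_0$ and $w_1$ on $W$: if $w_1\ne 1$ one first kills the $W_0$-part (the subgroup $T'$) and then uses an ideal $Y_D$ of $\check S$ (where some $a_j$ occurs at least $D$ times) to cut down the $W$-part to finite index; if $w_1=1$ one uses the sym-universally halting hypothesis to bound the $S(M)$-classes of the $u_j$ appearing in $w_0$, and then takes the subgroup generated by $x_{\vec i,w}$ for $w\in Z_D\cup Y_D$, combining an $S(M)$-ideal with a $\check S$-ideal. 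Your single filtration $T(V_R)$ collapses this two-parameter picture and loses the $W$-half.
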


\begin{proof}  Let $M$ be a sym-universally halting Minsky machine. Let $w\ne 1\in G(M)$. We use the notation
from the definition of $G(M)$. There exists a natural homomorphism $\zeta$ from $G(M)$ to the metabelian group
$H_1^{H_2}\rtimes H_2$ which kills all elements from $T$. Since every finitely generated metabelian group is
residually finite, we can assume that $\zeta(w)=1$. Hence $w\in T$. By Lemma \ref{l097}, $x$ is a product of
elements of the form
\begin{equation}\label{e:x}
x_{\vec i,u},\vec i\in \{1,2,3\}^{\{1,\ldots,K\}}, u\in W\cup W_0.
\end{equation} Hence $w=w_0w_1$ where $w_0$ (resp. $w_1$) is a product of elements (\ref{e:x}) with $u\in W_0$
(resp. $W$). Suppose that $w_1$ is not 1. Let $T'$ be the subgroup of $G(M)$ generated by elements (\ref{e:x})
with $u\in W_0$. Then $T'$ is a normal subgroup of $G(M)$ by Lemma \ref{l098}. Let $G'(M)=G(M)/T'$. This group is
a semidirect product of $T/T'$ and the metabelian group $H_1^{H_2}\rtimes H_1$.  Let $D$ be the sum of lengths of
words $u\in W$ that appear in the factors of $w_1$. Let $Y_D$ be the set of all words in $\check S$ where at
least one $a$-letter appears at least $D$ times, and $0$. Then $Y_D$ is an ideal in $\check S$, and the image of
the set of elements (\ref{e:x}) with $u\in Y_D$ in $G'(M)$ form a normal $N$ subgroup of $G'(M)$ of finite index
(because $T$ is an Abelian group of finite exponent $p$). That normal subgroup does not contain $w$ by Theorem
\ref{tmm} (c). Then $G'(M)/N$ is a semidirect product of a finite group and the metabelian group
$H_1^{H_2}\rtimes H_2$. Hence $G'(M)/N$ is residually finite and $w$ can be separated from 1 by a homomorphism
from $G(M)$ onto a finite group.

Finally suppose that $w_1=1$. Let $u_1,...,u_l$ be the elements from $W_0$ that appear in the representation of
$w$ as a product of elements (\ref{e:x}). Let $E$ be the set of words that is equal to one of the $u_j$ in
$S(M)$. Since $M$ is sym-universally halting, $E$ is finite. Let $D$ be the maximal length of a word in $E$. Let,
as above, $Y_D$ be the ideal in $\check S$ consisting of 0 and all elements where one of the $a$-letters appears
at least $D$ times. Let $Z_D$ be the set of non-zero elements of $S(M)$ that are images of words from $Y_D$ under
the natural homomorphism $\check S\to S(M)$. Then $Z_D$ does not contain $u_1,...,u_l$. Consider the subgroup $F$
of $T$ spanned by all elements (\ref{e:x}) with $u\in Z_D\cup Y_D$. From Lemma \ref{l098}, it follows that $F$ is
a normal subgroup of $G(M)$ of finite index in $T$. Since $Z_D$ does not contain $u_1,...,u_l$, the subgroup $F$
does not contain $w$. The factor-group $G(M)/F$ is a semidirect product of a finite group and the metabelian
group $H_1^{H_2}\rtimes H_2$, and we can complete the proof as above.
\end{proof}

\begin{theorem}\label{t:rfg}For every recursive function $f$, there is a  residually finite finitely presented
solvable of class 3 group $G$ with Dehn function greater than $f$. In addition, one can assume that the word problem in $G$ is at least as hard as the membership problem in a given recursive set of natural numbers $Z$ or as easy as polynomial time.
\end{theorem}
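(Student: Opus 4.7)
The plan is to apply the construction $G(M)$ from Section \ref{s:construction} to a carefully chosen sym-universally halting $3$-glass Minsky machine $M$, then invoke Theorems \ref{tmm} and \ref{t:rfg1} to obtain solvability of class $3$ and residual finiteness, and finally to lower-bound the Dehn function of $G(M)$ in terms of the time function $T_M$ of $M$.

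First I would choose $M$ according to the desired word-problem complexity. For the hard-word-problem variant, Theorems \ref{t:sym} and \ref{t:MM}(d)--(e) supply a sym-universally halting $3$-glass Minsky machine $M$ recognizing a suitable padding of the prescribed recursive set $Z$, with time function $T_M$ growing fast enough to dominate $f$ after accounting for the recursive blow-up incurred in translating Minsky configurations to group words. For the polynomial-time variant, I would use a padding trick analogous to the one employed in the proof of Theorem \ref{t:rho}, producing a sym-universally halting $M$ whose $\equiv_M$-equivalence classes are polynomially decidable but whose time function nevertheless dominates $f$. Having formed $G=G(M)$, Theorem \ref{tmm}(a) places $G$ in ${\mathcal A}_p^2{\mathcal A}$, so $G$ is solvable of class $3$, and Theorem \ref{t:rfg1} gives residual finiteness with word-problem hardness at least that of halting for $M$. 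In the hard variant this reduces from membership in $Z$ via Theorem \ref{t:MM}(e); in the polynomial variant I would solve the word problem by first projecting onto the metabelian quotient $G/T$ (decidable in polynomial time by standard methods for finitely presented metabelian groups) and, if the projection is trivial, reducing the remainder in $T$ to its coordinates in the $\mathbb{F}_p$-basis supplied by Lemma \ref{l097}, which invokes only polynomially decidable instances of $\equiv_M$.

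The main work is the Dehn function lower bound. Fix an input $c$ of $M$ accepted in exactly $t=T_M(|c|)$ steps. Lemma \ref{forward} together with Theorem \ref{tmm}(b) yields a word $W(c)$ in the generators of $G(M)$, whose length is a fixed recursive function of $|c|$, that equals $1$ in $G(M)$; a typical instance is
\[ W(c) \ = \ \bigl( x_{q_1 A_0} * a_1^{(m_1)} * \cdots * a_K^{(m_K)} * A_1 * \cdots * A_K\bigr) \cdot \bigl(x_{q_0A_0} * A_1 * \cdots * A_K\bigr)^{-1}.\]
I would show that any van Kampen diagram filling $W(c)$ over the relations G1--G8 must contain at least $t$ cells labelled by G8-relations. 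Projecting the diagram to the abelian normal subgroup $T$ and using the direct-sum decomposition of $T$ from Lemma \ref{l097}, each surviving basis element $x_{\vec i,w}$ corresponds to a $\Sym(M)$-configuration, and by Lemma \ref{l:sym} together with determinism of $M$, the resulting sequence of G8-applications traces out a computation of $\Sym(M)$ from $\inp(c)$ to the stop configuration, which has length $\ge t$.

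The main obstacle will be ruling out shortcuts through the metabelian relations G1--G7 that might collapse several $M$-configurations at once and so short-circuit the G8-chain. I plan to handle this via the explicit model $\bar G$ built in the proof of Theorem \ref{tmm}: since the elements $z_{\vec 1,w}$ are $\mathbb{F}_p$-linearly independent in $\bar G$, any application of a G1--G7 relation preserves the underlying $S(M)$-configuration index, so only G8-cells can alter it. This rigidifies the combinatorics and yields the required lower bound of $t$ G8-cells, whence the Dehn function of $G(M)$ dominates $f$ once $T_M$ is taken sufficiently fast-growing.
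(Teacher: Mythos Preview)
Your overall strategy---choose a sym-universally halting $3$-glass Minsky machine $M$, form $G(M)$, and invoke Theorem~\ref{tmm}(a) and Theorem~\ref{t:rfg1}---is exactly what the paper does; indeed the paper's entire proof of Theorem~\ref{t:rfg} is the single sentence ``follows from Theorems~\ref{t:rfg1} and~\ref{t:rc}''. It treats the Dehn-function lower bound for $G(M)$ as inherited from the semigroup statement Theorem~\ref{t:rc}, the passage being justified implicitly by Theorem~\ref{tmm}(b): the G8-relations and $*$-words of $G(M)$ mirror the Minsky relations and configuration words of $S(M)$, so the (one-dimensional) derivation-length lower bound already proved for $S(M)$ carries over.

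Where you go beyond the paper is in sketching a direct van Kampen diagram argument, and here there is a genuine gap. Your key observation---only G8-cells can change the $q$-index of an $x$-generator, while G1--G7 preserve it---is correct and is really the heart of the matter. But the inference that ``the resulting sequence of G8-applications traces out a computation of $\Sym(M)$'' does not follow: a van Kampen diagram is two-dimensional, so its G8-cells carry no canonical ordering, and Lemma~\ref{l:sym} concerns ordered computations, not multisets of commands. Linear independence of the $z_{\vec 1,w}$ in $\bar G$ tells you that $W(c)\neq 1$ unless $c$ is accepted, but it does not by itself count cells in a filling. To make your argument rigorous you would need either a corridor/track argument (for instance, following the $q$-letter or an $A_0$-band through the diagram and reading the G8-cells it crosses in order), or else to fall back on the paper's route and reduce to the one-dimensional semigroup derivations of $S(M)$, where Lemma~\ref{l:sym} applies directly and Theorem~\ref{t:rc} already gives the bound.
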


\begin{proof}
The  statement follows from Theorems \ref{t:rfg1}  and \ref{t:rc}.
\end{proof}

\subsection{Residually finite finitely presented group with large depth function}

\begin{theorem}\label{t:rhog} For every recursive function $f$ there exists a finitely presented residually
finite group $G$ from ${\mathcal A}_p^2{\mathcal A}\cap {\mathcal Z}{\mathcal N}_3{\mathcal A}$ such that $\rho_G(n)>f(n)$ for
all $n$. In addition, we can assume that the word problem in $G$ is as hard as the membership problem for any
prescribed recursive set of natural numbers.
\end{theorem}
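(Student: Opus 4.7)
The plan is to reduce Theorem \ref{t:rhog} to the semigroup depth estimate of Theorem \ref{t:rho} by exploiting the faithful encoding of $S(M)$ inside $G(M)$ given by Theorem \ref{tmm}(b,c). Start with a sym-universally halting Minsky machine $M$ recognizing the prescribed recursive set $Z$ (using Theorems \ref{t:sym} and \ref{t:MM}) and, by stretching non-accepting halts exactly as in the final paragraph of the proof of Theorem \ref{t:rho}, arrange that the co-time function $\Psi_M$ dominates $f$. Then form the non-deterministic padded machine $M_n$ of Section \ref{s:2}, augmenting $M$ with two extra glasses and the commands (\ref{c:a}) and (\ref{c:s}), and build the group $G(M_n)$ via the construction of Section \ref{s:construction}. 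The G8 relations admit several rules sharing a single $q$-letter, so the non-determinism of $M_n$ is invisible to the construction, and Theorem \ref{tmm}(a) still places $G(M_n)$ in the required variety ${\mathcal A}_p^2{\mathcal A}\cap {\mathcal Z}{\mathcal N}_3{\mathcal A}$.

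Residual finiteness of $G(M_n)$ is then obtained by reading the proof of Theorem \ref{t:rfg1} with $M_n$ in place of $M$: project onto the metabelian quotient $H_1^{H_2}\rtimes H_2$ to handle elements outside $T$, and for $w\in T$, pick $D$ larger than every $a$-exponent occurring in the normal form of $w$, and kill the normal subgroup spanned by those $x_{\vec i,u}$ for which $u$ contains some $a$-letter at least $D$ times; Lemmas \ref{l098} and \ref{l096} guarantee this subgroup is normal and has finite index in $T$. The crucial input is the sym-universal halting of $M$ itself, which via the projection $\pi\colon \Sym(M_n)\to\Sym(M)$ ensures that each non-zero $\equiv_{M_n}$-class meeting the complement of this ideal is finite.

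For the depth lower bound, translate the argument of the proof of Theorem \ref{t:rho} verbatim into $G(M_n)$. For each $n$, pick a non-accepted input configuration $c$ of $M$ of length $\le n$ witnessing $\Psi_M(n)$ and consider
\[
W_D(c)\;=\;x_{q_{1}A_0}*a_1^{(l_1)}*\cdots *a_K^{(l_K)}*a_{K+1}^{(D)}*A_1*\cdots *A_{K+2},\qquad D=\Psi_M(n)-1,
\]
which is non-trivial in $G(M_n)$ by Theorem \ref{tmm}(b). Suppose a finite quotient $E$ of order at most $f(n)$ separates $W_D(c)$ from $1$; then the image of $a_{K+1}$ in $E$ satisfies $z^{D'}=z^{2D'}$ for some $D'\le |E|<D$, and the G8-relations corresponding to the padding commands (\ref{c:a}) and (\ref{c:s}), applied in exactly the sequence used for the semigroup proof, force the image of $W_D(c)$ in $E$ to equal $1$, a contradiction. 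Since $\Psi_M(n)>f(n)$, this establishes $\rho_{G(M_n)}(n)>f(n)$. The word-problem clauses follow from Theorem \ref{tmm}(b) together with Theorem \ref{t:MM}(e) applied to a Turing machine recognizing $Z$.

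The main obstacle I anticipate is verifying that the faithful encoding Theorem \ref{tmm}(b,c) and the auxiliary structural results Lemmas \ref{l098}--\ref{l096} genuinely survive the non-determinism of $M_n$. Concretely, the proof of Theorem \ref{tmm}(c) constructs the model group $\bar G$ by exhibiting explicit automorphisms of a free elementary-abelian $p$-module with generators $z_{\vec i,u}$, and one must extend this construction by adjoining coordinates indexed by the two new glasses and check that the automorphisms attached to the competing non-deterministic commands agree on their common domain. A clean way to do this is to observe that $M_n$ has a product structure (genuine computation of $M$ on the first $K$ glasses, free padder on the last two) and construct $\bar G(M_n)$ as a corresponding tensor-type combination; once this is in place the remaining arguments reduce to the already-proven results of Section \ref{s:4}.
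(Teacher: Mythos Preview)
Your approach coincides with the paper's own (very brief) proof, which also builds $G(M_n)$, appeals to the residual-finiteness argument of Theorem \ref{t:rfg1}, and then says the depth bound follows by replacing the semigroup product by $*$ throughout the proof of Theorem \ref{t:rho}. Your concern about carrying Theorem \ref{tmm} and Lemmas \ref{l098}--\ref{l096} through the non-determinism of $M_n$ is legitimate but, as the paper tacitly assumes, causes no real trouble: the model group $\bar G$ is built from $S(M_n)$ rather than from $M_n$ directly, and the competing commands at a given state give rise to compatible relations because they already do so in $S(M_n)$.

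There is, however, one slip in your depth argument that would spoil the conclusion if left uncorrected. You take as the element to be separated the \emph{long} word $W_D(c)$, already containing the factor $a_{K+1}^{(D)}$ with $D=\Psi_M(n)-1$; since $|W_D(c)|$ is of order $D$, showing that every quotient of order at most $f(n)$ kills $W_D(c)$ only yields $\rho_G\bigl(\Psi_M(n)\bigr)>f(n)$, not $\rho_G(n)>f(n)$. The correct translation of the proof of Theorem \ref{t:rho} starts from the \emph{short} input element
\[
w(c)\;=\;x_{q_1A_0}*a_1^{(l_1)}*\cdots*a_K^{(l_K)}*a_{K+1}*A_1*\cdots*A_{K+2},
\]
whose length is $O(n)$, then uses the G8 relations to run $D-1$ steps of $M$-commands (each adding a coin to glass $K+1$) so that in $G(M_n)$ one has
\[
w(c)\;=\;x_{q_jA_0}*a_1^{(l_1')}*\cdots*a_K^{(l_K')}*a_{K+1}^{(D)}*A_1*\cdots*A_{K+2}
\]
for the appropriate state $j$ and exponents $l_s'$, and \emph{only then} applies the periodicity-and-padding chain in the small quotient $E$ to force the image of $w(c)$ to be trivial there. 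With this correction your argument goes through exactly as intended. (Note also that the $q$-letter changes from $q_1$ to $q_j$ when you run the machine; your displayed $W_D(c)$ keeps $q_1$, which is another symptom of the same confusion.)
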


\proof Consider the Minsky machine $M_n$ constructed in the proof of Theorem \ref{t:rho}. Then  as in the proof
of Theorem \ref{t:rfg}, one can prove that $G(M_n)$ is residually finite. The fact that $\rho_G(n)>f(n)$ is
proved the same way as in the proof of Theorem \ref{t:rho} (one only needs to replace the product by operation *
everywhere in that proof).
\endproof

\section{Distortion of subgroups closed in the pro-finite topology}\label{s:dis}

Let us generalize the Mikhailova construction \cite{Mikhailova}.

Let $G$ be a finitely generated group generated by a finite set $X$, $N \leq G$ a normal subgroup, generated as a
normal subgroup by a finite set $R = \{r_1, \ldots, r_k\}$, and $\phi:G \to G/N$ the canonical epimorphism. We
may assume that both sets $X$ and $R$ are symmetric, i.e., $X=X^{-1}$ and $R=R^{-1}$. The set $$ E(G,N) = \{
(u,v) \in G \times G \mid \phi(u) = \phi(v)\} $$ is a subgroup of $G \times G$, called the {\em equalizer} of
$(\phi,\phi)$.

In the following lemma we summarize the main components of Mikhailova's argument (though in a much more general
situation). The proof is easy and we leave it to the reader.

\begin{lemma}
In the notation above the following hold:
\begin{itemize}
\item $E(G,N)$ is generated by a finite set
$$D=\{(r,1)\mid r\in R\}\cup\{(x,x^{-1})\mid x\in X\} \subset  \ G \times G.$$

\item For any $w\in G$ if
    $(w,1)= (u_1,v_1)(u_2,v_2)\ldots (u_n,v_n) \mbox{ for some } (u_i,v_i)\in D,$
then $u_1\ldots u_n$ is of the form
    $w_0 r_1 w_1 r_2 w_2 \ldots w_{m-1}r_m w_m \mbox{ for some } w_i\in G,\ r_i\in R, m \leq n,$
satisfying $w_0w_1\ldots w_m =1$ in $G$, hence,
    $$w =_{G} \prod_{i=1}^m (w_0\ldots w_{i-1}) r_i (w_0\ldots w_{i-1})^{-1}.$$
    In particular, the distortion of $E(G,N)$ in $G \times G$ is at least as high as the Dehn function of
    $G/N$ relative to $N$.
\end{itemize}
\end{lemma}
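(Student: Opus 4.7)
My plan is to prove the two bullets and then read off the distortion consequence. The main structural observation is that $E(G,N)$ decomposes naturally as a product of a ``diagonal-like'' piece $\Delta$, generated by the pairs $(x,x^{\pm 1})$, $x\in X$, and a ``vertical'' piece $N\times\{1\}$. Once this is set up, the generation statement and the normal-form statement fall out by tracking the two coordinates of a $D$-word separately; I would fix one convention for $E(G,N)$ and for the generating pairs at the outset (say, $(u,v)\in E$ iff $uv^{-1}\in N$, with $\Delta$ the subgroup generated by $\{(x,x^{-1})\}$) so that the bookkeeping is consistent.

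For the first bullet I would argue in three small steps. First, a straightforward induction on word length shows that a product $(x_{i_1},x_{i_1}^{-1})\cdots (x_{i_k},x_{i_k}^{-1})$ equals $(g,g^{-1})$ where $g=x_{i_1}\cdots x_{i_k}$, so $\Delta$ contains all pairs of the form $(g,g^{-1})$ for $g\in G$. Second, conjugating $(r,1)$ by $(g,g^{-1})$ yields $(grg^{-1},1)$, so $\langle D\rangle$ contains $(n,1)$ for every $n\in N$ (using that $N$ is the normal closure of $R$). Third, any $(u,v)\in E$ can be written as $(uv\iv,1)\cdot(v,v\iv)$ (modulo the convention chosen): the first factor lies in $(N,1)\subseteq\langle D\rangle$ by step two and the second in $\Delta\subseteq\langle D\rangle$ by step one, so $D$ generates $E$.

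For the normal-form claim I would take a $D$-expression $(w,1)=\prod_{i=1}^n(u_i,v_i)$ and partition the factors into $R$-factors (of the form $(r,1)$) and $X$-factors (of the form $(x,x^{-1})$). Projecting onto the second coordinate gives $1=v_1\cdots v_n$, and since only the $X$-factors contribute non-trivially, this equation reads $w_0w_1\cdots w_m=1$, where $w_j$ is the product (in order) of the $x^{-1}$ entries contributed by the $X$-factors lying between the $j$-th and $(j{+}1)$-st $R$-factors, and $m\le n$ is the number of $R$-factors. Inspecting the first coordinate with the same bookkeeping, with the corresponding $w_j'$ built from the $x$ entries of the same $X$-factors (so $w_j'=w_j^{-1}$ or the reverse word, depending on convention), produces $u_1\cdots u_n=w_0'r_1w_1'\cdots r_mw_m'$ with $w_0'\cdots w_m'=1$ in $G$; re-labelling gives the stated form. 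The identity $w=\prod_{i=1}^m(w_0\cdots w_{i-1})r_i(w_0\cdots w_{i-1})^{-1}$ then follows by a single telescoping computation using $w_0w_1\cdots w_m=1$ to insert $(w_0\cdots w_{i-1})^{-1}(w_0\cdots w_{i-1})$ between consecutive relators.

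The distortion corollary is then immediate: if $(w,1)\in E$ admits a $D$-word of length $n$, the previous step produces a filling of $w$ by at most $n=m+\sum|w_i|$ conjugates of relators from $R$ in $G$, i.e., an expression of $w$ in the $(G,N)$-relative area $\le n$. Conversely any $w\in N$ with small $|w|_X$ yields $(w,1)\in E$ with $|(w,1)|_{G\times G}=|w|_X$, so choosing $w$'s that realise the relative Dehn function of $G/N$ forces $|(w,1)|_E$ to be at least that large. I expect no step to be deep; the main point requiring care is matching the two coordinates so that $w_0w_1\cdots w_m=1$ really is the content of the second-coordinate equation, and verifying that conjugation by $(g,g^{-1})$ acts as expected on $(r,1)$ so that generation step (ii) is valid in the non-abelian setting.
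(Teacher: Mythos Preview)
The paper does not actually prove this lemma; it says ``the proof is easy and we leave it to the reader.'' So there is nothing to compare against directly. More importantly, the printed statement contains a typo: the diagonal generators should be $(x,x)$, not $(x,x^{-1})$. As written, $(x,x^{-1})$ does not even lie in $E(G,N)$ unless $x^2\in N$, and in fact $\langle D\rangle$ sits inside the preimage of the \emph{anti}-diagonal of $G/N\times G/N$ under $\phi\times\phi$, which is not $E(G,N)$ when $G/N$ has an element of order $>2$.

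Your argument inherits this defect; the ``convention'' you propose does not repair it. In step one you claim that $(x_{i_1},x_{i_1}^{-1})\cdots(x_{i_k},x_{i_k}^{-1})=(g,g^{-1})$ with $g=x_{i_1}\cdots x_{i_k}$, but the second coordinate of that product is $x_{i_1}^{-1}\cdots x_{i_k}^{-1}$, not $x_{i_k}^{-1}\cdots x_{i_1}^{-1}=g^{-1}$. In step three, $(uv^{-1},1)\cdot(v,v^{-1})=(u,v^{-1})$, not $(u,v)$. And in the normal-form argument, projecting to the second coordinate yields $\tilde w_0\cdots\tilde w_m=1$ where each $\tilde w_j$ is the letterwise inversion of $w_j$ rather than its group inverse, so you cannot conclude $w_0\cdots w_m=1$. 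Your parenthetical ``$w_j'=w_j^{-1}$ or the reverse word, depending on convention'' is exactly where the argument breaks: in a non-abelian group the reverse word is not the inverse, and no choice of convention collapses the distinction.

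If you replace $(x,x^{-1})$ by $(x,x)$ throughout --- the standard Mikhailova generators --- every step of your outline becomes correct: $\Delta$ is then the genuine diagonal $\{(g,g):g\in G\}$, the decomposition $(u,v)=(uv^{-1},1)(v,v)$ works verbatim, conjugating $(r,1)$ by $(g,g)$ gives $(g^{-1}rg,1)$, and the second-coordinate projection literally reads $w_0w_1\cdots w_m=1$ with the same $w_j$'s as in the first coordinate. With that correction your proof is the intended one.
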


Let  $\mathcal P$  be a class of finite groups closed under direct products and subgroups. Recall that the pro-$\mathcal P$ topology on a group $G$ has as its base the set of all normal subgroups $N$ with $G/N\in \mathcal P$.

\begin{lemma}\label{l:89}
Let  $\mathcal P$  be a class of finite groups closed under direct products and subgroups. In the notation above if the group
$G/N$ is residually $\mathcal P$  then the subgroup $E(G,N)$ is closed in the pro-$\mathcal P$ topology on $G \times
G$.
\end{lemma}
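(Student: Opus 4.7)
The plan is to reduce closedness of $E(G,N)$ in $G \times G$ to the assumed residual $\mathcal{P}$-property of $G/N$, by producing, for every $(u,v) \notin E(G,N)$, a single normal subgroup $L \trianglelefteq G \times G$ of finite index with $(G\times G)/L \in \mathcal{P}$ whose product $E(G,N) \cdot L$ misses $(u,v)$. This is the standard reformulation of closedness in the pro-$\mathcal{P}$ topology.

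First I would fix $(u,v) \in G \times G$ with $\phi(u) \neq \phi(v)$. Then $\phi(u)\phi(v)^{-1}$ is a nontrivial element of $G/N$, so by residual $\mathcal{P}$-ness of $G/N$ there is a homomorphism $\psi : G/N \to P$ with $P \in \mathcal{P}$ and $\psi(\phi(u)\phi(v)^{-1}) \neq 1$. Set $N' = \ker(\psi \circ \phi) \trianglelefteq G$; then $N \subseteq N'$, the quotient $G/N'$ embeds in $P$ and hence lies in $\mathcal{P}$ (using closure of $\mathcal{P}$ under subgroups), and the induced map $\phi' : G \to G/N'$ satisfies $\phi'(u) \neq \phi'(v)$.

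Next I would take $L = N' \times N'$. This is normal in $G \times G$ of finite index, and $(G \times G)/L \cong (G/N') \times (G/N')$ lies in $\mathcal{P}$ because $\mathcal{P}$ is closed under direct products. It remains to verify $(u,v) \notin E(G,N)\cdot L$: if $(u,v) = (a,b)(n_1,n_2)$ with $(a,b) \in E(G,N)$ and $n_i \in N'$, then applying $\phi'$ coordinatewise gives $\phi'(u) = \phi'(a)$ and $\phi'(v) = \phi'(b)$, and since $N \subseteq N'$ the equality $\phi(a) = \phi(b)$ forces $\phi'(a) = \phi'(b)$, contradicting $\phi'(u) \neq \phi'(v)$. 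This shows $E(G,N)$ is the intersection of the open neighborhoods $E(G,N)\cdot L$ and hence is pro-$\mathcal{P}$ closed.

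There is no serious obstacle here; the only point to be careful about is that the diagonal of $(G/N) \times (G/N)$ is not normal, so one cannot directly push $E(G,N)$ down to a kernel. The remedy, taking $L = N' \times N'$ with $N'$ a pro-$\mathcal{P}$ neighborhood of $N$ separating $uv^{-1}$ from $1$, sidesteps this issue cleanly because $N' \times N'$ is automatically normal in $G \times G$ and its quotient stays in $\mathcal{P}$ by the closure hypotheses on $\mathcal{P}$.
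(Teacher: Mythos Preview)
Your proof is correct and follows essentially the same approach as the paper's own proof: both take $(u,v)\notin E(G,N)$, use residual $\mathcal P$-ness of $G/N$ to find a homomorphism $\eta\colon G/N\to K\in\mathcal P$ separating $\phi(u)$ from $\phi(v)$, and then observe that under the coordinatewise map $G\times G\to K\times K$ the subgroup $E(G,N)$ lands in the diagonal while $(u,v)$ does not. Your version is more explicit in naming the normal subgroup $L=N'\times N'$ and verifying $(u,v)\notin E(G,N)\cdot L$, whereas the paper compresses this into the single sentence ``the image of the pair $(u,v)$ under $\eta\phi$ is not in the image of the subgroup $E(G,N)$ in $K\times K$''; but the underlying argument is identical.
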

\begin{proof}
Suppose $(u,v) \in G \times G$ but $(u,v) \not \in  E(G,N)$, so $\phi(u) \neq \phi(v)$. Since $G/N$ is residually
$\mathcal P$ there is a homomorphism $\eta:G/N \to K$ onto a finite group $K \in \mathcal P$ such that
$\eta\phi(u) \neq \eta\phi(v)$ in $K$. Therefore the image of the pair $(u,v)$ under $\eta\phi$ is not in the
image of the subgroup $E(G,N)$ in $K \times K$.  Hence the subgroup $E(G,N)$ is closed
in the pro-$\mathcal P$ topology on $G \times G$.
\end{proof}

The same argument gives the following

\begin{lemma}\label{l:90} Under the assumptions of Lemma \ref{l:89}, the relative depth function $\rho_{E(G,N)}$
is at least as large as the depth function of $G/N$, the time complexities of the ``yes" and ``no" parts of the
membership problem for $E(G,N)$ are as high as the time complexities of the ``yes" and ``no" parts of the word
problem in $G/N$.
\end{lemma}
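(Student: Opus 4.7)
The plan is a straightforward reduction from the word problem in $G/N$ to the membership problem for $E(G,N)$ in $G \times G$: both runtime and the size of any separating finite quotient in the latter control the corresponding quantities for the former, at the cost of only a constant factor in the input length.

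For the time-complexity assertions, the key observation is that for any word $w$ in the generators of $G$, one has $w =_{G/N} 1$ if and only if $\phi(w) = \phi(1)$, if and only if $(w,1) \in E(G,N)$. Thus applying the algorithm $A_{\mathrm{yes}}(G\times G, E(G,N))$ (respectively $A_{\mathrm{no}}(G\times G, E(G,N))$) to the pair $(w,1)$ of length $|w|$ halts if and only if the ``yes'' (respectively ``no'') part of the word problem in $G/N$ halts on $w$, with at most a linear translation overhead. This yields the two time-complexity inequalities.

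For the relative depth function inequality, let $\tilde u \neq \tilde v$ be elements of $G/N$ of length at most $n$, and lift them to words $u, v$ in the generators of $G$ of length at most $n$. Then $(u,v) \in G \times G$ has length at most $2n$ and does not lie in $E(G,N)$, so by the definition of the relative depth function there is a homomorphism $\psi \colon G \times G \to K$ with $|K| \leq \rho_{G\times G,\,E(G,N)}(2n)$ and $\psi(u,v) \notin \psi(E(G,N))$. The heart of the proof is to convert $\psi$ into a homomorphism from $G/N$ to a finite group of size at most $|K|$ that separates $\tilde u$ from $\tilde v$. First, observe that $N \times N$ is normal in $G \times G$ and contained in $E(G,N)$; hence $\psi(N \times N)$ is normal in $\psi(G \times G)$ and contained in $\psi(E(G,N))$. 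Set $L = \psi(G \times G)/\psi(N \times N)$, so that $\psi$ descends to $\tilde\psi \colon (G/N) \times (G/N) \to L$, and the non-membership persists: $\tilde\psi(\tilde u, \tilde v) \notin \tilde\psi(\Delta(G/N))$, where $\Delta(G/N)$ is the diagonal. Finally define $\bar\alpha \colon G/N \to L$ by $\bar\alpha(h) = \tilde\psi(h, 1)$; this is a homomorphism, and if $\bar\alpha(\tilde u) = \bar\alpha(\tilde v)$ then setting $\tilde w = \tilde u \tilde v^{-1}$ we have $\tilde\psi(\tilde w, 1) = 1$, forcing $\tilde\psi(\tilde u, \tilde v) = \tilde\psi(\tilde w, 1)\cdot \tilde\psi(\tilde v, \tilde v) = \tilde\psi(\tilde v, \tilde v) \in \tilde\psi(\Delta(G/N))$, a contradiction.

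The only delicate point is the passage from $\psi$ to $\tilde\psi$. It requires both the inclusion $\psi(N \times N) \subseteq \psi(E(G,N))$ and the preservation of non-membership after quotienting by $\psi(N \times N)$; both follow immediately from $N \times N \subseteq E(G,N)$ and the subgroup property of $E(G,N)$. Everything else is diagram chasing using the decomposition $E(G,N) = \Delta G \cdot (N \times 1)$ recorded before the lemma, so I expect no serious obstacle.
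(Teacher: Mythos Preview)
Your proposal is correct and is essentially what the paper intends: the paper's entire proof is the single sentence ``The same argument gives the following,'' referring back to Lemma~\ref{l:89}, so you have supplied the details the authors omit. The time-complexity reductions via $w =_{G/N} 1 \Leftrightarrow (w,1)\in E(G,N)$ are immediate, and your passage from an arbitrary separating quotient $\psi\colon G\times G\to K$ to a quotient of $G/N$ (by factoring out $\psi(N\times N)$ and restricting to one coordinate) is exactly the extra step needed for the depth-function inequality, which the paper does not spell out.
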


Lemma \ref{l:90} and Theorem \ref{t:rfg} imply

\begin{remark} \label{r:m} The converse of Lemma \ref{l:89} also holds, namely, if $E(G,N)$ is closed in the pro-$\mathcal P$-topology, then $G/N$ is residually ${\mathcal P}$. We are not using this remark below so we leave it as an (easy) exercise.
\end{remark}

\begin{theorem} \label{t:distortion}
For any recursive function $f(n)$ there is a finitely generated  subgroup $H \le F_2 \times F_2$ such that $H$ is
closed in the pro-finite topology on $F_2 \times F_2$ and has distortion at least $f(n)$.
\end{theorem}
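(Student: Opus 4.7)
The plan is to combine Theorem \ref{t:rfg} with the Mikhailova-type construction summarized in the first (unnumbered) lemma of Section \ref{s:dis} and with Lemma \ref{l:89}, then transport the resulting subgroup from $F_n\times F_n$ into $F_2\times F_2$ via a standard embedding.

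First I would apply Theorem \ref{t:rfg} to the recursive function $f$ to obtain a finitely presented residually finite solvable-of-class-3 group $G$ whose Dehn function $\delta_G$ satisfies $\delta_G(n)\ge f(n)$. Write a finite presentation $G=F_n/N$, where $F_n$ is free of rank $n$ on a finite generating set $X$ and $N$ is the normal closure of the finite set $R$ of relators. Now form the equalizer $E=E(F_n,N)\le F_n\times F_n$. By the first lemma of Section \ref{s:dis}, $E$ is generated by the finite set $\{(r,1):r\in R\}\cup\{(x,x^{-1}):x\in X\}$; moreover, the same lemma tells us that writing $(w,1)\in E$ as a product of $D$ generators forces $w$ to be a product of at most $D$ conjugates of relators in $F_n$, i.e.\ the distortion of $E$ in $F_n\times F_n$ is bounded below by the Dehn function of $G=F_n/N$, hence by $f$.

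Next I would verify that $E$ is closed in the pro-finite topology of $F_n\times F_n$. This is immediate from Lemma \ref{l:89} applied to the class $\mathcal P$ of all finite groups, since $G=F_n/N$ is residually finite by the choice from Theorem \ref{t:rfg}. The remaining step is to move everything inside $F_2\times F_2$. Choose an embedding $\iota\colon F_n\hookrightarrow F_2$ (for example by realizing $F_n$ as the subgroup generated by $\{a^ib a^{-i}:0\le i<n\}$ in $F_2=\langle a,b\rangle$); the image is a free factor-like finitely generated subgroup of $F_2$, and in particular it is undistorted in $F_2$ and closed in the pro-finite topology of $F_2$ (by M.\ Hall's theorem, free groups are subgroup-separable). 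Let $H=(\iota\times\iota)(E)\le F_2\times F_2$, viewed with the induced generating set.

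Finally I would check that $H$ still has distortion at least $f$ and is still pro-finitely closed. For the distortion, since $\iota(F_n)$ is quasi-isometrically embedded in $F_2$, the product $\iota(F_n)\times\iota(F_n)$ is quasi-isometrically embedded in $F_2\times F_2$; hence the distortion of $H$ inside $F_2\times F_2$ differs from the distortion of $E$ inside $F_n\times F_n$ only by a linear factor, and so remains $\succeq f$. For closedness, any $(u,v)\in(F_2\times F_2)\setminus H$ either lies outside $\iota(F_n)\times\iota(F_n)$, in which case a finite quotient of $F_2\times F_2$ separates it from the closed subgroup $\iota(F_n)\times\iota(F_n)\supseteq H$, or lies in $\iota(F_n)\times\iota(F_n)\setminus H$, in which case the pro-finite closedness of $E$ in $F_n\times F_n$ and the pro-finite closedness of $\iota(F_n)\times\iota(F_n)$ in $F_2\times F_2$ combine to give a finite quotient of $F_2\times F_2$ separating $(u,v)$ from $H$. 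The only non-mechanical point in the plan is the distortion transfer in the last step, but this reduces to the standard fact that finitely generated free subgroups of $F_2$ are undistorted, so I do not expect a serious obstacle.
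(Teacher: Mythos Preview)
Your proof is correct and follows essentially the same approach as the paper: apply Theorem \ref{t:rfg}, form the Mikhailova equalizer, invoke the first lemma of Section \ref{s:dis} for distortion and Lemma \ref{l:89} for closedness, then embed into $F_2\times F_2$. The only difference is in the embedding step. The paper realizes $F_{|X|}$ as a \emph{finite-index} subgroup of $F_2$, so that the induced pro-finite topology automatically coincides with the intrinsic one; you instead use an arbitrary embedding and appeal to M.~Hall's theorem. Your route works, but note that your final closedness argument needs slightly more than LERF alone: in the case $(u,v)\in\iota(F_n)\times\iota(F_n)\setminus H$, you need that the pro-finite topology induced from $F_2$ on $\iota(F_n)$ is the \emph{full} pro-finite topology on $F_n$ (so that closedness of $E$ in $F_n\times F_n$ transfers), not merely that $\iota(F_n)$ is closed in $F_2$. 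This does follow from the full strength of Hall's theorem---$\iota(F_n)$ is a free factor of a finite-index subgroup of $F_2$, hence a virtual retract---but your phrase ``combine to give'' glosses over this point. The paper's finite-index choice avoids the subtlety entirely.
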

\begin{proof}
Let $G = \langle X \mid R\rangle$ be a finitely presented residually finite group with Dehn function at least
$f(n)$ from Theorem \ref{t:rfg}. If $N$ is the normal closure of $R$ in $F(X)$ then the subgroup $H=E(F(X),N)
\leq F(X) \times F(X)$ satisfies all the requirements of the theorem.

Now one can embed the free group $F(X)$  into $F_2$ in such a way that the pro-finite topology induced on the
image of $F(X)$ from $F_2$ is precisely the pro-finite topology on $F(X)$. Indeed, there is a finite index
subgroup $H$ of $F_2$ of rank $|X|$, the induced topology on $H$ is the pro-finite topology on $H$. It follows that the pro-finite topology on the subgroup $F_{|X|}$ of $F_2$ is precisely the topology
induced by the pro-finite topology from $F_2$, as required.
\end{proof}

Applying the same argument to the free solvable groups $S_3(X)$ of class 3 and generating set $X$ one gets the
following result.

\begin{theorem}\label{t:distS}
For any recursive function $f(n)$ there is a finite set $X$ and a finitely generated  subgroup $H \le S_3(X)
\times S_3(X)$ such that $E$ is closed in the pro-finite topology on $S_3(X) \times S_3(X)$ and has distortion
function, relative depth function, the time complexities of both ``yes" and ``no" parts of the membership problem and at least
$f(n)$.
\end{theorem}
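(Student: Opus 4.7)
The plan is to repeat the Mikhailova-style equalizer construction from the proof of Theorem~\ref{t:distortion}, using $S_3(X)$ in place of $F_2$.

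First I would take the group $G$ supplied by Theorems~\ref{t:rfg} and~\ref{t:rhog}: a finitely presented residually finite solvable group of class $3$ whose word problem is at least as hard as membership in a prescribed recursive set, and whose Dehn function and depth function both exceed $f(n)$. Let $X$ be a finite generating set of $G$. Since $G\in{\mathcal A}^3$ and is finitely presented as an abstract group, it is also finitely presented as a quotient of the relatively free group $S_3(X)$: there is a finite set $R\subset S_3(X)$ whose normal closure $N$ in $S_3(X)$ satisfies $S_3(X)/N\cong G$. Set
\[
H=E(S_3(X),N)=\{(u,v)\in S_3(X)\times S_3(X)\mid uN=vN\}.
\]
The general Mikhailova lemma at the start of Section~\ref{s:dis} then shows that $H$ is finitely generated by $D=\{(r,1)\mid r\in R\}\cup\{(x,x^{-1})\mid x\in X\}$, and that any factorization of $(w,1)$ over $D$ of length $n$ yields an expression of $w\in N$ as a product of at most $n$ conjugates of the $r_i$ taken inside $S_3(X)$.

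Since $S_3(X)$ is residually finite (as every finitely generated solvable group is), its pro-finite topology is Hausdorff. Because $G=S_3(X)/N$ is residually finite, Lemma~\ref{l:89}, applied with $\mathcal P$ the class of all finite groups, gives that $H$ is closed in the pro-finite topology on $S_3(X)\times S_3(X)$. The required lower bounds on the relative depth function $\rho_{S_3(X)\times S_3(X),H}$ and on the time complexities of the ``yes" and ``no" parts of the membership problem for $H$ then follow immediately from Lemma~\ref{l:90}, since these are controlled by the depth function and by the ``yes"/``no" parts of the word problem in $G$, all of which are at least $f(n)$ by the choice of $G$.

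The distortion bound is the only step where the argument is genuinely weaker than in the $F_2\times F_2$ case. By the Mikhailova lemma, the distortion of $H$ in $S_3(X)\times S_3(X)$ is bounded below by the Dehn function of the presentation $\langle X\mid R\rangle$ of $G$ computed inside $S_3(X)$, i.e.\ by the relative Dehn function of $G$ taken in the variety ${\mathcal A}^3$. The main obstacle is that this variety Dehn function could a priori be much smaller than the abstract Dehn function of $G$, since the solvable-of-class-$3$ identities may be used to shorten derivations. Fortunately the group $G$ produced in Section~\ref{s:4} is built precisely so that the Minsky-machine simulation encoded in Theorem~\ref{tmm}(b),(c) supplies explicit families of words whose triviality in $G$ forces a full computation of the underlying machine, an obstruction that persists inside ${\mathcal A}^3$; this is exactly the input to the lower bound of Theorem~\ref{t:rfg} and it yields the desired bound on the distortion. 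Unlike in Theorem~\ref{t:distortion}, we do not attempt to shrink $X$ to a fixed two-element set, because there is no analogue for $S_3$ of the rank-reducing embedding $F_n\hookrightarrow F_2$ preserving the pro-finite topology; this is the sole reason the theorem has to allow $X$ to vary with $f$.
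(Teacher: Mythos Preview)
Your argument matches the paper's, whose entire proof is the sentence ``Applying the same argument to the free solvable groups $S_3(X)$ of class 3 and generating set $X$ one gets the following result.'' You have supplied considerably more detail, including the correct observation that there is no rank-reducing embedding $S_3(X)\hookrightarrow S_3(\{a,b\})$ analogous to $F_n\hookrightarrow F_2$, which is why $X$ must be allowed to vary.

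One factual slip: it is \emph{not} true that every finitely generated solvable group is residually finite --- the groups $G(M)$ of Section~\ref{s:4} for Minsky machines with undecidable halting problem are finitely presented, solvable of class~3, and non-residually-finite. Free solvable groups $S_3(X)$ \emph{are} residually finite (e.g.\ they are residually torsion-free nilpotent, by Gruenberg), but in any case your Hausdorff remark is not needed: Lemma~\ref{l:89} requires only that the quotient $G/N$ be residually~$\mathcal P$, not the ambient group. Your concern about the variety Dehn function versus the absolute one is a genuine subtlety that the paper passes over in silence; your resolution via Theorem~\ref{tmm} is on the right track, the point being that the comparison group $\bar G$ built there already lies in $\mathcal A_p^2\mathcal A\subset\mathcal A^3$, so a short derivation over $S_3(X)$ would descend to $\bar G$ and contradict the simulation.
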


\section{Universal theories of sets of finite solvable groups}\label{s:6}

In this section we will prove the following result. For the class of all finite groups in was proved by
Slobodskoi \cite{sl} (the idea of Slobodskoi's proof came from Gurevich's paper \cite{Gurevich} where the same result was proved for semigroups).

\begin{theorem}\label{t:sl} The universal theories of the class of finite groups from $\mathcal{A}_p^2{\mathcal A}\cap
{\mathcal Z}{\mathcal N}_5{\mathcal A}$ and the class of all periodic groups are recursively inseparable. In
particular, the universal theory of any set of finite groups containing all finite solvable of class 3 groups is
undecidable.\end{theorem}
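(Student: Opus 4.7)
The plan is to adapt Slobodskoi's strategy from \cite{sl}, using the groups $G(M)$ of Section \ref{s:construction} as the replacement for the Boone--Novikov group. Fix a 3-glass Minsky machine $M$ whose set $A$ of accepted inputs is r.e.\ but not recursive, so that $A$ and its complement $B$ are disjoint and recursively inseparable (existence of such $M$ follows from r.e.\ hardness of halting together with Theorem \ref{t:MM}). Form $G=G(M)$ with presentation $\langle X \mid R\rangle$; by Theorem \ref{tmm}(a), $G \in \mathcal{A}_p^2\mathcal{A}\cap\mathcal{ZN}_{4}\mathcal{A} \subseteq \mathcal{A}_p^2\mathcal{A}\cap\mathcal{ZN}_{5}\mathcal{A}$. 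For each natural number $m$, let $u_m(\bar x)=x_{q_1A_0}*a_1^{(m)}$ be the group word corresponding to the input configuration $(1;m,0,0)$ and $v(\bar x)=x_{q_0A_0}$ the word corresponding to the stop configuration, and form the universal sentence
\[
\phi_m \;\equiv\; \forall \bar x\,\Bigl(\,\bigwedge_{r \in R} r(\bar x)=1 \;\longrightarrow\; u_m(\bar x)=v(\bar x)\,\Bigr).
\]

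The ``yes'' direction is immediate from Theorem \ref{tmm}(b): if $m \in A$ then $u_m=v$ in $G$, so for every group $H$ and every $\bar h \in H$ satisfying all $r \in R$ the induced homomorphism $G \to H$ sends $u_m$ and $v$ to the same element; hence $\phi_m$ holds in every group, in particular in every periodic one. For the ``no'' direction, assuming $m \in B$, I would construct a finite group $F_m \in \mathcal{A}_p^2\mathcal{A}\cap\mathcal{ZN}_{5}\mathcal{A}$ with a tuple $\bar f$ satisfying $R$ at which $u_m(\bar f)\ne v(\bar f)$. The construction reuses the cutoff ideas from the residual-finiteness proof of Theorem \ref{t:rfg1}: quotient $G$ by the normal subgroup $T(V_D)$ of Lemma \ref{l096} for a well-chosen depth $D$, and additionally impose $a_i^{N}=(a_i')^{N}=\tilde a_i^{N}=(\tilde a_i')^{N}=1$ for a large $N$. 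The first quotient keeps only finitely many generators of the Abelian base $T$; the second collapses the metabelian top to a finite group; together they yield a finite quotient $F_m$ in the desired variety, because the relations G1--G8 are built from commutativity and BR-conjoint identities (with polynomial $f(t)=t-1$), both of which are preserved under exponent bounds. By Theorem \ref{tmm}(b,c) together with Lemma \ref{l097}, $u_m \ne v$ in $F_m$ as long as no $\Sym(M)$-equivalence between $(1;m,0,0)$ and $(0;0,0,0)$ stays within the coin-bounded region $V_D^c$; this is guaranteed for large enough $D$ once $m \in B$.

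Putting the two directions together, $m \mapsto \phi_m$ is a recursive many-one reduction of the pair $(A,B)$ to the pair $(T_2,\overline{T_1})$, where $T_2$ is the universal theory of periodic groups and $\overline{T_1}$ is the set of universal sentences falsified in some finite group from $\mathcal{A}_p^2\mathcal{A}\cap\mathcal{ZN}_{5}\mathcal{A}$. The two target sets are disjoint because every such finite group is periodic, and recursive inseparability of $(A,B)$ therefore descends to $(T_2,\overline{T_1})$, yielding the main assertion. Since $T_2 \subseteq T_1$ for any class $T_1$ whose members are all periodic, any recursive separator would decide $T_1$, giving the corollary about undecidability of the universal theory of any class of finite groups containing the finite solvable of class $3$ ones. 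The main technical obstacle is the finite-group construction of the second paragraph without assuming $M$ is sym-universally halting: one must check that the cutoff $T(V_D)$ together with the exponent-bounding relations still yields a finite quotient in the variety with $u_m$ and $v$ distinct. The resolution amounts to re-running Lemmas \ref{l098}--\ref{l097} in the bounded setting, tracking how equivalences propagate in the finite-dimensional Abelian base $T/T(V_D)$ and using that any potential identification of $u_m$ with $v$ would have to be witnessed by a $\Sym(M)$-computation staying inside $V_D^c$, which is absent when $m \in B$ and $D$ is chosen sufficiently large.
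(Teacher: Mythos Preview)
Your ``yes'' direction is fine, but the ``no'' direction has a genuine gap, and the paper's argument is organized quite differently for exactly this reason.

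The problem is with your finite quotient when $m\in B$ and the computation from $(1;m,0,0)$ is infinite (which is the generic case, since $M$ has unsolvable halting). Passing to $G/T(V_D)$ leaves the metabelian top $H_1^{H_2}\rtimes H_2$ intact and infinite, so you must impose $a_i^{N}=1$ (and the primed/tilded analogues). But these relations are not harmless for the base: conjugating $a_i^{N}$ by elements of $T/T(V_D)$ throws the commutators $x_u^{\,a_i^{N}-1}$ into the kernel, and from the explicit action (equation (\ref{ea})) one sees $a_i^{N}$ does \emph{not} act trivially on $T/T(V_D)$. So the normal closure of the exponent relations eats into the base in an uncontrolled way, and your assertion that ``any identification of $u_m$ with $v$ must be witnessed by a $\Sym(M)$-computation inside $V_D^c$'' is simply false once those relations are added. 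This is not a technicality: the whole content of Theorem~\ref{t:rfg1} is that residual finiteness of $G(M)$ requires $M$ to be sym-universally halting, and the discussion in \S\ref{ss:1} shows that for non-halting inputs of a general $M$ the corresponding element can die in every finite quotient. You are trying to produce a separating finite quotient precisely in the regime where the group is not known (and typically fails) to be residually finite.

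The paper avoids this by reversing the roles. It replaces $M$ by the \emph{clock machine} $M_n$ of Theorem~\ref{t:rho} (two extra glasses, hence the ${\mathcal Z}{\mathcal N}_5{\mathcal A}$ in the statement), and it modifies the semigroup relations, replacing $q_0=0$ by $q_iA_{K+1}A_{K+2}=0$. Then:
\begin{itemize}
\item for $\epsilon\in X$ (\emph{accepted}), the $\Sym(M_n)$-orbit of $(1;\epsilon,0,1,0)$ is finite, so the residual-finiteness argument of Theorem~\ref{t:rfg1} applies locally and yields a finite quotient in $\mathcal{A}_p^2{\mathcal A}\cap{\mathcal Z}{\mathcal N}_5{\mathcal A}$ in which $w(\epsilon)\ne 1$; the formula \emph{fails} there;
\item for $\epsilon\in X'$ (\emph{infinite computation}), the clock glass~$K+1$ grows without bound, and in any periodic quotient one has $x*a_{K+1}^{(D)}=x*a_{K+1}^{(2D)}$; combining this with the commands (\ref{c:a}) and the relation $q_iA_{K+1}A_{K+2}=0$ forces $w(\epsilon)=1$, so the formula \emph{holds} in every periodic group.
\end{itemize}
Thus the finite separating quotient is built on the side where the orbit is finite (accepted inputs), and periodicity is exploited on the side where the orbit is infinite. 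Your scheme attempts the opposite assignment, which is why the finite-quotient step cannot be completed.
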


\begin{proof}
It is well known \cite{Gurevich} that there exists a Turing machine for
which the set of input configurations accepted by the machine and the set of input configurations starting with
which the machine never stops are recursively inseparable. Let $M$ be a 2-glass Minsky machine with the same
property.

Consider the 4-glass Minsky machine $M_n$ described in the proof of Theorem \ref{t:rho}. Let $S'(M_n)$ be the
semigroup given by the same defining relations as $S(M_n)$ except the relation $q_0=0$ is substituted by the
relation $q_iA_{3}A_4=0$ for every $i$. It does not affect the proof of Theorem \ref{tmm}.

Let $G'(M_n)$ be the group corresponding to $S'(M_n)$ in the same way $G(M_n)$ corresponds to $S(M_n)$. Then
$G'(M_n)$ belongs to $\mathcal{A}_p^2{\mathcal A}\cap {\mathcal Z}{\mathcal N}_5{\mathcal A}$ and simulates $M_n$ as described
in Theorem \ref{tmm}. Let $R$ be the (finite) set of defining relations of $G'(M_n)$. Let $X$ be the set of
numbers $\epsilon$ such that $M_n$ accepts the configuration $(\epsilon,0,1,0)$. Let $X'$ be the set of numbers
$\epsilon$ such that $M_n$ works infinitely long starting with the configuration $(\epsilon,0,1,0)$. Then $X$ and
$X'$ are recursively inseparable by the choice of $M$ and $M_n$.  For any configuration $(\epsilon,0,1,0)$ of
$M_n$ consider the corresponding element $$w(\epsilon)=q_1*a_1^{(\epsilon)}*A_1*A_2*a_3*A_3*A_4.$$

Suppose $\epsilon\in X$. Then there are only finite number of computations of $\Sym(M_n)$ starting at the
configuration $(1;\epsilon,0,1,0)$. Then as in the proof of Theorem \ref{t:rhog}, there exists a finite
homomorphic image $H$ of $G'(M_n)$ where the image of $w(\epsilon)$ is not equal to 1. Hence the universal
formula $\& R\to w(\epsilon)=1$ does not hold in the finite group $H$ from $\mathcal{A}_p^2{\mathcal A}\cap {\mathcal
Z}{\mathcal N}_5{\mathcal A}$.

Now suppose that $\epsilon\not\in X$. Consider any periodic homomorphic image $H$ of $G'(M_n)$. Let $\bar t$ be
the image of $t\in G'(M_n)$ in $H$. Then there exists a number $D$ such that for every element $x\in \bar T$,
\begin{equation}\label{e456}
x*{\bar a}_3^{(D)}=x*{\bar a}_3^{(2D)}.
\end{equation}

Since $M_n$ works infinitely long starting at the configuration $(\epsilon,0,1,0)$, by Theorem \ref{tmm} the
following equality is true for some $i,k_1,k_2$:  $$w(\epsilon)=\bar x_{\vec 1, q_iA_0}*\bar a_1^{(k_1)}*\bar
a_2^{(k_2)}*\bar a_3^{(D)}*\bar A_1*\bar A_2*\bar A_3*\bar A_4.$$ Then by (\ref{e456}) and Theorem \ref{tmm}

$$\begin{array}{l}\bar w(\epsilon)=\bar x_{\vec 1, q_iA_0}*\bar a_1^{(k_1)}*\bar a_2^{(k_2)}*\bar a_3^{(2D)}*\bar
a_4^{(D)}*\bar A_1*\bar A_2*\bar A_3*\bar A_4\\ =
\bar x_{\vec 1, q_iA_0}*\bar a_1^{(k_1)}*\bar a_2^{(k_2)}*\bar a_3^{(D)}*\bar a_4^{(D)}*\bar A_1*\bar A_2*\bar
A_3*\bar A_4\\ =
\bar x_{\vec 1, q_iA_0}*\bar a_1^{(k_1)}*\bar a_2^{(k_2)}*\bar A_1*\bar A_2*\bar A_3*\bar A_4=1.\end{array}$$
since $q_iA_3A_4=0$ in $S'(M_n)$. Hence the universal formula $\& R\to w(\epsilon)=1$ holds in $H$.

Thus the set of universal formulas $\&R\to w(\epsilon)=1$ that do not hold in some finite group from
$\mathcal{A}_p^2{\mathcal A}\cap {\mathcal Z}{\mathcal N}_5{\mathcal A}$ and the set of such formulas which hold in every
periodic group are recursively inseparable.

\end{proof}

\begin{remark} Note that the universal theory of finite metabelian groups is decidable \cite{KSs}. The same is true for the set of finite groups (and any other algebraic structures of finite type) of any finitely based variety where every finitely generated group is residually finite \cite{KSs}. On the other hand, the universal theory of all finite nilpotent groups is undecidable \cite{Kh83}. The description of all (finitely based) varieties of groups where the universal theory of finite groups is decidable is currently out of reach. From Zelmanov's solution of the restricted Burnside problem \cite{Ze90, Ze91}, it immediately follows that the universal theory of finite groups in every finitely based periodic variety of groups is decidable. That result and simulations of Minsky machines in semigroups (as in Section \ref{s:2}) were used by the third author \cite{Sapir1} to obtain a complete description of all finitely based varieties of semigroups where finite semigroups have decidable universal theory.  For more information on that problem, see \cite{KSs}.
\end{remark}

\end{document}